\documentclass[12pt, reqno]{amsart}
\usepackage{lipsum}
\usepackage{a4wide}
\usepackage{tikz} 
\usepackage{amscd}
\usepackage{hyperref}
\newcommand{\sslash}{\mathbin{/\mkern-6mu/}}
\usepackage{amsmath, amssymb, amsthm}
\usepackage{stmaryrd}
\usepackage{enumitem}
\hypersetup{colorlinks,linkcolor={blue},citecolor={blue},urlcolor={red}}
\theoremstyle{plain}
\usepackage{comment}
\numberwithin{equation}{section}

\newcommand{\diag}{\operatorname{diag}}

\newtheorem{theorem}{Theorem}[section]
\newtheorem{corollary}[theorem]{Corollary} 
\newtheorem{lemma}[theorem]{Lemma}
\theoremstyle{definition}
\newtheorem{remark}{Remark}
\newtheorem{proposition}[theorem]{Proposition}

\newtheorem{definition}[theorem]{Definition}

\newtheorem{example}{Example}

\usepackage{parskip}
\newcommand\blfootnote[1]{%
  \begingroup
  \renewcommand\thefootnote{}\footnote{#1}%
  \addtocounter{footnote}{-1}%
  \endgroup
}

\usepackage[utf8]{inputenc}

\title{Geometric Invariant Theory of Peterson Varieties}

\author[A.~Ghosh]{Arkadev Ghosh}
\address{Arkadev Ghosh \\
Chennai Mathematical Institute, Plot H1, SIPCOT IT Park, Siruseri, Kelambakkam,
603103, India.
}
\email{arkadev@cmi.ac.in}

\author[S.~Pattanayak]{Santosha Pattanayak}
\address{Santosha Pattanayak\\
Department of Mathematics, Indian Institute of Technology Kanpur, U.P. India, 208016.
}
\email{santosha@iitk.ac.in}

\begin{document}

\maketitle

\begin{abstract}
We study the GIT quotients of the Peterson variety
\(\mathrm{Pet}_n\subset \mathrm{GL}(n,\mathbb C)/B\) under a one-parameter subgroup $\lambda:\mathbb G_m \to T$ with respect to the linearization $\mathcal L(\chi)$ given by a regular dominant character $\chi$ in the root lattice. Using the Richardson stratification, we describe the semistable and 
stable loci explicitly in terms of subsets of simple
roots. This determines the GIT chamber decomposition and the
corresponding wall-crossing morphisms. In the deep chamber, the
quotient is shown to be isomorphic to the weighted projective space
\(\mathbb P(1,2,\ldots,n-1)\). We obtain a complete chamber-theoretic characterization of normality and describe how normality varies with the choice of linearization. We also
prove that the quotient is smooth if and only if \(n\le3\),
independently of the regular dominant linearization. These results
describe how the singular geometry of the Peterson variety is
reflected in the variation of its GIT quotients.
\end{abstract}

\blfootnote{{\bf Keywords:} GIT quotients, variation
of GIT, Peterson Varieties, Normality, Smoothness.

{\bf Mathematics Subject Classification:} $14L24, 14L30, 14M15, 14E05·$}

\section{Introduction}

The Peterson variety $\mathrm{Pet}_n$ occupies a central position at the intersection of geometry, representation theory, and combinatorics. In Lie type $A$, it is defined as the closed subvariety of the full flag variety $\mathrm{Fl}_n$ consisting of flags 
\[
0 = V_0 \subset V_1 \subset \cdots \subset V_n = \mathbb{C}^n
\]
such that $N V_i \subset V_{i+1}$ for each $1 \leq i \leq n-1$, where $N$ is a fixed regular nilpotent matrix (typically represented as a single principal Jordan block of size $n$).

The foundational importance of $\mathrm{Pet}_n$ stems from Peterson's remarkable unpublished observation that the quantum cohomology ring of the flag variety $\mathrm{GL}(n,\mathbb{C})/B$ is isomorphic to the coordinate ring of the intersection of the Peterson variety with the open opposite Schubert cell. This result was subsequently proved by Kostant \cite{Kos} for the full flag variety and extended by Rietsch \cite{Riet} to partial flag varieties, thereby establishing the Peterson variety as a fundamental bridge between classical Schubert calculus and quantum cohomology. As a regular nilpotent Hessenberg variety, $\mathrm{Pet}_n$ also occupies a central position within the theory of Hessenberg varieties, introduced by De Mari, Procesi, and Shayman (\cite{MPS}), and has deep connections with geometric representation theory, the Toda lattice, the affine Grassmannian, Springer theory, and the combinatorics of the Bruhat order (see, for example, \cite{Bal, Kos, Riet}). Peterson varieties may also be realized as a flat degeneration of certain regular semisimple Hessenberg varieties such as the permutohedral variety (see \cite{AFZ}).

Despite its central role in geometry and representation theory, the Peterson variety exhibits remarkably rich and subtle singular behavior. Although $\mathrm{Pet}_n$ is irreducible and of comparatively simple dimension, it is singular for every $n\ge 3$, and is known to be non-normal for $n\ge 4$. A major advance in understanding its local geometry was achieved by Insko and Yong through their introduction of patch ideals, which provide explicit local equations for neighborhoods of points in $\mathrm{Pet}_n$. Using this approach, they proved that $\mathrm{Pet}_n$ is a local complete intersection and therefore Cohen-Macaulay and Gorenstein, and they obtained a complete combinatorial characterization of its singular locus (see \cite{Insko}).

Alongside these developments, the topology, Schubert calculus, and equivariant geometry of Peterson varieties have been studied extensively. Tymoczko (\cite{Tymoczko}) proved that the Peterson variety admits a paving by affine cells, providing important insight into its topology. Subsequently, Harada, Horiguchi, and Masuda (\cite{HHM}) obtained explicit presentations of the equivariant cohomology rings of Peterson varieties in arbitrary Lie type. Further advances include equivariant Chevalley, Giambelli, and Monk-type formulae for Peterson Schubert classes, together with remarkable positivity phenomena that mirror many classical features of Schubert calculus while reflecting the distinctive geometry of Peterson varieties \cite{D, GMS, HT}.

A fundamental aspect of the geometry of the Peterson variety is its symmetry under torus actions. Unlike the full flag variety $G/B$, which is naturally acted upon by a maximal torus $T$, the Peterson variety is preserved by a distinguished one-dimensional subtorus $ \mathbb{G}_m \subset T$. For $G=\mathrm{GL}(n,\mathbb{C})$, this action is induced by the cocharacter
\[
\lambda(t)=\operatorname{diag}(t,t^2,\ldots,t^n).
\]
This torus action places the Peterson variety naturally within the framework of Geometric Invariant Theory, where one studies quotients of algebraic varieties by reductive group actions via linearized line bundles. In this setting, it is natural to ask how the stable and semistable loci depend on the choice of linearization, how the resulting GIT quotients vary across chambers in character space, and what geometry these quotients acquire. GIT quotients by one-dimensional tori have been studied in related
settings; in particular, \cite{BKP} investigates such quotients of
Grassmannians, including their semistable loci and geometric
properties.

The questions of normality and smoothness are particularly interesting
when the variety being quotiented is itself singular. Indeed, the
geometry of a GIT quotient need not reflect the singularities of the
original variety: examples in the setting of torus quotients of
Schubert varieties show that a singular Schubert variety can have a
smooth GIT quotient \cite{BP}. This phenomenon is especially relevant
for the Peterson variety, which is singular for \(n\ge3\) and
non-normal for \(n\ge4\). The stability condition may remove parts of
the singular or non-normal locus, while nontrivial stabilizers may
introduce new singularities in the quotient. Thus normality and
smoothness of the Peterson GIT quotient are genuine geometric
questions rather than formal consequences of the corresponding
properties of \(\mathrm{Pet}_n\).

In this paper, we undertake such a systematic study. Our main results establish precise criteria for the geometry of the GIT quotient
\[
X_\chi:= (\mathrm{Pet}_n)^{ss}_{G_0}(\mathcal L(\chi))
\sslash G_0,
\] where $G_0=\lambda(\mathbb G_m)$ and the linearization \(\mathcal L(\chi)\) is given by a 
regular dominant character \(\chi\) in the root lattice.

Our approach is built upon the Richardson stratification of \(\mathrm{Pet}_n\), which decomposes the variety into disjoint unions of strata
\[
Y^0_{I,J} := (B^-w_{J}B/B) \cap (Bw_{I}B/B) \cap \mathrm{Pet}_n,
\]
indexed by pairs of subsets \(J \subseteq I \subseteq S\), where \(S\) is the set of simple roots of type \(A_{n-1}\). We give an explicit description of the semistable and stable loci as disjoint unions of these strata. This determines the GIT chamber decomposition of the regular dominant cone, with wall-crossing phenomena governed by the vanishing of the Hilbert-Mumford weights at the \(\lambda\)-fixed points.

A particularly simple region of the character space is the
\emph{deep chamber}. Put
\[
K_r=S\setminus\{\alpha_r\},
\qquad 1\le r\le n-1,
\]
and define
\[
\mathcal C^\lambda_{\mathrm{deep}}
=
\left\{
\chi\in\mathcal D:
\langle w_{K_r}(\chi),\lambda\rangle<0
\text{ for }1\le r\le n-1
\right\},
\]
where \(\mathcal D\) is the regular dominant cone. Equivalently, in
this chamber
\[
\langle w_I(\chi),\lambda\rangle<0
\qquad
\text{for every }I\subsetneq S.
\]
Our first main result identifies the corresponding quotient
explicitly.

\medskip
\noindent\textbf{Theorem A} (see Theorem~ \ref{thm:deep-quotient}).
\emph{Let \(\chi\) be a regular dominant root-lattice character in
\(\mathcal C^\lambda_{\mathrm{deep}}\). Then
\[
(\mathrm{Pet}_n)^{ss}_{G_0}(\mathcal L(\chi))
=
(\mathrm{Pet}_n)^s_{G_0}(\mathcal L(\chi)),
\]
and
\[
(\mathrm{Pet}_n)^{ss}_{G_0}(\mathcal L(\chi))
\sslash G_0
\cong
\mathbb P(1,2,\ldots,n-1).
\]}
Thus the weighted projective
space \(\mathbb P(1,2,\ldots,n-1)\) provides a distinguished model
among all Peterson GIT quotients.

The same numerical description allows us to study variation of GIT.
The walls are the hyperplanes
\[
H_I
=
\left\{
\chi\in\mathcal D:
\langle w_I(\chi),\lambda\rangle=0
\right\}.
\]
Linearizations in the same chamber have the same semistable locus and
hence the same quotient, while crossing a wall gives the usual
projective birational VGIT morphisms. The stable locus is nonempty for every regular dominant
root-lattice character, and every quotient has dimension \(n-2\).
The VGIT comparison morphisms show that every quotient is birational to the deep-chamber quotient; in particular,
all of these quotients are rational. 

Our second main result concerns normality. Although
\(\mathrm{Pet}_n\) itself is non-normal in higher rank, the quotient
can still be normal. We first determine the normal locus of
\(\mathrm{Pet}_n\) explicitly. The interaction between this locus and the Hilbert-Mumford
inequalities leads to a complete criterion, with no genericity or
nonwall assumption on the linearization.

\noindent\textbf{Theorem B} (see Theorem~\ref{thm:normality-criterion}). 
\emph{Assume \(n\ge4\), and let \(\chi\) be a regular dominant
root-lattice character. Then
\[
(\mathrm{Pet}_n)^{ss}_{G_0}(\mathcal L(\chi))
\sslash G_0
\text{ is normal}
\quad\Longleftrightarrow\quad
\langle w_{K_r}(\chi),\lambda\rangle<0
\quad
(2\le r\le n-2).
\]}

Equivalently, the quotient is normal precisely when the semistable
locus is contained in the normal locus of \(\mathrm{Pet}_n\); see
Theorem~\ref{thm:normality-criterion} for details. In particular, the criterion
holds for all regular dominant root-lattice linearizations, including
those lying on GIT walls. The proof also treats wall linearizations. Using Toeplitz
coordinates on the Peterson big cell, we give an elementary
description of the maximal-parabolic strata \(Y^0_{S,K_r}\);
their \(G_0\)-orbits are indexed by cyclic classes of
\(r\)-element subsets of the \(n\)-th roots of unity. 
This auxiliary description is also useful for understanding the
local VGIT behavior along a generic wall. In particular, the
wall-crossing morphism from the adjacent normal chamber gives the
normalization in the generic single-wall case, and the above necklace
number records the cardinality of the fiber over the corresponding
fixed-point quotient.

As a consequence of Theorem~B, every Peterson GIT quotient is normal
for \(n\le5\), whereas for \(n\ge6\) the normality of the quotient
genuinely depends on the linearization.

Our third main result shows a striking contrast between normality and
smoothness. Whereas normality varies with the chamber, smoothness
depends only on \(n\).

\noindent\textbf{Theorem C} (see Theorem~\ref{thm:smoothness}). 
\emph{Let \(\chi\) be any regular dominant root-lattice character.
Then
\[
(\mathrm{Pet}_n)^{ss}_{G_0}(\mathcal L(\chi))
\sslash G_0
\]
is smooth if and only if $n\le3.$}

For \(n\ge4\), the proof exhibits stable points with nontrivial
finite stabilizer and applies Luna's \'etale slice theorem. Thus, unlike
normality, changing the chamber cannot remove the singularities of
the quotient.

Finally, we examine some consequences and examples of the chamber
structure. The deep-chamber quotient provides a common birational
model for all Peterson GIT quotients. We conclude with an explicit \(\mathrm{GL}(4,\mathbb C)\)-example showing that
wall crossing can preserve normality while changing the birational
geometry and destroying the weighted-projective-space description. This illustrates that the
variation of these quotients contains geometric information not
visible from normality or smoothness alone.

The paper is organized as follows. In Section~2 we fix the
root-theoretic notation, recall the Peterson-cell decomposition and
the distinguished one-dimensional torus action, and record the
Hilbert-Mumford formulas used throughout. In Section~3 we introduce
the Richardson strata, give the Bia{\l}ynicki-Birula decompositions, determine the stable, semistable, and
polystable loci, describe the GIT wall-and-chamber decomposition, and
identify the deep-chamber quotient, proving Theorem~A. In Section~4 we
determine the normal locus of the Peterson variety, analyze the
maximal-parabolic wall strata, and prove the normality criterion of
Theorem~B, including the normalization result for a generic single
wall. Section~5 proves the chamber-independent smoothness criterion of
Theorem~C. Finally, Section~6 presents an explicit wall crossing for
\(\mathrm{Pet}_4\) and records further properties of the
deep-chamber weighted projective quotient.

\section{Notation and Preliminaries}

We fix the notation and recall the basic facts about the Peterson
variety, its distinguished one-dimensional torus action, and
Geometric Invariant Theory that will be used throughout the paper.
For standard background on algebraic groups and root systems, we
refer to \cite{Hum,Hum2,Spr}.
Let $G = \mathrm{GL}(n,\mathbb{C})$ be the general linear group. Denote by $B$ the Borel subgroup of upper triangular matrices, and by $T$ the maximal torus of diagonal matrices in \(\mathrm{GL}(n,\mathbb{C})\). The flag variety $\mathrm{Flags}(\mathbb{C}^n)$ is identified with $G/B$.

Denote by \(X(T)\) the character group of \(T\) and by \(Y(T)\) the cocharacter group. Set
\[
E_1 = X(T) \otimes \mathbb{R}, \qquad E_2 = Y(T) \otimes \mathbb{R},
\]
with the canonical pairing \(\langle \cdot,\cdot \rangle : E_1 \times E_2 \to \mathbb{R}\). Let $R = \{\epsilon_i - \epsilon_j : 1 \le i, j \le n,\ i \neq j\}$ be the set of roots with respect to $T$, and let
\[
R^+ = \{\epsilon_i - \epsilon_j : 1 \le i < j \le n\}
\]
be the set of positive roots with respect to $(T,B)$, where \(\epsilon_i\) denotes the \(i\)-th coordinate character of \(T\).
Let $S = \{\alpha_i := \epsilon_i - \epsilon_{i+1} : i = 1,\dots,n-1\} \subset R^+$ be the set of simple roots.
The Weyl group $W$ is the symmetric group $S_n$, acting by permutation of coordinates. For $\beta=\sum_{i}c_{i}\alpha_{i}\in R^{+}$ we write $\mathrm{ht}(\beta)=\sum_{i}c_{i}$ for the
height of $\beta$.

Let \[ Q:=\mathbb ZS\subset X(T) \] be the root lattice and set \[ Q_{\mathbb R}:=Q\otimes_{\mathbb Z}\mathbb R = \left\{ \sum_{i=1}^n x_i\epsilon_i: \sum_{i=1}^n x_i=0 \right\}. \] For \(1\le j\le n-1\), let \[ \varpi_j = \epsilon_1+\cdots+\epsilon_j -\frac{j}{n}\sum_{i=1}^n\epsilon_i \in Q_{\mathbb R} \] be the \(j\)-th fundamental weight of the root system \(A_{n-1}\). Thus $ \langle\varpi_i,\alpha_j^\vee\rangle=\delta_{ij},$ where $\check{\alpha}_j(t) = \operatorname{diag}(1,\dots,1,t,t^{-1},1,\dots,1)$ is the simple coroot (with $t$ in the $j$-th position). Every regular dominant element of \(Q_{\mathbb R}\) has a unique expression \[ \chi=\sum_{j=1}^{n-1}m_j\varpi_j, \qquad m_j>0, \] and for a root-lattice character \(\chi\in Q\) one has $m_j=\langle\chi,\alpha_j^\vee\rangle\in\mathbb Z_{>0}.$


Define the dominant chamber in \(E_2\):
\[
\overline{C(B)} = \{ \nu \in E_2 \mid \langle \alpha, \nu \rangle \ge 0 \text{ for all } \alpha \in R^+ \}.
\]

For each root \(\alpha \in R\), there is an injective homomorphism \(\phi_\alpha : \mathrm{SL}_2(\mathbb{C}) \to \mathrm{GL}(n,\mathbb{C})\) such that
\[
\check{\alpha}(t) = \phi_\alpha\!\left( \begin{pmatrix} t & 0 \\ 0 & t^{-1} \end{pmatrix} \right).
\]
The simple reflection \(s_i = s_{\alpha_i}\) acts on characters by
\[
s_i(\chi) = \chi - \langle \chi, \check{\alpha}_i \rangle \alpha_i.
\]

Let \(U\) be the unipotent radical of the Borel subgroup \(B\). For each root \(\alpha \in R\), denote by \(U_\alpha\) the one‑dimensional \(T\)-stable root subgroup of \(G\) corresponding to \(\alpha\). There is an isomorphism \(u_\alpha : \mathbb{C} \to U_\alpha\) satisfying
\[
t \, u_\alpha(a) \, t^{-1} = u_\alpha( \alpha(t) a ) \qquad \text{for all } t \in T,\ a \in \mathbb{C}.
\]

There is a partial order on \(X(T)\) defined by \(\psi \le \chi\) if and only if \(\chi - \psi\) is a non‑negative integral linear combination of the simple roots.

For a subset $I \subseteq S$, let $W_I$ be the parabolic subgroup generated by the simple reflections $s_{\alpha_i}$ with $\alpha_i \in I$. The longest element of $W_I$ is denoted by $w_{I}$. Explicitly, if $I$ corresponds to a decomposition $n = i_1 + \dots + i_k$ into block sizes, then $w_{I}$ is the permutation
\[
w_{I} = (i_1,i_1-1,\dots,1,\; i_1+i_2,\dots,i_1+1,\; \dots,\; n,\dots,i_1+\cdots+i_{k-1}+1).
\] The Bruhat order on these elements satisfies
$$
w_J \leq w_{J'} \quad \text{if and only if} \quad J \subseteq J'.
$$

\subsection{The Peterson variety}
The complete flag variety $Fl_n = G / B$
parameterizes complete flags in $\mathbb{C}^n$; that is, nested sequences of subspaces
$$
0 \subset V_1 \subset V_2 \subset \cdots \subset V_{n-1} \subset V_n = \mathbb{C}^n
$$
with $\dim V_i = i$ for each $i = 1, \dots, n$. We next recall the realization of the type \(A\) Peterson variety as a
regular nilpotent Hessenberg variety. 

Let $N \in \mathfrak{gl}(n,\mathbb{C})$ be a \emph{regular nilpotent matrix}, viewed as a linear map  
$N : \mathbb{C}^n \to \mathbb{C}^n$. By definition, $N$ satisfies
$$
N^k \neq 0 \quad \text{for } 1 \le k \le n-1, 
\quad \text{and} \quad N^{n} = 0.
$$
Up to a change of basis, we may take $N$ to be the \emph{standard Jordan block} with 1’s along the superdiagonal and 0’s elsewhere:
$$
N = 
\begin{pmatrix}
0 & 1 & 0 & \cdots & 0 \\
0 & 0 & 1 & \cdots & 0 \\
\vdots & \vdots & \ddots & \ddots & \vdots \\
0 & 0 & \cdots & 0 & 1 \\
0 & 0 & \cdots & 0 & 0
\end{pmatrix}.
$$
This choice ensures that $N$ is regular, i.e., its Jordan form consists of a single block of size $n$. We fix this regular nilpotent operator throughout this paper.

The \emph{Peterson variety}, denoted by $\mathrm{Pet}_n$, is a closed subvariety of the flag variety $Fl_n$ defined by
$$
\mathrm{Pet}_n := \{ (V_i) \in Fl_n \mid N V_i \subseteq V_{i+1} \text{ for all } 1 \le i \le n-1 \}.
$$
Thus, a flag $(V_1, \dots, V_n)$ lies in $\mathrm{Pet}_n$ if and only if the action of the nilpotent operator $N$ maps each subspace $V_i$ into the $V_{i+1}$ in the flag.

Equivalently, $\mathrm{Pet}_n$ can be described as a \emph{Hessenberg variety} associated with the Hessenberg function
$$
h(i) = i + 1, \quad 1 \le i \le n-1 \,\, and \,\, h(n)=n.
$$
We recall that for a linear operator $X \in \mathfrak{gl}(n,\mathbb{C})$ and a Hessenberg function 
$h : \{1, \dots, n\} \to \{1, \dots, n\}$, the corresponding Hessenberg variety is
$$
Hess(X, h) = \{ (V_i) \in Fl_n \mid X V_i \subseteq V_{h(i)} \}.
$$
Therefore,
$$
\mathrm{Pet}_n = Hess(N, h), \quad \text{where } h(i) = i+1.
$$

There is also a description of $\mathrm{Pet}_n$ in terms of the Lie algebra $\mathfrak{g} = \mathfrak{gl}(n,\mathbb{C})$.  
Let $\mathfrak{b}$ denote the Lie algebra of $B$, and for each simple root $\alpha_j$, let $\mathfrak{g}_{-\alpha_j}$ denote the corresponding negative root space.

Define
$$
H_{[n-1]} := \mathfrak{b} \oplus \bigoplus_{j \in [n-1]} \mathfrak{g}_{-\alpha_j}.
$$
Then the Peterson variety can be described as the following subvariety of $G / B$:
$$
\mathrm{Pet}_n = \{ gB \in G/B \mid g^{-1} N g \in H_{[n-1]} \}.
$$
In this formulation, $\mathrm{Pet}_n$ consists of those flags $gB$ such that when $N$ is conjugated by $g^{-1}$, the result lies in the specified subspace $H_{[n-1]}$ of the Lie algebra.

The Peterson variety $\mathrm{Pet}_n$ is a \emph{projective}, \emph{irreducible} algebraic variety.  
Its dimension is
$$
\dim(Pet_n) = n - 1.
$$

\subsection{The distinguished one-dimensional torus action}
The natural action of $G$ on the flag variety $Fl_n=G/B$ restricts to an action of $T$, and the set of $T$-fixed points in $Fl_n$ is in bijection with $S_n$, where each permutation $w \in S_n$ corresponds to the fixed point $wB \in G/B$.

Note that the condition $NV_i \subseteq V_{i+1}$ is not preserved by the $T$ action on the flag variety and as a consequence, the Peterson variety is not preserved by the full maximal torus.
It is, however, preserved by a distinguished one-dimensional subtorus.  We fix this action and the
associated cocharacter, which we will use throughout the paper. Let \[ \lambda:\mathbb G_m\longrightarrow T \subset G, \qquad \lambda(t)=\diag(t,t^2,\ldots,t^n), \] be the one-parameter subgroup of $G$ and set $G_0:=\lambda(\mathbb G_m).$ For every simple root $\alpha_i$, we have $\langle\alpha_i,\lambda\rangle=-1,$ and therefore \[ \langle\beta,\lambda\rangle=-\operatorname{ht}(\beta) \qquad(\beta\in R^+). \] In particular, \(-\lambda\) is strictly dominant. For the standard regular nilpotent $N=\sum_{i=1}^{n-1}E_{i,i+1},$ we have \[ \lambda(t)N\lambda(t)^{-1}=t^{-1}N. \] Hence the action of \(G_0\) on \(G/B\) preserves $\mathrm{Pet}_n.$ The \(G_0\)-fixed points of \(\mathrm{Pet}_n\) are \[ p_I:=w_IB/B, \qquad I\subseteq S. \] 

\subsection{Peterson cells and their affine paving}\label{affine-paving} The Schubert-cell decomposition of \(G/B\) induces an affine paving of
the Peterson variety. We recall this paving and the corresponding
closure relations.
For \(w\in W\), let
\[
C_w:=BwB/B,
\qquad
C^w:=B^-wB/B
\]
denote the Schubert and opposite Schubert cells in \(G/B\), respectively,
and let $\overline{C_w}$ and 
$\overline{C^w}$
denote the corresponding Schubert and opposite Schubert varieties, respectively. 

A Schubert cell \(C_w\) meets \(\mathrm{Pet}_n\) nontrivially precisely
when \(w=w_I\) for some subset \(I\subseteq S\); the same statement holds
for the opposite Schubert cells \(C^w\); see
\cite[Lemma~3.5]{Abe}. Accordingly, for \(I\subseteq S\), define the
\emph{Peterson cell}
\[
X_I^0
:=
C_{w_I}\cap\mathrm{Pet}_n
=
(Bw_IB/B)\cap\mathrm{Pet}_n,
\]
and its closure
\[
X_I:=\overline{X_I^0}.
\]
Similarly, define the opposite Peterson cell
\[
\Omega_I^0
:=
C^{w_I}\cap\mathrm{Pet}_n
=
(B^-w_IB/B)\cap\mathrm{Pet}_n,
\]
and set $\Omega_I:=\overline{\Omega_I^0}.$ Since $G/B=\bigsqcup_{w\in W}C_w,$
the above nonemptiness criterion gives the disjoint decomposition
\[
\mathrm{Pet}_n
=
\bigsqcup_{I\subseteq S}X_I^0.
\]
Tymoczko's affine paving of regular nilpotent Hessenberg varieties
\cite{Tymoczko1} implies that each Peterson cell is an affine space:
\[
X_I^0\simeq\mathbb A^{|I|}.
\]
In particular, $\dim X_I^0=|I|$ and $\dim\mathrm{Pet}_n=n-1,$
the latter being attained for \(I=S\).

The closures of the Peterson cells are compatible with inclusion of the
indexing subsets. More precisely,
\begin{equation}\label{eq:peterson-cell-closure}
X_I
=
\overline{X_I^0}
=
\bigsqcup_{J\subseteq I}X_J^0;
\end{equation}
see \cite[Equation~(3.7)]{Abe}. Consequently, $X_J\subseteq X_I$ if and only if $J\subseteq I.$
Equivalently, the Bruhat order on the parabolic longest elements satisfies
\[
w_J\le w_I
\qquad\Longleftrightarrow\qquad
J\subseteq I.
\]

We shall also use the corresponding Peterson--Schubert and opposite
Peterson--Schubert intersections. They satisfy
\[
X_I\cap\Omega_J\neq\varnothing
\qquad\Longleftrightarrow\qquad
J\subseteq I,
\]
and, in particular,
\[
X_I\cap\Omega_I=\{p_I\},
\qquad
p_I:=w_IB/B.
\]
These cells and their intersections will provide the geometric
stratification used below to compute the Hilbert--Mumford numerical
function and hence the stable and semistable loci.

\subsection{Geometric Invariant Theory}
We begin by recalling some basic results from Geometric Invariant Theory (GIT); the standard reference is \cite{GIT}. Let $X$ be a projective variety equipped with an ample line bundle $\mathcal L$, and let $G$ be a reductive group acting on the polarized pair $(X,\mathcal L)$. The general theory of GIT shows that the quotient is again a projective variety. The geometric meaning of the quotient is reflected in the notions of semistability, stability, and polystability of orbits.

\begin{definition}
Let $G$ be a reductive algebraic group acting on a projective variety $X$, and let
$\mathcal L$ be a $G$-linearized ample line bundle on $X$.

\begin{enumerate}
    \item A point $x\in X$ is called \emph{semistable} (with respect to $\mathcal L$) if there exists
    a $G$-invariant section $s\in H^0(X,\mathcal L^{\otimes m})^G$ for some $m>0$ such that
    $s(x)\neq 0$. The set of semistable points is denoted by $X_G^{ss}(\mathcal L)$.

    \item A point $x\in X_G^{ss}(\mathcal L)$ is called \emph{stable} if its $G$-orbit is closed in
    $X_G^{ss}(\mathcal L)$ and its stabilizer in $G$ is finite. The set of stable points is denoted by
    $X_G^s(\mathcal L)$.

    \item A point $x\in X_G^{ss}(\mathcal L)$ is called \emph{polystable} if its $G$-orbit is closed in
    $X_G^{ss}(\mathcal L)$. Equivalently, $x$ is polystable if it represents a closed orbit in the GIT
    quotient. The set of polystable points is denoted by $X_G^{ps}(\mathcal L)$.

    \item A point $x \in X$ is called unstable with respect to $\mathcal L$ if $x$ is not semistable.
\end{enumerate}
\end{definition}

While stability implies polystability, and polystability implies semistability, it is specifically the polystable orbits that are parameterized by the GIT quotient. This is because every semistable point in the variety contains a unique polystable orbit within the closure of its $G$-orbit relative to the semistable locus.  As we see in the following theorem, the semistable locus is an open subset of 
$X$ over which a good quotient exists.

\begin{theorem}[Mumford \cite{GIT}]
Let $G$ be a reductive group acting on a projective variety $X$, and let
$\mathcal L$ be a $G$-linearized ample line bundle on $X$. Then the semistable
locus $X_G^{ss}(\mathcal L)$ admits a good quotient
\[
\pi : X_G^{ss}(\mathcal L)\longrightarrow X_G^{ss}(\mathcal L)//G.
\]
Moreover, there exists an open subset
\[
U \subseteq X_G^{ss}(\mathcal L)//G
\]
such that
\[
X_G^s(\mathcal L)=\pi^{-1}(U),
\]
and the restriction
\[
\pi|_{X_G^s(\mathcal L)} : X_G^s(\mathcal L)\longrightarrow U
\]
is a geometric quotient.

In addition, the quotient is projective, and there is a canonical isomorphism
\[
X_G^{ss}(\mathcal L)//G \;\cong\; \operatorname{Proj}(R^G),
\]
where
\[
R=\bigoplus_{k\ge 0} H^0\!\left(X,\mathcal L^{\otimes k}\right)
\]
is the section ring of $\mathcal L$ and $R^G$ denotes its subring of $G$-invariant
elements.
\end{theorem}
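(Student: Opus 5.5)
The plan is to reduce the statement to the affine case by covering the semistable locus with invariant affine opens, and then to glue the affine quotients. First I will replace $\mathcal L$ by a sufficiently high tensor power $\mathcal L^{\otimes m}$. This changes neither the semistable or stable loci nor $\operatorname{Proj}$ of the invariant ring, up to the Veronese isomorphism. After this replacement $\mathcal L$ is very ample, and the $G$-linearization makes $V=H^0(X,\mathcal L)^\ast$ a finite-dimensional rational $G$-module, so that $X\hookrightarrow\mathbb P(V)$ is $G$-equivariant. Hence $R=\bigoplus_k H^0(X,\mathcal L^{\otimes k})$ is a finitely generated graded $\mathbb C$-algebra with a locally finite rational $G$-action. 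Because $G$ is reductive (we are in characteristic zero), it is linearly reductive and has a Reynolds operator $R\to R^G$. Hilbert's finiteness argument then shows that $R^G$ is finitely generated, and $R^G_0=\mathbb C$ because $X$ is projective and connected. Consequently $Y:=\operatorname{Proj}(R^G)$ is a projective variety.

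Next, by the definition of semistability, $X^{ss}_G(\mathcal L)=\bigcup_f X_f$, where $f$ ranges over homogeneous invariants of positive degree. Each $X_f$ is a $G$-stable affine open set with coordinate ring $R_{(f)}$, and we have $(R_{(f)})^G=(R^G)_{(f)}$, since the Reynolds operator commutes with localization at an invariant. The key affine step is that for an affine $G$-variety $Z=\operatorname{Spec}A$ with $G$ reductive, the map $Z\to\operatorname{Spec}A^G$ is a good quotient. Its image is surjective, it is $G$-invariant and affine, and $\mathcal O_{Z/\!/G}=(\pi_\ast\mathcal O_Z)^G$. The crucial extra property is that disjoint closed $G$-stable subsets $Z_1,Z_2$ have disjoint images. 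To see this, choose $a\in A$ with $a|_{Z_1}=0$ and $a|_{Z_2}=1$, and apply the Reynolds operator. The maps $X_f\to\operatorname{Spec}(R^G)_{(f)}=D_+(f)\subset Y$ are compatible on the overlaps $X_{fg}$, so they glue to a good quotient $\pi:X^{ss}_G(\mathcal L)\to Y$, which gives the isomorphism with $\operatorname{Proj}(R^G)$.

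For the stable locus, let $U\subset Y$ be the set of points $y$ such that $\pi^{-1}(y)$ is a single orbit with finite stabilizers. Good quotients identify points exactly when their orbit closures meet. Therefore $X^s_G(\mathcal L)=\pi^{-1}(U)$, and over $U$ the fibres are orbits, so $\pi$ restricted to $X^s_G(\mathcal L)$ is a geometric quotient once $U$ is known to be open. The main obstacle is proving that openness. I would argue on each affine chart $Z=X_f$. The function $x\mapsto\dim G_x$ is upper semicontinuous, so $Z_0=\{x:\dim G_x=0\}$ is open. Let $Z_1=Z\setminus Z_0$, which is closed and $G$-stable. Then $\pi(Z_1)$ is closed, since good quotients map closed invariant sets to closed sets. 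I expect $U\cap D_+(f)$ to be exactly $D_+(f)\setminus\pi(Z_1)$. The point is that if the fibre over $y$ avoids $Z_1$, then every orbit in the fibre has dimension $\dim G$. The unique closed orbit lies in the closure of every other orbit of the fibre and has strictly smaller dimension than any non-closed orbit there, so no non-closed orbit can exist. Hence the fibre is a single closed orbit with finite stabilizer. Intersecting over the charts gives that $U$ is open, which completes the proof.
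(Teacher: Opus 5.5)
The paper does not prove this statement; it quotes it from Mumford's GIT. Your outline is the standard proof from that source and is correct: cover $X^{ss}_G(\mathcal L)$ by the invariant affine opens $X_f$, use the Reynolds operator for finite generation of $R^G$, for surjectivity and for separation of closed invariant sets, glue the affine quotients $\operatorname{Spec}(R^G)_{(f)}=D_+(f)$ into $\operatorname{Proj}(R^G)$, and get openness of $U$ from upper semicontinuity of $x\mapsto\dim G_x$ together with the orbit-dimension argument. Two points are cosmetic only: finite generation of $R$ comes from ampleness of $\mathcal L$, not from the equivariant embedding $X\hookrightarrow\mathbb P(V)$, and $U$ is open because the open sets $U\cap D_+(f)=D_+(f)\setminus\pi(Z_1)$ cover it (a union over charts, not an intersection).
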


So the GIT quotient $X_G^{ss}(\mathcal L)//G$ exists as a projective variety and
parametrizes the closed orbits in the semistable locus, i.e.\ the
polystable orbits. In particular, the invariant section ring
\[
\bigoplus_{k\ge 0} H^0\!\left(X, \mathcal L^{\otimes k}\right)^G
\]
is finitely generated. Moreover, the GIT quotient preserves the usual separation properties expected of a projective variety. Unlike an ordinary topological quotient, which may
be non-Hausdorff, the GIT quotient identifies semistable points whose
orbit closures meet inside $X_G^{ss}(\mathcal L)$. Two semistable points
$x,y \in X_G^{ss}(\mathcal L)$ are said to be S-equivalent if the closures of their
$G$-orbits intersect in $X_G^{ss}(\mathcal L)$, equivalently if they have the same
associated closed (polystable) orbit.

Locally, GIT quotients admit an affine description. Let $x \in X$ be a semistable point, so that there exists a $G$-invariant section 
\[
s \in H^0(X, \mathcal L^{\otimes k})^G
\]
for some $k > 0$ with $s(x) \neq 0$. Since $\mathcal L$ is ample, the open subset
\[
X_s := X \setminus V(s)
\]
is affine. The corresponding affine GIT quotient is given by
\[
X_s /\!/ G \;:=\; \operatorname{Spec}\big( \mathbb{C}[X_s]^G \big).
\]
These affine quotients form an open cover of the global GIT quotient \[ X_G^{ss}(\mathcal L)\sslash G. \] For a fixed \(G\)-invariant affine open subset \(U\subseteq X\), the affine quotient \[ U\sslash G=\operatorname{Spec}\mathbb C[U]^G \] depends only on the \(G\)-action on \(U\). The collection of invariant affine opens occurring in the GIT construction, and hence the global semistable locus and quotient, may nevertheless depend on the chosen linearization.

\subsection{} For \(\chi\in X(T)\), let \(\mathbb C_\chi\) denote the corresponding one-dimensional \(B\)-module and set \[ \mathcal L(\chi) = G\times^B\mathbb C_{-\chi}\longrightarrow G/B. \] With this convention, \[ H^0(G/B,\mathcal L(\chi))\simeq V(\chi)^* \] for dominant \(\chi\), and \(\mathcal L(\chi)\) is ample when \(\chi\) is regular dominant. We use the same notation for its restriction to \(\mathrm{Pet}_n\). The natural \(G\)-linearization of \(\mathcal L(\chi)\) restricts to a \(G_0=\lambda(\mathbb G_m)\)-linearization. Unless explicitly stated otherwise, all stability notions below refer to this restricted linearization.

The action of \(G_0\) on \(\mathrm{Pet}_n\subseteq G/B\) is the
restriction of the natural left action of \(G\) on \(G/B\).

For a regular dominant root-lattice character \(\chi\), we denote the
semistable, stable, and polystable loci with respect to the induced
\(G_0\)-linearization on \(\mathcal L(\chi)\) by
\[
(\mathrm{Pet}_n)^{ss}_{G_0}(\mathcal L(\chi)),
\qquad
(\mathrm{Pet}_n)^{s}_{G_0}(\mathcal L(\chi)),
\qquad
(\mathrm{Pet}_n)^{ps}_{G_0}(\mathcal L(\chi)),
\]
respectively. The associated GIT quotient will be denoted by
\[
(\mathrm{Pet}_n)^{ss}_{G_0}(\mathcal L(\chi))
\sslash G_0.
\]
Unless otherwise stated, all notions of stability and all GIT quotients
below are taken with respect to this \(G_0\)-linearization.

\subsection{The Hilbert-Mumford Criterion}

Let $H$ be a reductive algebraic group acting morphically on a projective variety $X$, and let $\mathcal{L}$ be an $H$-linearized very ample line bundle on $X$.

To evaluate stability numerically, let $x \in \mathbb{P}(H^0(X, \mathcal{L})^*)$ and let $\hat{x}$ be a representative point in the affine cone $\hat{X}$ over $X$ lying above $x$. Let $\lambda' \colon \mathbb{G}_m \to H$ be a one-parameter subgroup. Since the image $\lambda'(\mathbb{G}_m)$ is a torus, we can choose a basis $\{v_1, v_2, \ldots, v_k\}$ of $H^0(X, \mathcal{L})^*$ consisting of weight vectors, such that there exist integers $m_1, m_2, \ldots, m_k$ satisfying
\begin{align*}
    \lambda'(t) \cdot v_i = t^{m_i} v_i \quad \text{for all } 1 \leq i \leq k \text{ and } t \in \mathbb{G}_m.
\end{align*}
Writing $\hat{x} = \sum_{i=1}^k c_i v_i$ with coefficients $c_i \in \mathbb{C}$, the \textit{Hilbert-Mumford numerical function} is defined as
\begin{align*}
    \mu^{\mathcal{L}}(x, \lambda') := -\min_i \{ m_i : c_i \neq 0 \}.
\end{align*}

With this numerical setup, we recall the Hilbert-Mumford criterion for stability.

\begin{theorem}[{See \cite[Theorem 2.1]{GIT}}]\label{HM theorem for G}
    Let $x \in X$. Then:
    \begin{enumerate}
        \item $x \in X^{ss}_H(\mathcal{L})$ if and only if $\mu^{\mathcal{L}}(x, \lambda') \geq 0$ for all one-parameter subgroups $\lambda'$ of $H$.
        \item $x \in X^s_H(\mathcal{L})$ if and only if $\mu^{\mathcal{L}}(x, \lambda') > 0$ for all non-trivial one-parameter subgroups $\lambda'$ of $H$.
    \end{enumerate}
\end{theorem}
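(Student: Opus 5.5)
The plan is to pass to the affine cone and turn both statements into statements about limits of orbits under one-parameter subgroups. The weight computation handles the direction ``orbit behaviour $\Rightarrow$ sign of $\mu$''. The real content is the converse: every failure of semistability or stability is already detected by a single one-parameter subgroup.

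\emph{Step 1: reformulation on the cone.} Embed $X\subseteq\mathbb P(H^0(X,\mathcal L)^*)$ and let $\hat X$ be the affine cone, on which $H$ acts linearly. Invariant sections of $\mathcal L^{\otimes m}$ are exactly the homogeneous $H$-invariant polynomials of degree $m$ on $\hat X$. Since $H$ is reductive, closed invariant subsets are separated by invariants. Hence:
\begin{itemize}
\item $x$ is semistable iff $0\notin\overline{H\cdot\hat x}$;
\item $x$ is stable iff $H\cdot\hat x$ is closed in $\hat X$ and $\operatorname{Stab}_H(\hat x)$ is finite.
\end{itemize}

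\emph{Step 2: the weight computation for a fixed $\lambda'$.} Write $\hat x=\sum c_iv_i$. Then
\[
\lambda'(t)\hat x=\sum c_i t^{m_i}v_i .
\]
So $\lim_{t\to0}\lambda'(t)\hat x=0$ iff $m_i>0$ for all $i$ with $c_i\ne0$, i.e.\ iff $\mu^{\mathcal L}(x,\lambda')<0$. Likewise, the limit exists in $\hat X$ iff all such $m_i\ge0$, i.e.\ iff $\mu^{\mathcal L}(x,\lambda')\le0$. This gives the easy directions:
\begin{itemize}
\item If $\mu(x,\lambda')<0$ for some $\lambda'$, then $0\in\overline{H\hat x}$, so $x$ is unstable.
\item If $\mu(x,\lambda')\le0$ for some nontrivial $\lambda'$, then the limit $y=\lim\lambda'(t)\hat x$ exists. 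If $y\notin H\hat x$, the orbit is not closed. If $y\in H\hat x$, then $y$ is fixed by $\lambda'$, so some conjugate of $\lambda'(\mathbb G_m)$ stabilizes $\hat x$ and the stabilizer is infinite. In either case $x$ is not stable.
\end{itemize}

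\emph{Step 3: the converse, which is the main obstacle.} One must show that if $0\in\overline{H\hat x}$, or more generally if $\overline{H\hat x}$ contains a closed orbit $Z\ne H\hat x$, then some one-parameter subgroup $\lambda'$ has $\lim_{t\to0}\lambda'(t)\hat x\in Z$.

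\emph{Torus case.} For $H$ a torus, which is all that is needed for $G_0=\lambda(\mathbb G_m)$ in this paper, this is elementary convex geometry on the weight polytope $\operatorname{conv}\{m_i: c_i\neq0\}$.
\begin{itemize}
\item $0\in\overline{H\hat x}$ iff the origin is not in this convex hull. In that case a separating rational hyperplane gives a $\lambda'$ with all relevant $m_i>0$.
\item The orbit is closed iff the origin lies in the relative interior of the hull. The stabilizer is finite iff the weights span the character space. Together these mean the origin is in the interior of the hull, which is equivalent to every nontrivial $\lambda'$ pairing negatively with some weight, i.e.\ $\mu>0$.
\end{itemize}
For $G_0\cong\mathbb G_m$ this reduces to checking $\lambda$ and $\lambda^{-1}$ directly.

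\emph{General reductive case.} Here I would use the curve-selection argument. First choose a curve $\operatorname{Spec}\mathbb C((t))\to H$ whose image under the orbit map tends to a point of $Z$. Then apply the Iwahori--Cartan decomposition
\[
H(\mathbb C((t)))=H(\mathbb C[[t]])\,Y(T)\,H(\mathbb C[[t]])
\]
to replace this curve by a cocharacter, up to the bounded factors in $H(\mathbb C[[t]])$. This is the step I expect to carry the difficulty. Since only the torus case is used in this paper, I would state the general case with a reference to \cite{GIT} and give the convex-geometric proof in full.
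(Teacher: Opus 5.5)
The paper gives no argument for this statement. It is quoted from Mumford's \emph{GIT}, and it is used only in the form of Corollary~\ref{HM-corollary} for the rank-one torus $G_0=\lambda(\mathbb G_m)$. Your proposal replaces the citation by a self-contained proof in the abelian case. You reformulate stability on the affine cone, read off the easy implications from $\lambda'(t)\hat x=\sum_i c_it^{m_i}v_i$, and get the converse by separating the origin from the weight polytope. This is correct. For $G_0\cong\mathbb G_m$ it reduces to requiring $\mu^{\mathcal L}(x,\lambda)=-\min\{m_i:c_i\neq0\}$ and $\mu^{\mathcal L}(x,\lambda^{-1})=\max\{m_i:c_i\neq0\}$ to be both $\ge0$ (resp.\ both $>0$), which is exactly what the stability computations of Section~3 need. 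So your route gives an elementary, transparent justification of everything the paper uses, while the citation gives the full reductive statement at no cost.

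Two small repairs are needed in Step~1. First, degree-$m$ invariant polynomials on $\hat X$ agree with $H^0(X,\mathcal L^{\otimes m})^H$ in general only for $m\gg0$, since no projective normality is assumed. This still suffices, because a power of a nonvanishing invariant section lands in that range. Second, the equivalence of the paper's notion of stability (closed orbit in $X^{ss}$, finite stabilizer of $x$) with ``$H\hat x$ closed and $\operatorname{Stab}_H(\hat x)$ finite'' deserves its short proof rather than being asserted.

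Your sketch of the general case follows Mumford's route, but as written it would not yet prove the statement for nonabelian $H$, for two reasons.

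\emph{The valuation step is missing.} After writing the curve as $\phi=g_1\lambda'g_2$ with $g_i\in H(\mathbb C[[t]])$, you must show that $g_2(0)^{-1}\lambda' g_2(0)$ is the required one-parameter subgroup. The point $\lambda'(t)g_2(t)\hat x=g_1(t)^{-1}\phi(t)\hat x$ has a limit as $t\to0$ (resp.\ tends to $0$). Hence every $T$-weight $\chi$ occurring in $g_2(0)\hat x$ satisfies $\langle\chi,\lambda'\rangle\ge0$ (resp.\ $>0$), which gives the claim. Moreover, $\lambda'$ is nontrivial: otherwise $\phi\in H(\mathbb C[[t]])$ and the limit would lie in $H\hat x$. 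For part~(2) you only need this limit to exist, not to lie in $Z$.

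\emph{The closed-orbit, infinite-stabilizer case is not covered.} Step~3 only handles non-closed orbits. When $H\hat x$ is closed but $\operatorname{Stab}_H(\hat x)$ is infinite, you need a nontrivial one-parameter subgroup \emph{inside} the stabilizer. For tori this is automatic; it is your spanning condition. For general $H$, a positive-dimensional stabilizer could a priori be unipotent. The missing input is Matsushima's theorem: stabilizers of closed orbits in affine $H$-varieties are reductive, hence contain a nontrivial torus.

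Neither point affects the paper, which applies the criterion only to $G_0$.
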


Applying this locally to the action of a single one-parameter subgroup, we obtain the following corollary.

\begin{corollary}\label{HM-corollary}
    Let $H, X,$ and $\mathcal{L}$ be as above, and let $\lambda' \colon \mathbb{G}_m \to H$ be a one-parameter subgroup. Then:
    \begin{enumerate}
        \item $x \in X^{ss}_{\lambda'(\mathbb{G}_m)}(\mathcal{L})$ if and only if both $\mu^{\mathcal{L}}(x, \lambda')$ and $\mu^{\mathcal{L}}(x, -\lambda')$ are non-negative.
        \item $x \in X^s_{\lambda'(\mathbb{G}_m)}(\mathcal{L})$ if and only if both $\mu^{\mathcal{L}}(x, \lambda')$ and $\mu^{\mathcal{L}}(x, -\lambda')$ are strictly positive.
    \end{enumerate}
\end{corollary}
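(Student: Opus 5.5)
This corollary follows from the Hilbert--Mumford criterion (Theorem~\ref{HM theorem for G}) applied with $H=\lambda'(\mathbb G_m)$. The point is that this one-dimensional torus has very few one-parameter subgroups, and all of them are controlled by $\lambda'$ and $-\lambda'$.

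First, assume $\lambda'$ is nontrivial; if $\lambda'$ is trivial, both sides of each statement are interpreted degenerately and this case is set aside. Then $H=\lambda'(\mathbb G_m)\cong\mathbb G_m$. Every one-parameter subgroup of $H$ has the form $t\mapsto\lambda'(t^k)=(k\lambda')(t)$ for some $k\in\mathbb Z$. I would justify this by noting that $\mathrm{Hom}(\mathbb G_m,\mathbb G_m)=\mathbb Z$, after replacing $\lambda'$ by the primitive cocharacter with the same image; the kernel of $\lambda'$ is finite, so the image is a one-dimensional torus.

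Second, I would record how the numerical function scales. If $\lambda'(t)v_i=t^{m_i}v_i$, then $(k\lambda')(t)v_i=t^{km_i}v_i$. Hence, for $k>0$,
\[
\mu^{\mathcal L}(x,k\lambda')=k\,\mu^{\mathcal L}(x,\lambda'),
\qquad
\mu^{\mathcal L}(x,-k\lambda')=k\,\mu^{\mathcal L}(x,-\lambda').
\]
Here $-\lambda'$ means $t\mapsto\lambda'(t^{-1})$, whose weights are $-m_i$. The weight basis and the coefficients $c_i$ are the same for every $k$, so the minimum is simply multiplied by $k$. So the sign of $\mu^{\mathcal L}(x,\nu)$ for any nontrivial $\nu$ equals the sign of $\mu^{\mathcal L}(x,\lambda')$ or of $\mu^{\mathcal L}(x,-\lambda')$, according to the sign of $k$.

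Third, I substitute into Theorem~\ref{HM theorem for G}. For part (1), the condition ``$\mu^{\mathcal L}(x,\nu)\ge0$ for all $\nu$'' reduces to the two conditions for $\pm\lambda'$; the case $\nu=0$ gives $\mu=0$ and is automatic. For part (2), the condition ranges over nontrivial $\nu$ only, so it reduces to strict positivity for $\pm\lambda'$.

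The only mildly delicate point is the passage to a primitive cocharacter when $\lambda'$ is not injective. Since $\mu$ scales positively, replacing $\lambda'$ by the primitive generator $\lambda_0$ with $\lambda'=d\lambda_0$, $d>0$, does not change any signs. So I do not expect a genuine obstacle here.
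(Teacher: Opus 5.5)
Your argument is correct and is essentially the paper's own, which it leaves as a one-line remark: apply Theorem~\ref{HM theorem for G} to the one-dimensional torus $\lambda'(\mathbb G_m)$, whose one-parameter subgroups are the integer multiples of a primitive generator, and use that $\mu^{\mathcal L}(x,\cdot)$ scales by positive integers, so only the signs at $\pm\lambda'$ matter. One small sharpening: for trivial $\lambda'$ every point is stable while both numerical functions vanish, so part (2) is actually false there, and nontriviality of $\lambda'$ is a genuine implicit hypothesis rather than merely a case to be read degenerately.
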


Let $H, B, T$ and $\overline{C(B)}$ be as above and $H/B$ be the flag variety. The numerical function is explicitly computable on flag varieties, as demonstrated by Seshadri.

\begin{lemma}[{See \cite[Lemma 5.1]{Ses2}}]\label{CSS}
    Let $x = bwB/B$ for some $b \in B$ and $w \in W$. If $\lambda' \in \overline{C(B)}$ is a dominant one-parameter subgroup, then
    \begin{align*}
        \mu^{\mathcal{L}(\chi)}(x, \lambda') = -\langle w(\chi), \lambda' \rangle.
    \end{align*}
    \textit{(Note: The negative sign arises here because we are considering the left action of $B$ on $H/B$, whereas \cite[Lemma 5.1]{Ses2} uses the right action.)}
\end{lemma}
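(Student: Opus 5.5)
The plan is to compute $\mu^{\mathcal L(\chi)}(x,\lambda')$ directly in the projective embedding given by $\mathcal L(\chi)$. By the definition of the numerical function, the value depends only on which $\lambda'$-weights occur with nonzero coefficient in a lift $\hat x$. So it suffices to identify the $T$-weight decomposition of a lift of $x=bwB/B$ and then read off the minimal weight paired with $\lambda'$.

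\emph{Step 1: the embedding.} With the convention $\mathcal L(\chi)=G\times^B\mathbb C_{-\chi}$ we have $H^0(G/B,\mathcal L(\chi))\simeq V(\chi)^*$, so $H^0(G/B,\mathcal L(\chi))^*\simeq V(\chi)$, the irreducible module of highest weight $\chi$. The embedding $G/B\hookrightarrow\mathbb P(V(\chi))$ sends $gB$ to $[g\cdot v_\chi]$, where $v_\chi$ is a highest weight vector. This is well defined because $B$ stabilizes the line $\mathbb C v_\chi$.

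\emph{Step 2: a lift of $x$.} Let $\dot w\in N(T)$ be a representative of $w$. Then $\dot w v_\chi$ is a nonzero extremal weight vector of weight $w(\chi)$. Write $b=tu$ with $t\in T$ and $u\in U$, so that $\hat x=tu\,\dot w v_\chi$ is a lift of $x$. Expand $u$ as a product of root subgroup elements $u_\beta(a)$, $\beta\in R^+$. Each $u_\beta(a)=\exp(aX_\beta)$ maps a weight vector of weight $\nu$ to itself plus vectors of weights $\nu+k\beta$ with $k\ge1$. Hence
\[
u\,\dot w v_\chi=\dot w v_\chi+\sum_{\nu} v_\nu,
\]
where every $\nu$ in the sum has the form $w(\chi)+\sum_{\beta\in R^+}c_\beta\beta$ with $c_\beta\ge0$, not all zero. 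Since weight spaces of distinct weights are independent, the component of weight exactly $w(\chi)$ is $\dot w v_\chi\neq0$. The factor $t$ only rescales each weight component by a nonzero scalar, so it does not change which weights occur in the support.

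\emph{Step 3: reading off $\mu$.} Choose a $T$-weight basis of $V(\chi)$; it is also a $\lambda'$-weight basis, and a vector of $T$-weight $\nu$ has $\lambda'$-weight $\langle\nu,\lambda'\rangle$. Since $\lambda'\in\overline{C(B)}$, we have $\langle\beta,\lambda'\rangle\ge0$ for all $\beta\in R^+$. Therefore every weight $\nu$ occurring in $\hat x$ satisfies $\langle\nu,\lambda'\rangle\ge\langle w(\chi),\lambda'\rangle$, and equality is attained by the nonzero component $\dot w v_\chi$. So the minimal exponent is $\min\{m_i: c_i\ne0\}=\langle w(\chi),\lambda'\rangle$, which gives $\mu^{\mathcal L(\chi)}(x,\lambda')=-\langle w(\chi),\lambda'\rangle$.

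The only delicate point is the bookkeeping of signs and conventions. With the left action of $B$ and the convention $H^0=V(\chi)^*$, the module in which $\hat x$ lives must be $V(\chi)$, with highest weight $\chi$, rather than $V(\chi)^*$, with lowest weight $-\chi$. Also, $U$ must raise weights, so that dominance of $\lambda'$ makes $w(\chi)$ the \emph{minimal} $\lambda'$-weight. Getting either convention backwards would flip the sign, which is exactly the discrepancy with \cite{Ses2} noted after the statement. I would therefore check the whole computation once on $\mathrm{SL}_2$ with $w=e$ and $w=s$ before writing it in general.
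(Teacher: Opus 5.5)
Your proof is correct. The convention check you flag also comes out right. With $\mathcal L(\chi)=G\times^B\mathbb C_{-\chi}$, the sections are functions satisfying $f(gb)=\chi(b)f(g)$. The evaluation functional $\mathrm{ev}_e\in H^0(G/B,\mathcal L(\chi))^*$ is fixed by $U$ and satisfies $(t\cdot\mathrm{ev}_e)(f)=f(t)=\chi(t)f(e)$. So the line over $eB$ is the highest-weight line of weight $\chi$ in $H^0(G/B,\mathcal L(\chi))^*\simeq V(\chi)$, as you assume.

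The paper gives no argument of its own here. It imports Seshadri's Lemma~5.1 and fixes the sign with the remark about left versus right actions. Your direct computation replaces that citation with a self-contained derivation. In it the sign is forced by the paper's stated conventions ($H^0\simeq V(\chi)^*$ and $\mu=-\min m_i$), which is the one place a reader could go wrong.

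Another standard route uses Mumford's invariance $\mu^{\mathcal L}(px,\lambda')=\mu^{\mathcal L}(x,\lambda')$ for $p\in P(\lambda')=\{g:\lim_{t\to0}\lambda'(t)g\lambda'(t)^{-1}\text{ exists}\}$. This group contains $B$ because $\lambda'$ is dominant. The reduction is then immediate to the $T$-fixed point $wB/B$, whose lift $\dot w v_\chi$ is a single vector of $\lambda'$-weight $\langle w(\chi),\lambda'\rangle$. That route avoids expanding $u$ into root-subgroup factors. Yours instead shows explicitly why dominance of $\lambda'$ makes $w(\chi)$ the minimal exponent.
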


By imitating the proof of Lemma ~ \ref{CSS} for the opposite Borel subgroup $B^-$, one naturally deduces the following variation.

\begin{lemma}\label{variation of Seshadri's lemma}
    Let $w \in W$ and $x \in B^-wB/B$. For every dominant one-parameter subgroup $\lambda' \in \overline{C(B)}$, we have
    \begin{align*}
        \mu^{\mathcal{L}(\chi)}(x, -\lambda') = \langle w(\chi), \lambda' \rangle.
    \end{align*}
\end{lemma}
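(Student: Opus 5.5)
The quickest route is to reduce the statement to Lemma~\ref{CSS} by conjugating with a representative $\dot w_0\in N_G(T)$ of the longest element $w_0\in W$. The conjugation swaps $B$ and $B^-$. I also sketch a direct weight-vector computation as a cross-check, since it makes the sign conventions transparent.

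\emph{Reduction via $w_0$.} We have $B^-=\dot w_0B\dot w_0^{-1}$. So if $x=b^-wB/B$ with $b^-\in B^-$, then
\[
\dot w_0x=(\dot w_0b^-\dot w_0^{-1})\,w_0wB/B\in Bw_0wB/B .
\]
The Hilbert--Mumford function is invariant under the $G$-linearization: for $g\in G$ and any one-parameter subgroup $\mu'$,
\[
\mu^{\mathcal L(\chi)}(gx,\,g\mu'g^{-1})=\mu^{\mathcal L(\chi)}(x,\mu').
\]
This holds because $g$ carries a $\mu'$-weight basis of $H^0(G/B,\mathcal L(\chi))^*\simeq V(\chi)$ to a $g\mu'g^{-1}$-weight basis with the same weights, and carries $\hat x$ to $\widehat{gx}$. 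Apply this with $g=\dot w_0$ and $\mu'=-\lambda'$. The conjugated subgroup $\dot w_0(-\lambda')\dot w_0^{-1}$ is $-w_0(\lambda')$. It is dominant, since $\langle\alpha,-w_0\lambda'\rangle=\langle -w_0\alpha,\lambda'\rangle\ge0$ for $\alpha\in R^+$, because $-w_0$ permutes $R^+$. Lemma~\ref{CSS}, applied to $\dot w_0x\in B(w_0w)B/B$, then gives
\[
\mu^{\mathcal L(\chi)}(x,-\lambda')=-\langle w_0w(\chi),-w_0\lambda'\rangle=\langle w(\chi),\lambda'\rangle ,
\]
using $W$-invariance of the pairing and $w_0^2=1$.

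\emph{Direct check.} Write $b^-=u^-t$ with $u^-\in U^-$ and $t\in T$. The point $wB/B$ lifts to an extremal weight vector $v_w$ in the cone over $G/B\subset\mathbb P(V(\chi))$. With the paper's conventions, the weight of $v_w$ is the one for which Lemma~\ref{CSS} reads $\mu(x,\lambda')=-\langle w\chi,\lambda'\rangle$, namely $w(\chi)$. The element $t$ only rescales $v_w$. Next,
\[
u^-v_w=v_w+\sum_{\beta}v_\beta ,
\]
where each $v_\beta$ has weight $w(\chi)-\beta$ and $\beta$ runs over nonzero nonnegative combinations of positive roots. For the one-parameter subgroup $-\lambda'$, the weight of $v_\beta$ is
\[
-\langle w(\chi)-\beta,\lambda'\rangle=-\langle w(\chi),\lambda'\rangle+\langle\beta,\lambda'\rangle\ge-\langle w(\chi),\lambda'\rangle ,
\]
since $\lambda'$ is dominant. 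The coefficient of $v_w$ is nonzero, so the minimum weight occurring is exactly $-\langle w(\chi),\lambda'\rangle$. Hence $\mu^{\mathcal L(\chi)}(x,-\lambda')=\langle w(\chi),\lambda'\rangle$.

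The only real obstacle is bookkeeping of signs: whether $wB/B$ corresponds to weight $w(\chi)$ or $-w(\chi)$ under the convention $\mathcal L(\chi)=G\times^B\mathbb C_{-\chi}$. The conjugation argument avoids fixing this, because it inherits whatever convention makes Lemma~\ref{CSS} correct.
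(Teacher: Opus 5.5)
Your proposal is correct, and your main argument takes a different route from the paper. The paper writes out no proof. It only remarks that one imitates Seshadri's argument for Lemma~\ref{CSS} with $B^-$ in place of $B$, and that imitation is essentially your ``direct check'': $U^-$ adds only components of $T$-weight $w(\chi)-\beta$ to the extremal vector $v_w$. Since $\lambda'$ is dominant, these components have $(-\lambda')$-weight at least $-\langle w(\chi),\lambda'\rangle$, so the extremal component realizes the minimum.

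Your primary argument instead deduces the statement formally from Lemma~\ref{CSS}. It uses only three facts: the equivariance $\mu^{\mathcal L(\chi)}(gx,g\mu'g^{-1})=\mu^{\mathcal L(\chi)}(x,\mu')$ for the $G$-linearized bundle, the identity $\dot w_0B^-\dot w_0^{-1}=B$, and the fact that $-w_0$ preserves $\overline{C(B)}$. Each step checks out, including $\dot w_0x\in B(w_0w)B/B$ and $-\langle w_0w(\chi),-w_0\lambda'\rangle=\langle w(\chi),\lambda'\rangle$.

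The gain from your route is that you never have to decide which weight the line over $wB/B$ carries under the convention $\mathcal L(\chi)=G\times^B\mathbb C_{-\chi}$. The paper handles that sign bookkeeping only through its parenthetical note. Your argument is therefore correct exactly to the extent that Lemma~\ref{CSS} is, and it shows the $B^-$ version is not an independent fact but the $w_0$-conjugate of Lemma~\ref{CSS}. The imitation route, by contrast, is self-contained and does not rely on Lemma~\ref{CSS}. Its cost is that it must fix that convention explicitly, which your cross-check does by reading it off from Lemma~\ref{CSS}.
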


\begin{lemma}\label{Section2:Lemma 2.1.4}Let  $\chi$ be regular dominant character of $T$. Then
	\begin{enumerate}
		\item  if $x\in BwB/B$, we have $\mu^{\mathcal{L}(\chi)}(x,-\lambda)=\langle w(\chi),\lambda\rangle$
		\item if $x\in B^{-}wB/B$, we have $\mu^{\mathcal{L}(\chi)}(x,\lambda)=-\langle w(\chi),\lambda\rangle$.
	\end{enumerate}
\end{lemma}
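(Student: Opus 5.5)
The plan is to deduce both parts directly from Lemma~\ref{CSS} and Lemma~\ref{variation of Seshadri's lemma}, applied to the dominant one-parameter subgroup $-\lambda$. By the computation in the subsection on the distinguished torus action, $\langle\beta,\lambda\rangle=-\operatorname{ht}(\beta)$ for $\beta\in R^+$. Hence $\langle\beta,-\lambda\rangle=\operatorname{ht}(\beta)>0$ for every positive root, so $-\lambda\in\overline{C(B)}$. The regularity of $\chi$ is only needed so that $\mathcal L(\chi)$ is ample. The Hilbert--Mumford function is then defined via a very ample power, and both sides of each formula scale linearly in $\chi$, so replacing $\mathcal L(\chi)$ by a tensor power changes nothing.

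For (1), take $x\in BwB/B$, so $x=bwB/B$ for some $b\in B$. Apply Lemma~\ref{CSS} with $\lambda'=-\lambda$. This gives
\[
\mu^{\mathcal L(\chi)}(x,-\lambda)=-\langle w(\chi),-\lambda\rangle=\langle w(\chi),\lambda\rangle,
\]
using bilinearity of the pairing.

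For (2), take $x\in B^-wB/B$ and apply Lemma~\ref{variation of Seshadri's lemma} with $\lambda'=-\lambda$. That lemma computes $\mu^{\mathcal L(\chi)}(x,-\lambda')$, and here $-\lambda'=\lambda$. So
\[
\mu^{\mathcal L(\chi)}(x,\lambda)=\langle w(\chi),-\lambda\rangle=-\langle w(\chi),\lambda\rangle.
\]

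The only step needing care is the sign bookkeeping. One has to check that the dominance hypothesis in the cited lemmas is placed on $-\lambda$ and not on $\lambda$, since $\lambda$ itself is antidominant. One also has to match which of $\pm\lambda'$ each lemma evaluates. A sanity check at a fixed point confirms the signs. At $x=wB/B$ the line in the cone lies in the $T$-weight space of weight $-w(\chi)$, or $w(\chi)$ depending on convention. Then $\mu(x,\lambda)=-\mu(x,-\lambda)$, which is consistent with (1) and (2) both applying, since $wB/B\in BwB/B\cap B^-wB/B$.
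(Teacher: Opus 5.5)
Your proposal is correct and is essentially the paper's proof. Both arguments note that $-\lambda\in\overline{C(B)}$ and then apply Lemma~\ref{CSS} and Lemma~\ref{variation of Seshadri's lemma} with $\lambda'=-\lambda$, using the same sign bookkeeping; your remarks on passing to a very ample power and your fixed-point consistency check are harmless additions not in the paper.
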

\begin{proof} We have $-\lambda\in\overline{C(B)}$. 
	
	{\it{(1)}}: Let $x\in BwB/B$. 
From Lemma ~ \ref{CSS}, we have
\begin{align*}
	\mu^{\mathcal{L}(\chi)}(x,-\lambda)=-\langle w(\chi),-\lambda\rangle=\langle w(\chi),\lambda\rangle.
\end{align*}
{\it{(2)}}: Let $x\in B^{-}wB/B$. From Lemma ~ \ref{variation of Seshadri's lemma}, we have 
\begin{align*}
	\mu^{\mathcal{L}(\chi)}(x,\lambda)=\mu^{\mathcal{L}(\chi)}(x,-(-\lambda))=\langle w(\chi),-\lambda\rangle=-\langle w(\chi),\lambda\rangle.
\end{align*}
\end{proof}

\section{The GIT quotient of Peterson variety}
In this section we determine the semistable, stable, and polystable
loci for the \(G_0\)-action on \(\mathrm{Pet}_n\), describe the
resulting chamber structure, and identify the quotient in a
distinguished chamber. The key geometric input is the Richardson stratification together with the
\(\lambda\)-limits of its strata.

\subsection{Richardson stratification for $\mathrm{Pet}_n$} To apply the Hilbert-Mumford criterion, we refine the Peterson-cell
decomposition by intersecting Schubert and opposite Schubert cells.
The resulting locally closed pieces have constant numerical weights.

Recall from \ref{affine-paving} that, for every $J\subseteq S$, we have \begin{align}
	X_{J}^{0}:&= (Bw_{J}B/B)\cap \mathrm{Pet}_{n} \text{ and } X_{J}:=\overline{X_{J}^{0}}\\
	\Omega_{J}^{0}:&=(B^{-}w_{J}B/B)\cap \mathrm{Pet}_{n}\text{ and }\Omega_{J}:=\overline{\Omega_{J}^{0}}
\end{align}
For every $I,J\subseteq S$, we define \begin{align}
	Y^{0}_{I,J}:&=(B^{-}w_{J}B/B)\cap (Bw_{I}B/B)\cap \mathrm{Pet}_{n}\\
	Y_{I,J}:&=\overline{Y^{0}_{I,J}}
\end{align}

From \cite[Lemma 3.5]{Abe}, the Peterson variety admits a decomposition into intersections with opposite Schubert cells:
\begin{align*}
    \mathrm{Pet}_{n} = \bigsqcup_{J\subseteq S}\bigsqcup_{I\subseteq S} Y^{0}_{I,J},
\end{align*}
where $Y^{0}_{I,J} := (B^{-}w_{J}B/B) \cap (Bw_{I}B/B) \cap \mathrm{Pet}_{n}$. We know that the intersection $(Bw_{I}B/B) \cap (B^{-}w_{J}B/B) \neq \emptyset$ if and only if $w_{J} \leq w_{I}$ in the Bruhat order. For the longest elements of parabolic subgroups, this Bruhat inequality is equivalent to $J \subseteq I$. Therefore, the decomposition simplifies to:
\begin{align}\label{Section 3: Eq 3.1.1}
    \mathrm{Pet}_{n} = \bigsqcup_{J \subseteq I \subseteq S} Y^{0}_{I,J}.
\end{align}

\begin{remark} The irreducibility of Peterson-Richardson intersections is not known in general; see \cite[Question~7.6(i)]{Horiguchi}. In fact, such intersections need not always be irreducible. The particular strata used in this paper, however, are cases for which irreducibility follows directly: namely \(Y^0_{I,\emptyset}\), \(Y^0_{I,I}\), and the endpoint strata \(Y^0_{S,S\setminus\{\alpha_1\}}\) and \(Y^0_{S,S\setminus\{\alpha_{n-1}\}}\). \end{remark}

\subsection{Bialynicki--Birula decomposition for $\mathrm{Pet}_n$} Recall that the
$G_0=\lambda(\mathbb G_m)$-fixed points of $\mathrm{Pet}_n$ are the points
\[
p_I := w_{I}B/B, \qquad I \subseteq S.
\]

For each $I \subseteq S$, define the attracting and repelling sets
\[
W_I^+ := \{x \in \mathrm{Pet}_n \mid \lim_{t \to 0} \lambda(t)\cdot x = p_I\},
\qquad
W_I^- := \{x \in \mathrm{Pet}_n \mid \lim_{t \to \infty} \lambda(t)\cdot x = p_I\}.
\]

\begin{proposition}\label{thm:BB-Peterson}
The Peterson variety admits the disjoint decompositions
\[
\mathrm{Pet}_n
=
\bigsqcup_{I\subseteq S}W_I^+
\qquad\text{and}\qquad
\mathrm{Pet}_n
=
\bigsqcup_{I\subseteq S}W_I^-.
\]
Each \(W_I^\pm\) is a locally closed subvariety of
\(\mathrm{Pet}_n\). Moreover, these decompositions are compatible with
the Peterson--Richardson stratification:
\[
W_I^+
=
\bigsqcup_{J\supseteq I}Y^0_{J,I}
=
\Omega_I^0,
\qquad
W_I^-
=
\bigsqcup_{J\subseteq I}Y^0_{I,J}
=
X_I^0
\simeq
\mathbb A^{|I|}.
\]

If \(\mathrm{Pet}_n\) is smooth at \(p_I\), then \(W_I^+\) is an
affine space. More precisely, if
\[
T_{p_I}\mathrm{Pet}_n
=
T_{p_I}^+\oplus T_{p_I}^-
\]
denotes the decomposition into positive and negative
\(\lambda\)-weight spaces, then
\[
W_I^+
\simeq
\mathbb A^{\dim T_{p_I}^+},
\qquad
W_I^-
\simeq
\mathbb A^{\dim T_{p_I}^-}.
\]
Consequently, $\dim W_I^\pm
=
\#\{
\text{positive/negative \(\lambda\)-weights on }
T_{p_I}\mathrm{Pet}_n
\},$
with weights counted with multiplicity. Under the hypothesis that \(\mathrm{Pet}_n\) is smooth at \(p_I\), we
therefore have
\[
\dim T_{p_I}^- = |I|,
\qquad
\dim T_{p_I}^+ = n-1-|I|.
\] 
\end{proposition}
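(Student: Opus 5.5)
The plan is to compute the $\lambda$-limits directly on Schubert and opposite Schubert cells using the root-subgroup conjugation formula, and then let the Peterson--Richardson decomposition \eqref{Section 3: Eq 3.1.1} do the bookkeeping. Write a point of $Bw_IB/B$ as $u\,w_IB/B$ with $u\in U$, and factor $u$ as a product of $u_\beta(a_\beta)$ over $\beta\in R^+$. Since $\langle\beta,\lambda\rangle=-\mathrm{ht}(\beta)$, we have
\[
\lambda(t)\,u_\beta(a)\,\lambda(t)^{-1}=u_\beta(t^{-\mathrm{ht}(\beta)}a).
\]
Also $\lambda(t)$ fixes $w_IB/B$. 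Hence
\[
\lambda(t)\cdot u w_IB/B=(\lambda(t)u\lambda(t)^{-1})\,w_IB/B\longrightarrow w_IB/B \quad\text{as } t\to\infty .
\]
Thus $X_I^0\subseteq W_I^-$. Symmetrically, for $u^-\in U^-$ the negative roots have positive $\lambda$-weight, so $\lambda(t)\cdot u^-w_IB/B\to p_I$ as $t\to 0$. Thus $\Omega_I^0\subseteq W_I^+$.

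Next I upgrade these inclusions to equalities. Since $\mathrm{Pet}_n$ is projective and $G_0$-stable, every orbit map extends over $0$ and $\infty$. The limits are $G_0$-fixed points of $\mathrm{Pet}_n$, hence of the form $p_I$, and they are unique. So the $W_I^\pm$ are pairwise disjoint and cover $\mathrm{Pet}_n$. On the other hand, $\mathrm{Pet}_n=\bigsqcup_I X_I^0=\bigsqcup_I\Omega_I^0$, by the paving of \S\ref{affine-paving} and \cite[Lemma~3.5]{Abe}. Two partitions indexed by the same set with $X_I^0\subseteq W_I^-$ for all $I$ must coincide, so $W_I^-=X_I^0$; likewise $W_I^+=\Omega_I^0$.

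The refinements follow from \eqref{Section 3: Eq 3.1.1}. Indeed, $\Omega_I^0=\bigsqcup_{J}Y^0_{J,I}$, where only $J\supseteq I$ contribute by the Bruhat criterion. Similarly $X_I^0=\bigsqcup_{J\subseteq I}Y^0_{I,J}$. Tymoczko's paving gives $X_I^0\simeq\mathbb A^{|I|}$. Local closedness holds because each piece is a Schubert or opposite Schubert cell, which is locally closed in $G/B$, intersected with the closed subvariety $\mathrm{Pet}_n$.

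For the smooth case, I would work in the $G_0$-stable affine chart $\mathcal U_I:=w_IU^-w_I^{-1}\cdot p_I\cong\mathbb A^{\binom n2}$. On this chart $\lambda$ acts linearly with nonzero weights $\langle w_I(\alpha),\lambda\rangle$ for $\alpha\in R^-$, and $p_I$ is the origin. Then $\mathrm{Pet}_n\cap\mathcal U_I$ is a closed $G_0$-stable affine subvariety. Every point of $W_I^\pm$ lies in $\mathcal U_I$: its orbit eventually enters this open neighbourhood of $p_I$, and $\mathcal U_I$ is $G_0$-stable. So $W_I^\pm$ is the attracting (respectively repelling) set of the origin for a linear $\mathbb G_m$-action restricted to a closed subvariety that is smooth at the origin. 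Here I would run the Bia{\l}ynicki-Birula local argument. First choose a $G_0$-equivariant linear projection $\mathcal U_I\to T_{p_I}\mathrm{Pet}_n$ that restricts to the identity on the tangent space. This projection is étale at $p_I$ on $\mathrm{Pet}_n\cap\mathcal U_I$. Then show that it maps the attracting set isomorphically onto $T^+_{p_I}$, and likewise for the repelling set onto $T^-_{p_I}$.

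I expect this step to be the main obstacle. The usual Bia{\l}ynicki-Birula theorem is stated for smooth complete varieties, whereas here only the single point $p_I$ is assumed smooth. I would try first to use that, on $\mathcal U_I$, the attracting set is cut out by the vanishing of the coordinates of non-positive weight, and to check smoothness of $W_I^+$ along the whole cell by contracting to $p_I$ via the $G_0$-action, since the singular locus is closed and $G_0$-stable.

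The dimension count is then formal. Since $p_I$ is an isolated fixed point, $T_{p_I}\mathrm{Pet}_n$ has no zero weights. Smoothness gives $\dim T_{p_I}\mathrm{Pet}_n=n-1$. Finally, $W_I^-=X_I^0\simeq\mathbb A^{|I|}$ forces $\dim T^-_{p_I}=|I|$, hence $\dim T^+_{p_I}=n-1-|I|$.
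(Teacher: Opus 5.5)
Your handling of the decompositions is correct and is essentially the paper's argument. The root-subgroup computation gives $X_I^0\subseteq W_I^-$ and $\Omega_I^0\subseteq W_I^+$, and the partition argument upgrades these inclusions to equalities. Reading local closedness off $W_I^+=\Omega_I^0$ and $W_I^-=X_I^0$ is cleaner than the paper's appeal to general properties of attracting sets.

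The gap is in the smooth case, exactly at the step you flag. You never prove that $W_I^+=\mathrm{Pet}_n\cap V^+$ is an affine space of dimension $\dim T^+_{p_I}$, where $V^+$ is the positive-weight subspace of your chart $\mathcal U_I$.
\begin{itemize}
\item Smoothness of $\mathrm{Pet}_n$ at $p_I$ does not by itself make $\mathrm{Pet}_n\cap V^+$ smooth of that dimension at $p_I$.
\item The claim that the equivariant projection maps the attractor isomorphically onto $T^+_{p_I}$ is stated as a goal, not proved.
\item Your contraction argument only propagates smoothness of $W_I^+$ from $p_I$ to the rest of the cell, so it presupposes the missing point.
\end{itemize}
Your final count inherits the gap. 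The inclusion $T_{p_I}W_I^-\subseteq T^-_{p_I}$ only gives $\dim T^-_{p_I}\ge|I|$, and equality needs $W_I^-\simeq\mathbb A^{\dim T^-_{p_I}}$.

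The paper closes this by applying your contraction idea to $\operatorname{Sing}(\mathrm{Pet}_n)$ rather than to $W_I^\pm$. That locus is closed and $G_0$-stable, and $p_I$ lies in the closure of every orbit in $W_I^\pm$, so $W_I^\pm\subseteq\mathrm{Pet}_n^{\mathrm{sm}}$. Now $\mathrm{Pet}_n^{\mathrm{sm}}$ is a smooth quasi-projective $G_0$-variety covered by invariant affine opens, which is all the Bia{\l}ynicki-Birula theorem requires. Completeness is used only to ensure that the cells exhaust the variety. Hence $W_I^\pm$ is the cell of the isolated fixed point $p_I$ in $\mathrm{Pet}_n^{\mathrm{sm}}$, an affine space of dimension $\dim T^\pm_{p_I}$.

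If you prefer to stay in your chart, you can instead complete at $p_I$.
\begin{itemize}
\item Pick weight vectors $y_1,\dots,y_{n-1}\in\mathfrak m_{p_I}$ lifting a weight basis of the cotangent space. Then $\widehat{\mathcal O}_{\mathrm{Pet}_n,p_I}\cong\mathbb C[[y_1,\dots,y_{n-1}]]$ compatibly with weights.
\item The ideal of $\mathrm{Pet}_n\cap V^+$ is generated by functions of negative weight. In the completion these generate exactly $(y_j:\operatorname{wt}y_j<0)$, since every monomial in their expansions has negative weight.
\item Since $\lambda$ is regular there are no zero weights, so the attractor is smooth at $p_I$ of dimension $\dim T^+_{p_I}$.
\item Only now does your contraction argument give smoothness along the whole cell.
\item Finally, the coordinate ring is a nonnegatively graded quotient of $\mathcal O(V^+)$ with degree-zero part $\mathbb C$. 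Graded Nakayama then shows it is a polynomial ring in $\dim T^+_{p_I}$ variables.
\end{itemize}
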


\begin{proof}
Since $\mathrm{Pet}_n$ is projective and $\lambda$ acts algebraically on it, the limit
\[
\lim_{t \to 0} \lambda(t)\cdot x
\]
exists for every $x \in \mathrm{Pet}_n$ by properness. The limit point is necessarily
fixed by $\lambda$, because
\[
\lambda(s)\cdot \Bigl(\lim_{t \to 0} \lambda(t)\cdot x\Bigr)
=
\lim_{t \to 0} \lambda(st)\cdot x
=
\lim_{t \to 0} \lambda(t)\cdot x
\]
for every $s \in \mathbb G_m$.

By the description of the fixed locus, this limit must be one of the points
$p_I = w_{I}B/B$. Hence every point of $\mathrm{Pet}_n$ belongs to at least one
attracting set $W_I^+$.

The sets $W_I^+$ are pairwise disjoint: if
$x \in W_I^+ \cap W_J^+$, then the limit $\lim_{t \to 0}\lambda(t)\cdot x$
is both $p_I$ and $p_J$, so $p_I = p_J$ and hence $I = J$. Therefore
\[
\mathrm{Pet}_n = \bigsqcup_{I \subseteq S} W_I^+.
\]
The same argument applied to $t \to \infty$ gives
\[
\mathrm{Pet}_n = \bigsqcup_{I \subseteq S} W_I^-.
\]

Each $W_I^\pm$ is locally closed by standard properties of attracting sets for
an algebraic $\mathbb G_m$-action. 

We already have that the  Peterson variety admits the Richardson stratification
\[
\mathrm{Pet}_n = \bigsqcup_{J \subseteq I \subseteq S} Y^0_{I,J}.
\]
For a point $x \in Y^0_{I,J}$, since $x$ lies in the opposite Schubert cell $B^-w_{J}B/B$ and $\lambda$ is antidominant, the limit as $t\to 0$ is $p_J = w_{J}B/B$. Again since $x$ lies in the Schubert cell $Bw_{I}B/B$, the limit as $t\to\infty$ is $p_I = w_{I}B/B$. Hence $Y^0_{I,J} \subseteq W_J^+$ and $Y^0_{I,J} \subseteq W_I^-$. Consequently,
\[
W_I^+ = \bigsqcup_{J \supseteq I} Y^0_{J,I}, \qquad
W_I^- = \bigsqcup_{K \subseteq I} Y^0_{I,K},
\]

proving the compatibility of the decomposition with the Richardson stratification.

By Tymoczko's affine paving, each Peterson cell $
X_I^0 = (B w_I B/B) \cap \mathrm{Pet}_n$
is isomorphic to an affine space of dimension $|I|$ (see \cite{Tymoczko}, \cite{Tymoczko1}). Therefore
\[
W_I^- \cong \mathbb{A}^{|I|}
\,\, \text{for every} \,\, I \subseteq S.\]

Suppose \(\mathrm{Pet}_n\) is smooth at \(p_I\). The locus
\(\operatorname{Sing}(\mathrm{Pet}_n)\) is closed, and it is \(G_0\)-stable because \(G_0\) acts by
automorphisms of \(\mathrm{Pet}_n\). Let \(x\in W_I^{+}\) and suppose \(x\) were singular. Then
\(G_0\cdot x\subseteq\operatorname{Sing}(\mathrm{Pet}_n)\), and since the latter is closed,
\[
\overline{G_0\cdot x}\subseteq\operatorname{Sing}(\mathrm{Pet}_n).
\]
The morphism \(\mathbb G_m\to\mathrm{Pet}_n\), \(t\mapsto\lambda(t)\cdot x\), extends to
\(\mu:\mathbb A^1\to\mathrm{Pet}_n\) with \(\mu(0)=p_I\); as \(\mathbb G_m\) is dense in
\(\mathbb A^1\), we get \(p_I\in\overline{G_0\cdot x}\). Hence
\(p_I\in\operatorname{Sing}(\mathrm{Pet}_n)\), a contradiction. Therefore
\(W_I^{+}\subseteq\mathrm{Pet}_n^{\mathrm{sm}}\), and the same argument with \(t\to\infty\) gives
\(W_I^{-}\subseteq\mathrm{Pet}_n^{\mathrm{sm}}\).

The smooth locus \(\mathrm{Pet}_n^{\mathrm{sm}}\) is a smooth
quasi-projective \(G_0\)-variety. Since it is normal and hence we get a cover of \(\mathrm{Pet}_n^{\mathrm{sm}}\) by
\(G_0\)-invariant affine open subsets. Hence the hypotheses of the
Bia{\l}ynicki--Birula theorem
\cite[Theorem~4.1]{BB}
are satisfied.

Since \(p_I\) is an isolated \(G_0\)-fixed point, the attracting set of
\(p_I\) in \(\mathrm{Pet}_n^{\mathrm{sm}}\) is an affine space whose
dimension is the number of positive \(\lambda\)-weights on
\(T_{p_I}\mathrm{Pet}_n\). Since
\(W_I^+\subseteq\mathrm{Pet}_n^{\mathrm{sm}}\), this attracting set is
precisely \(W_I^+\). Therefore $W_I^+
\simeq
\mathbb A^{\dim T_{p_I}^+}.$

Applying the same argument to the inverse one-parameter subgroup
\(t\mapsto\lambda(t^{-1})\), whose attracting set is \(W_I^-\), gives
\[
W_I^-
\simeq
\mathbb A^{\dim T_{p_I}^-}.
\]

On the other hand, we already know that $W_I^-=X_I^0\simeq\mathbb A^{|I|}.$ Hence $\dim T_{p_I}^-=|I|.$
Since \(p_I\) is smooth and $\dim\mathrm{Pet}_n=n-1,$
we have $\dim T_{p_I}\mathrm{Pet}_n=n-1.$
Moreover, \(p_I\) is an isolated fixed point, so the zero-weight
subspace of \(T_{p_I}\mathrm{Pet}_n\) is zero. Thus
\[
T_{p_I}\mathrm{Pet}_n
=
T_{p_I}^+\oplus T_{p_I}^-,
\]
and consequently
\[
\dim T_{p_I}^+
=
n-1-|I|.
\]
Therefore 
\[
W_I^+
\simeq\mathbb A^{\,n-1-|I|},
\qquad
W_I^-
\simeq\mathbb A^{|I|},
\]
and their dimensions are respectively the numbers of positive and
negative \(\lambda\)-weights on
\(T_{p_I}\mathrm{Pet}_n\), counted with multiplicity.
\end{proof}

\subsection{Semistable, stable, and polystable loci}

We now describe the semistable, stable, and polystable loci as unions
of Richardson strata.

\begin{lemma}\label{lem:stability-strata}
Let $\chi$ be a regular dominant character of $T$ lying in the root lattice. Then the semistable, stable and polystable loci of the Peterson variety $\mathrm{Pet}_{n}$ with respect to the linearization $\mathcal{L}(\chi)$ and the one-parameter subgroup $\lambda$ are explicitly given by the following disjoint unions of Richardson strata: 
\[
(\mathrm{Pet}_n)^{ss}_{G_0}(\mathcal L(\chi))
=
\bigsqcup_{\substack{J\subseteq I\subseteq S\\
\langle w_I(\chi),\lambda\rangle\ge0\\
\langle w_J(\chi),\lambda\rangle\le0}}
Y^0_{I,J},
\]
and
\[
(\mathrm{Pet}_n)^s_{G_0}(\mathcal L(\chi))
=
\bigsqcup_{\substack{J\subseteq I\subseteq S\\
\langle w_I(\chi),\lambda\rangle>0\\
\langle w_J(\chi),\lambda\rangle<0}}
Y^0_{I,J}.
\]
Moreover,
\[
(\mathrm{Pet}_n)^{ps}_{G_0}(\mathcal L(\chi))
=
(\mathrm{Pet}_n)^s_{G_0}(\mathcal L(\chi))
\sqcup
\bigsqcup_{\substack{I\subseteq S\\
\langle w_I(\chi),\lambda\rangle=0}}
\{p_I\}.
\]
Equivalently, since \(Y^0_{I,I}=\{p_I\}\),
\[
(\mathrm{Pet}_n)^{ps}_{G_0}(\mathcal L(\chi))
=
(\mathrm{Pet}_n)^s_{G_0}(\mathcal L(\chi))
\sqcup
\bigsqcup_{\substack{I\subseteq S\\
\langle w_I(\chi),\lambda\rangle=0}}
Y^0_{I,I}.
\]
\end{lemma}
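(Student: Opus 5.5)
Fix a point \(x\in\mathrm{Pet}_n\). By the Richardson stratification \eqref{Section 3: Eq 3.1.1}, \(x\) lies in a unique stratum \(Y^0_{I,J}\) with \(J\subseteq I\). In particular \(x\in Bw_IB/B\) and \(x\in B^-w_JB/B\). Lemma~\ref{Section2:Lemma 2.1.4} therefore gives both numerical functions directly:
\[
\mu^{\mathcal L(\chi)}(x,-\lambda)=\langle w_I(\chi),\lambda\rangle,
\qquad
\mu^{\mathcal L(\chi)}(x,\lambda)=-\langle w_J(\chi),\lambda\rangle .
\]
One should first note that \(\mathcal L(\chi)\) is ample for regular dominant \(\chi\), so the Hilbert--Mumford machinery applies (after passing to a power, which does not change the loci). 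Corollary~\ref{HM-corollary}, applied to the one-parameter subgroup \(\lambda\) of \(G_0\), then says \(x\) is semistable iff \(\langle w_I(\chi),\lambda\rangle\ge0\) and \(\langle w_J(\chi),\lambda\rangle\le0\). Likewise \(x\) is stable iff both inequalities are strict. Since these conditions depend only on \((I,J)\), both loci are unions of whole strata, which gives the first two displayed formulas.

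For the polystable locus, stable points are polystable by definition. The remaining work is to identify the strictly semistable points with closed orbit in the semistable locus.

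\emph{Fixed points.} A \(G_0\)-fixed point \(p_K=w_KB/B\) has \(Y^0_{K,K}=\{p_K\}\), so its semistability condition reads \(\langle w_K(\chi),\lambda\rangle=0\). Its orbit is a point, hence closed, and its stabilizer is all of \(G_0\). So these points are polystable but not stable. This accounts for the extra summands \(p_I\) with \(\langle w_I(\chi),\lambda\rangle=0\).

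\emph{Non-fixed strictly semistable points.} Take \(x\in Y^0_{I,J}\) semistable but not stable, with \(x\) not fixed, so \(J\subsetneq I\); one of the two weights then vanishes. If \(\langle w_J(\chi),\lambda\rangle=0\), then by Proposition~\ref{thm:BB-Peterson} we have \(\lim_{t\to0}\lambda(t)x=p_J\). The point \(p_J\) is semistable, since it lies in \(Y^0_{J,J}\) with weight zero. But \(p_J\notin G_0x\), because \(x\) is not fixed. Hence the orbit \(G_0x\) is not closed in the semistable locus. The case \(\langle w_I(\chi),\lambda\rangle=0\) is symmetric, using \(\lim_{t\to\infty}\lambda(t)x=p_I\).

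\emph{Stabilizers.} For the stable locus one should also check the stabilizer condition, but it is automatic here. By Theorem~\ref{HM theorem for G}, positivity of \(\mu\) for both \(\lambda\) and \(-\lambda\) already encodes finite stabilizer together with closed orbit. Equivalently, a non-fixed point of a \(\mathbb G_m\)-action has a finite stabilizer.

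The main subtle point is this polystable step. It uses the identification of the \(\lambda\)-limits of points of \(Y^0_{I,J}\) with \(p_J\) and \(p_I\) from Proposition~\ref{thm:BB-Peterson}, the fact that the limit points are themselves semistable precisely when the corresponding weight vanishes, and the observation that \(J\subsetneq I\) forces \(x\) not to be fixed. Everything else is a direct application of Lemma~\ref{Section2:Lemma 2.1.4} and Corollary~\ref{HM-corollary}.
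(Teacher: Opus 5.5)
Your proposal is correct and follows the paper's proof essentially step for step. You evaluate both Hilbert--Mumford weights on each stratum \(Y^0_{I,J}\) using the Seshadri-type formulas for Schubert and opposite Schubert cells, then apply Corollary~\ref{HM-corollary} to obtain the semistable and stable loci. For polystability you argue, as the paper does, that a non-fixed strictly semistable point has a \(\lambda\)-limit \(p_J\) or \(p_I\) which is itself semistable exactly when the corresponding weight vanishes, so its orbit is not closed. Your extra stabilizer remark is redundant given the Hilbert--Mumford criterion but harmless.
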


\begin{proof}
We know that the Peterson variety admits a decomposition into Richardson cells:
\begin{align}\label{Section 3: Eq 3.1.1}
    \mathrm{Pet}_{n} = \bigsqcup_{J \subseteq I \subseteq S} Y^{0}_{I,J}, 
\end{align}
where $Y^0_{I,J} := B^-w_{J}B/B \cap Bw_{I}B/B \cap \mathrm{Pet}_n$. 

Let \(x\in Y^0_{I,J}\). By
Lemma~\ref{variation of Seshadri's lemma}, the
Hilbert--Mumford numerical functions are
\[
\mu^{\mathcal L(\chi)}(x,\lambda)
=
-\langle w_J(\chi),\lambda\rangle
\]
and
\[
\mu^{\mathcal L(\chi)}(x,-\lambda)
=
\langle w_I(\chi),\lambda\rangle.
\]
Since \(G_0=\lambda(\mathbb G_m)\simeq\mathbb G_m\), by
Corollary~\ref{HM-corollary} it is enough to test the two
one-parameter subgroups \(\lambda\) and \(-\lambda\). Hence \(x\) is
semistable if and only if
\[
\mu^{\mathcal L(\chi)}(x,\lambda)\ge0,
\qquad
\mu^{\mathcal L(\chi)}(x,-\lambda)\ge0,
\]
or equivalently,
\[
\langle w_I(\chi),\lambda\rangle\ge0,
\qquad
\langle w_J(\chi),\lambda\rangle\le0.
\]
Taking the union over all Peterson--Richardson strata gives
\[
(\mathrm{Pet}_n)^{ss}_{G_0}(\mathcal L(\chi))
=
\bigsqcup_{\substack{J\subseteq I\subseteq S\\
\langle w_I(\chi),\lambda\rangle\ge0\\
\langle w_J(\chi),\lambda\rangle\le0}}
Y^0_{I,J}.
\]

Similarly, Corollary~\ref{HM-corollary} shows that \(x\) is stable if
and only if both Hilbert--Mumford inequalities are strict. Thus
\[
\langle w_I(\chi),\lambda\rangle>0,
\qquad
\langle w_J(\chi),\lambda\rangle<0,
\]
and therefore
\[
(\mathrm{Pet}_n)^s_{G_0}(\mathcal L(\chi))
=
\bigsqcup_{\substack{J\subseteq I\subseteq S\\
\langle w_I(\chi),\lambda\rangle>0\\
\langle w_J(\chi),\lambda\rangle<0}}
Y^0_{I,J}.
\]

It remains to determine the polystable locus. A semistable point is
polystable if and only if its \(G_0\)-orbit is closed in the
semistable locus.

First suppose \(I=J\). By
Proposition~\ref{thm:BB-Peterson}, $Y^0_{I,I}=\{p_I\}.$
At \(p_I\), Lemma~\ref{variation of Seshadri's lemma} gives
\[
\mu^{\mathcal L(\chi)}(p_I,\lambda)
=
-\langle w_I(\chi),\lambda\rangle,
\qquad
\mu^{\mathcal L(\chi)}(p_I,-\lambda)
=
\langle w_I(\chi),\lambda\rangle.
\]
Hence, by Corollary~\ref{HM-corollary}, \(p_I\) is semistable if and
only if $\langle w_I(\chi),\lambda\rangle=0.$
Since \(p_I\) is fixed by \(G_0\), its orbit is closed; therefore it is
polystable.

Now suppose \(I\neq J\) and let \(x\in Y^0_{I,J}\) be semistable.
By Proposition~\ref{thm:BB-Peterson},
\[
\lim_{t\to0}\lambda(t)\cdot x=p_J,
\qquad
\lim_{t\to\infty}\lambda(t)\cdot x=p_I.
\]
If $\langle w_J(\chi),\lambda\rangle=0,$
then, by the preceding fixed-point calculation, \(p_J\) is
semistable. Since \(p_J\) lies in the closure of \(G_0\cdot x\) and
\(x\neq p_J\), the orbit \(G_0\cdot x\) is not closed in the
semistable locus. Similarly, if
\[
\langle w_I(\chi),\lambda\rangle=0,
\]
then the semistable fixed point \(p_I\) lies in the closure of
\(G_0\cdot x\), so again the orbit is not closed.

Thus a nonfixed semistable point is polystable precisely when
\[
\langle w_I(\chi),\lambda\rangle>0,
\qquad
\langle w_J(\chi),\lambda\rangle<0,
\]
that is, precisely when it is stable. Consequently,
\[
(\mathrm{Pet}_n)^{ps}_{G_0}(\mathcal L(\chi))
=
(\mathrm{Pet}_n)^s_{G_0}(\mathcal L(\chi))
\sqcup
\bigsqcup_{\substack{I\subseteq S\\
\langle w_I(\chi),\lambda\rangle=0}}
Y^0_{I,I}.
\]
\end{proof}

We record two immediate consequences of the preceding description.
The first shows that the quotient always has the expected dimension,
while the second characterizes those linearizations for which the
GIT quotient is geometric on the whole semistable locus.

\begin{proposition}\label{lem:stability-nonempty}
Let $\chi$ be a regular dominant character of $T$ in the root lattice. Then
\[
(\mathrm{Pet}_n)^s_{G_0}(\mathcal L(\chi))\neq\varnothing.
\]
Consequently,
\[
\dim\left(
(\mathrm{Pet}_n)^{ss}_{G_0}(\mathcal L(\chi))
\sslash G_0
\right)
=
n-2.
\]
\end{proposition}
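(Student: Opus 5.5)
Using Lemma~\ref{lem:stability-strata}, it suffices to exhibit one nonempty stratum $Y^0_{I,J}$ with $\langle w_I(\chi),\lambda\rangle>0$ and $\langle w_J(\chi),\lambda\rangle<0$. The natural candidate is $I=S$, $J=\varnothing$. Here $w_\varnothing=e$, so $\langle\chi,\lambda\rangle<0$ because $\chi$ is regular dominant, a positive combination of positive roots (it lies in the root lattice, and dominant weights are nonnegative rational combinations of simple roots with positive coefficients when regular), and $\langle\beta,\lambda\rangle=-\operatorname{ht}(\beta)<0$ for $\beta\in R^+$. Likewise $w_S=w_0$ sends $\chi$ to $w_0(\chi)$, which is a negative combination of simple roots, so $\langle w_0(\chi),\lambda\rangle>0$. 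Concretely, writing $\chi=\sum m_j\varpi_j$ with $m_j>0$, one gets $\langle\chi,\lambda\rangle=-\sum_j m_j\langle\varpi_j,-\lambda\rangle$, and each $\langle\varpi_j,-\lambda\rangle>0$ since $-\lambda$ is strictly dominant. For $w_0(\chi)=-\sum m_j\varpi_{n-j}$ the sign is reversed.

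Next I would show $Y^0_{S,\varnothing}\neq\varnothing$. By Proposition~\ref{thm:BB-Peterson}, $Y^0_{S,\varnothing}=X_S^0\cap\Omega_\varnothing^0$. The set $X_S^0\simeq\mathbb A^{n-1}$ is open dense in the irreducible $\mathrm{Pet}_n$, being the top cell of the paving, since the complement $\bigsqcup_{J\subsetneq S}X_J^0$ is closed by \eqref{eq:peterson-cell-closure}. Similarly, $\Omega_\varnothing^0=W_\varnothing^+$ is the intersection of $\mathrm{Pet}_n$ with the open cell $B^-B/B$. It contains $p_\varnothing$ and is hence a nonempty open subset of $\mathrm{Pet}_n$. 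Two nonempty opens in an irreducible variety meet, so the stratum is nonempty. (Alternatively, one could use the stated fact $X_I\cap\Omega_J\ne\varnothing$ for $J\subseteq I$ together with openness.) This open density is the only point requiring care, and I expect it to be the main, though mild, obstacle. The fact that $\Omega^0_\varnothing$ is open uses that $B^-B/B$ is open in $G/B$.

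For the dimension, note that the stable locus is then a nonempty open subset of $\mathrm{Pet}_n$, because it contains the dense open $Y^0_{S,\varnothing}$ (an intersection of two dense opens). By Mumford's theorem, $\pi$ restricted to the stable locus is a geometric quotient onto an open $U$ of the quotient. Stable points have finite stabilizers, so the fibres are one-dimensional orbits. Hence $\dim U=\dim\mathrm{Pet}_n-1=n-2$. The quotient is irreducible as the image of the irreducible semistable locus, so its dimension is $n-2$.
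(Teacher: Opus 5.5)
Your proof is correct and follows the paper's argument essentially step for step. Like the paper, you show that $Y^0_{S,\varnothing}=\mathrm{Pet}_n\cap Bw_0B/B\cap B^-B/B$ is nonempty as an intersection of two nonempty opens in the irreducible $\mathrm{Pet}_n$, check $\langle\chi,\lambda\rangle<0<\langle w_0(\chi),\lambda\rangle$ to get stability, and count dimensions via the geometric quotient of the stable locus. Your remark that the quotient is irreducible, being the image of the irreducible semistable locus, supplies a step the paper leaves implicit.
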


\begin{proof}
The subsets
\[
\mathrm{Pet}_n\cap Bw_0B/B
\qquad\text{and}\qquad
\mathrm{Pet}_n\cap B^-B/B
\]
are nonempty open subsets of \(\mathrm{Pet}_n\). Since
\(\mathrm{Pet}_n\) is irreducible, their intersection is nonempty.

Let $x\in
\mathrm{Pet}_n\cap Bw_0B/B\cap B^-B/B.$ Equivalently, $x\in Y^0_{S,\emptyset}.$

By the evaluation of the Hilbert-Mumford numerical functions on the Richardson strata, we have:
\[
\mu^{\mathcal L(\chi)}(x,\lambda)
=
-\langle\chi,\lambda\rangle,
\qquad
\mu^{\mathcal L(\chi)}(x,-\lambda)
=
\langle w_0(\chi),\lambda\rangle.
\]
Since \(\chi\) is regular dominant and \(\lambda\) is strictly
antidominant, we have $\langle\chi,\lambda\rangle<0.$
Moreover, by the Weyl group invariance of the pairing,
\[
\langle w_0(\chi),\lambda\rangle
=
\langle\chi,w_0^{-1}(\lambda)\rangle
=
\langle\chi,w_0(\lambda)\rangle>0,
\]
because \(w_0(\lambda)\) is strictly dominant. Hence
\[
\mu^{\mathcal L(\chi)}(x,\lambda)>0,
\qquad
\mu^{\mathcal L(\chi)}(x,-\lambda)>0.
\]
Since \(G_0\simeq\mathbb G_m\), these two inequalities give the
Hilbert--Mumford criterion for stability. Thus $x\in(\mathrm{Pet}_n)^s_{G_0}(\mathcal L(\chi)),$
and the stable locus is nonempty. The stable locus is open in the irreducible variety
\(\mathrm{Pet}_n\), and therefore
\[
\dim(\mathrm{Pet}_n)^s_{G_0}(\mathcal L(\chi))
=
\dim\mathrm{Pet}_n
=
n-1.
\]
Every stable point has finite stabilizer, so every \(G_0\)-orbit in
the stable locus has dimension \(1\). The geometric quotient $(\mathrm{Pet}_n)^s_{G_0}(\mathcal L(\chi))/G_0$ is a nonempty open subset of the GIT quotient $(\mathrm{Pet}_n)^{ss}_{G_0}(\mathcal L(\chi))\sslash G_0.$
Consequently,
\[
\dim\left(
(\mathrm{Pet}_n)^{ss}_{G_0}(\mathcal L(\chi))
\sslash G_0
\right)
=
(n-1)-1
=
n-2.
\]
\end{proof}

\begin{proposition}\label{lem:ss-equals-s}
Let \(\chi\) be a regular dominant root-lattice character. Then
\[
(\mathrm{Pet}_n)^{ss}_{G_0}(\mathcal L(\chi))
=
(\mathrm{Pet}_n)^s_{G_0}(\mathcal L(\chi))
\]
if and only if
\[
\langle w_I(\chi),\lambda\rangle\neq0
\qquad
\text{for every }I\subseteq S.
\]
\end{proposition}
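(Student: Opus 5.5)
We prove both implications by comparing the descriptions of the semistable and stable loci in Lemma~\ref{lem:stability-strata}. Both loci are disjoint unions of Richardson strata \(Y^0_{I,J}\) with \(J\subseteq I\). The conditions defining them differ only in whether the inequalities on \(\langle w_I(\chi),\lambda\rangle\) and \(\langle w_J(\chi),\lambda\rangle\) are weak or strict.

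For the implication ``\(\Leftarrow\)'', assume \(\langle w_I(\chi),\lambda\rangle\neq0\) for every \(I\subseteq S\). Then for every pair \(J\subseteq I\) the weak inequalities
\[
\langle w_I(\chi),\lambda\rangle\ge0,\qquad\langle w_J(\chi),\lambda\rangle\le0
\]
are equivalent to the strict ones. So the index sets in the two unions of Lemma~\ref{lem:stability-strata} coincide, and the semistable and stable loci are equal. This direction is immediate and should only need one or two sentences.

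For the implication ``\(\Rightarrow\)'', we argue by contraposition. Suppose \(\langle w_{I_0}(\chi),\lambda\rangle=0\) for some \(I_0\subseteq S\). We use the fixed point \(p_{I_0}\), which forms the stratum \(Y^0_{I_0,I_0}\).

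\begin{enumerate}
\item By the fixed-point computation in the proof of Lemma~\ref{lem:stability-strata}, both numerical functions \(\mu^{\mathcal L(\chi)}(p_{I_0},\pm\lambda)\) vanish. Hence \(p_{I_0}\) is semistable by Corollary~\ref{HM-corollary}.
\item The same vanishing shows that \(p_{I_0}\) is not stable, since Corollary~\ref{HM-corollary} requires strict positivity for stability. Alternatively, \(p_{I_0}\) is fixed by \(G_0\), so its stabilizer is all of \(G_0\simeq\mathbb G_m\), which is not finite.
\end{enumerate}

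Thus \(p_{I_0}\) lies in the semistable locus but not the stable locus, so the two loci differ.

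We expect no real obstacle. The only point to check is that \(Y^0_{I_0,I_0}=\{p_{I_0}\}\) is genuinely nonempty, so that the offending stratum actually contributes a point. This is recorded in Lemma~\ref{lem:stability-strata} and Proposition~\ref{thm:BB-Peterson}, and we will cite it there.
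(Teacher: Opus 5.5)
Your proposal is correct and follows essentially the same route as the paper. The paper also exhibits the fixed point \(p_I=Y^0_{I,I}\) with \(\langle w_I(\chi),\lambda\rangle=0\) as a semistable but non-stable point. For the converse it likewise observes that the weak Hilbert--Mumford inequalities on a semistable stratum \(Y^0_{I,J}\) become strict once no pairing vanishes; your alternative remark that \(p_{I_0}\) has infinite stabilizer \(G_0\) is an equally valid way to see non-stability.
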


\begin{proof}
Suppose first that $\langle w_I(\chi),\lambda\rangle=0$
for some \(I\subseteq S\). The fixed point $p_I=w_IB/B$
is the Peterson--Richardson stratum $Y^0_{I,I}=\{p_I\}.$
At this point,
\[
\mu^{\mathcal L(\chi)}(p_I,\lambda)
=
-\langle w_I(\chi),\lambda\rangle=0,
\]
and
\[
\mu^{\mathcal L(\chi)}(p_I,-\lambda)
=
\langle w_I(\chi),\lambda\rangle=0.
\]
Hence \(p_I\) is semistable but not stable. Therefore
\[
(\mathrm{Pet}_n)^{ss}_{G_0}(\mathcal L(\chi))
\neq
(\mathrm{Pet}_n)^s_{G_0}(\mathcal L(\chi)).
\]

Conversely, suppose that $\langle w_I(\chi),\lambda\rangle\neq0$ for every $I\subseteq S$, and let $x\in(\mathrm{Pet}_n)^{ss}_{G_0}(\mathcal L(\chi)).$
Then \(x\in Y^0_{I,J}\) for some \(J\subseteq I\subseteq S\), and
semistability gives
\[
\langle w_I(\chi),\lambda\rangle\ge0,
\qquad
\langle w_J(\chi),\lambda\rangle\le0.
\]
By hypothesis neither pairing is zero. Thus
\[
\langle w_I(\chi),\lambda\rangle>0,
\qquad
\langle w_J(\chi),\lambda\rangle<0.
\]
Equivalently,
\[
\mu^{\mathcal L(\chi)}(x,-\lambda)>0,
\qquad
\mu^{\mathcal L(\chi)}(x,\lambda)>0.
\]
Hence \(x\) is stable. Therefore $(\mathrm{Pet}_n)^{ss}_{G_0}(\mathcal L(\chi))
=
(\mathrm{Pet}_n)^s_{G_0}(\mathcal L(\chi)).$
\end{proof}

\begin{remark}\label{rem:geometric-quotient}
Under the equivalent conditions of
Proposition~\ref{lem:ss-equals-s}, every semistable point is stable.
Consequently, every semistable orbit is closed in the semistable
locus and has finite stabilizer, and hence
\[
(\mathrm{Pet}_n)^{ss}_{G_0}(\mathcal L(\chi))
\sslash G_0
\]
is the geometric quotient of the semistable locus.

Let
\[
\pi:
(\mathrm{Pet}_n)^{ss}_{G_0}(\mathcal L(\chi))
\longrightarrow
(\mathrm{Pet}_n)^{ss}_{G_0}(\mathcal L(\chi))
\sslash G_0
\]
be the quotient map. If \(x\) is a smooth stable point, then Luna's
slice theorem describes the quotient \'etale-locally near
\(\pi(x)\) as the quotient of a smooth slice by the finite stabilizer
\((G_0)_x\). Thus any singularity of the quotient arising over the
smooth locus is a finite quotient singularity. Since
\(G_0\simeq\mathbb G_m\), every finite stabilizer is a cyclic group
\(\mu_m\) for some \(m\ge1\). In particular, if \(x\) is smooth and has trivial stabilizer, then
the quotient is smooth at \(\pi(x)\). 
\end{remark}

\subsection{Maximal-parabolic inequalities and the deep chamber}

We now isolate the region of the dominant cone for which every
semistable point lies in the Peterson big cell. By the monotonicity
of the quantities
\(\langle w_I(\chi),\lambda\rangle\), it is enough to analyze the
maximal proper subsets. 
We begin with an explicit computation of the corresponding linear
forms. Let
\[
\mathcal D
=
\{\chi\in Q_{\mathbb R}:\chi\text{ is regular dominant}\},
\]
and for \(1\le r\le n-1\) set
\[
K_r:=S\setminus\{\alpha_r\}.
\]

\begin{lemma}\label{lem:maximal-parabolic-weights}
For \(1\le r,j\le n-1\),
\[
\bigl\langle w_{K_r}(\varpi_j),\lambda\bigr\rangle
=
\begin{cases}
\dfrac{j(2r-n-j)}2, & j\le r,\\[6pt]
\dfrac{(n-j)(j-2r)}2, & j>r.
\end{cases}
\]
Equivalently,
\[
\bigl\langle w_{K_r}(\varpi_j),\lambda\bigr\rangle
=
\begin{cases}
\dfrac{j(n-j)}2-j(n-r), & j\le r,\\[6pt]
\dfrac{j(n-j)}2-r(n-j), & j>r.
\end{cases}
\]
\end{lemma}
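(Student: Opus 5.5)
Since the pairing is $W$-invariant and $\lambda$ is a cocharacter, the plan is to write $\varpi_j$ in coordinates, apply the permutation $w_{K_r}$, and pair with $\lambda=(1,2,\ldots,n)$. We have $\langle \epsilon_i,\lambda\rangle=i$. Write $\varpi_j=\epsilon_1+\cdots+\epsilon_j-\frac{j}{n}\sum_i\epsilon_i$. The trace part is $W$-invariant, so
\[
\Bigl\langle w\Bigl(\tfrac jn\textstyle\sum_i\epsilon_i\Bigr),\lambda\Bigr\rangle=\tfrac jn\cdot\tfrac{n(n+1)}2=\tfrac{j(n+1)}2
\]
for every $w\in W$. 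It therefore remains to compute $\langle w_{K_r}(\epsilon_1+\cdots+\epsilon_j),\lambda\rangle=\sum_{i\le j}w_{K_r}(i)$, since $w(\epsilon_i)=\epsilon_{w(i)}$.

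Next, identify $w_{K_r}$. It is the longest element of $S_r\times S_{n-r}$: it reverses $\{1,\dots,r\}$ and reverses $\{r+1,\dots,n\}$. Thus $w_{K_r}(i)=r+1-i$ for $i\le r$ and $w_{K_r}(i)=n+r+1-i$ for $i>r$. This agrees with the block description of $w_I$ recalled in Section~2 with block sizes $(r,n-r)$.

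Now split into the two cases.

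If $j\le r$, then
\[
\sum_{i\le j}(r+1-i)=j(r+1)-\tfrac{j(j+1)}2 .
\]
Subtracting $\tfrac{j(n+1)}2$ gives $jr-\tfrac{j(j+n)}{2}=\tfrac{j(2r-n-j)}2$.

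If $j>r$, the first $r$ indices contribute $\tfrac{r(r+1)}2$. The indices $i=r+1,\dots,j$ map to $n,n-1,\dots,n+r+1-j$, contributing $\tfrac{(j-r)(2n+r+1-j)}2$.
Subtracting $\tfrac{j(n+1)}2$ and expanding, the total simplifies to
\[
\tfrac12\bigl(2nj-2nr-j^2+2rj\bigr)-\ldots
\]
which we will rearrange to $\tfrac{(n-j)(j-2r)}2$. The cross-check is at $j=r$, where both formulas give $-\tfrac{r(n-r)}2$.

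The equivalent second form is pure algebra. For $j\le r$: $\tfrac{j(n-j)}2-j(n-r)=\tfrac{j(2r-n-j)}2$. For $j>r$: $\tfrac{j(n-j)}2-r(n-j)=\tfrac{(n-j)(j-2r)}2$. The only real risk is the sign and ordering convention, namely whether $w$ acts on $\epsilon_i$ by $\epsilon_{w(i)}$ or by $\epsilon_{w^{-1}(i)}$. Here it does not matter, because $w_{K_r}$ is an involution.
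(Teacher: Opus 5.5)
Your proposal is correct and is essentially the paper's own proof: you expand $\varpi_j$, apply $w_{K_r}$ as the reversal of the blocks $\{1,\dots,r\}$ and $\{r+1,\dots,n\}$, and sum the resulting indices against $\lambda$. The only loose end is the ellipsis in the case $j>r$. It closes as expected, since $\tfrac{r(r+1)}2+\tfrac{(j-r)(2n+r+1-j)}2-\tfrac{j(n+1)}2=\tfrac12\bigl(nj+2rj-j^2-2nr\bigr)=\tfrac{(n-j)(j-2r)}2$, so write this step out rather than relying on the agreement of the two formulas at $j=r$.
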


\begin{proof}
Recall that
\[
\varpi_j
=
\epsilon_1+\cdots+\epsilon_j
-\frac{j}{n}\sum_{i=1}^n\epsilon_i,
\qquad
\langle\epsilon_i,\lambda\rangle=i,
\]
and
\[
w_{K_r}
=
(r,r-1,\ldots,1,n,n-1,\ldots,r+1).
\]

If \(j\le r\), then
\[
w_{K_r}(\epsilon_1+\cdots+\epsilon_j)
=
\epsilon_r+\cdots+\epsilon_{r-j+1},
\]
so
\[
\begin{aligned}
\bigl\langle w_{K_r}(\varpi_j),\lambda\bigr\rangle
&=
\sum_{\ell=0}^{j-1}(r-\ell)
-\frac{j(n+1)}2  \\
&=
\frac{j(2r-n-j)}2.
\end{aligned}
\]

If \(j>r\), then
\[
w_{K_r}(\epsilon_1+\cdots+\epsilon_j)
=
\epsilon_1+\cdots+\epsilon_r
+\epsilon_n+\cdots+\epsilon_{n-j+r+1},
\]
and hence
\[
\begin{aligned}
\bigl\langle w_{K_r}(\varpi_j),\lambda\bigr\rangle
&=
\frac{r(r+1)}2
+n(j-r)
-\frac{(j-r-1)(j-r)}2
-\frac{j(n+1)}2 \\
&=
\frac{(n-j)(j-2r)}2.
\end{aligned}
\]
The equivalent forms follow by elementary simplification.
\end{proof}

\begin{lemma}\label{lem:maximal-parabolic-walls}
For \(1\le r\le n-1\),
\[
\langle w_{K_r}(\chi),\lambda\rangle<0
\qquad
\text{for every }\chi\in\mathcal D
\]
if and only if
\[
n-1\le2r\le n+1.
\]
Equivalently, the hyperplane
\[
H_{K_r}
=
\{\chi\in Q_{\mathbb R}:
\langle w_{K_r}(\chi),\lambda\rangle=0\}
\]
meets \(\mathcal D\) if and only if
\[
2r<n-1
\qquad\text{or}\qquad
2r>n+1.
\]
\end{lemma}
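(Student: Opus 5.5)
The plan is to use linearity in $\chi$: write $\chi=\sum_j m_j\varpi_j$ with all $m_j>0$. Then $\langle w_{K_r}(\chi),\lambda\rangle=\sum_j m_j c_{r,j}$, where $c_{r,j}:=\langle w_{K_r}(\varpi_j),\lambda\rangle$ is given explicitly by Lemma~\ref{lem:maximal-parabolic-weights}. Since $\mathcal D$ is the open cone of such combinations, the form is negative on all of $\mathcal D$ if and only if every coefficient satisfies $c_{r,j}\le 0$ and not all of them vanish.

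For the ``if and only if'' itself: if some $c_{r,j_0}>0$, I will take $m_{j_0}$ large and the other $m_j$ small. This produces a $\chi\in\mathcal D$ with positive pairing. Since the form also takes negative values on $\mathcal D$ (shown below), the hyperplane $H_{K_r}$ meets the open convex cone $\mathcal D$ by continuity. Conversely, if all $c_{r,j}\le0$ and at least one is strictly negative, the pairing is strictly negative throughout $\mathcal D$. This also gives the equivalence with the second formulation. When $H_{K_r}$ meets $\mathcal D$ the form cannot be everywhere negative. If it is not everywhere negative, then either some coefficient is positive, and we are in the previous case, or all coefficients vanish, which I will exclude by exhibiting a negative coefficient.

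The remaining step is a sign analysis of the two formulas. For $j\le r$, $c_{r,j}=\tfrac{j(2r-n-j)}2$, which is $\le0$ for all $1\le j\le r$ exactly when $2r-n-1\le 0$, since the worst case is $j=1$; that is, when $2r\le n+1$. For $j>r$, $c_{r,j}=\tfrac{(n-j)(j-2r)}2$. Here $n-j>0$ for $j\le n-1$, so we need $j\le 2r$ for all $r<j\le n-1$, i.e.\ $n-1\le 2r$. This condition is vacuous when $r=n-1$, and indeed then $2r\ge n-1$ holds automatically. Likewise, for $r=1$ the first condition reduces to $2\le n+1$. Combining the two cases gives exactly $n-1\le 2r\le n+1$.

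Finally, I need a strictly negative coefficient. Take $c_{r,1}=\tfrac{2r-n-1}{2}$ when $2r<n+1$. When $2r=n+1$, use $j=r$ if $r\ge2$, for which $c_{r,r}=\tfrac{r(r-n)}2<0$, or check $n=1$ separately. In general $j=r$ gives $r(r-n)/2<0$ for $1\le r\le n-1$. The main obstacle is only keeping the boundary cases $r=1$, $r=n-1$, and small $n$ straight; otherwise the argument is a direct reading of Lemma~\ref{lem:maximal-parabolic-weights}.
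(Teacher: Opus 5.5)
Your proof is correct and follows essentially the same route as the paper: expand $\chi=\sum_j m_j\varpi_j$ with $m_j>0$, use the coefficients from Lemma~\ref{lem:maximal-parabolic-weights}, and read off their signs, with convexity of $\mathcal D$ giving the hyperplane reformulation. The one difference is your witness for a strictly negative coefficient. You use $c_{r,r}=r(r-n)/2<0$, whereas the paper uses $c_1(r)+c_{n-1}(r)=-1$, which it states only for $r<n-1$; your choice is slightly cleaner and also covers $r=n-1$ without a separate case.
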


\begin{proof}
Write
\[
\chi=\sum_{j=1}^{n-1}m_j\varpi_j,
\qquad m_j>0.
\]
By Lemma~\ref{lem:maximal-parabolic-weights},
\[
\langle w_{K_r}(\chi),\lambda\rangle
=
\sum_{j=1}^{n-1}m_jc_j(r),
\]
where
\[
c_j(r)=
\begin{cases}
\dfrac{j(2r-n-j)}2, & j\le r,\\[6pt]
\dfrac{(n-j)(j-2r)}2, & j>r.
\end{cases}
\]

For \(j\le r\), all \(c_j(r)\le0\) if and only if $2r\le n+1,$ while for \(j>r\), all \(c_j(r)\le0\) if and only if $n-1\le2r.$ 
Thus all coefficients are non-positive precisely when
\[
n-1\le2r\le n+1.
\]
They cannot all vanish, since
\[
c_1(r)=r-\frac{n+1}{2},
\qquad
c_{n-1}(r)=\frac{n-1}{2}-r
\]
whenever \(r<n-1\), and these have sum \(-1\). Hence $\langle w_{K_r}(\chi),\lambda\rangle<0$ for every \(m_j>0\).

Conversely, if \(2r<n-1\), then
\[
c_1(r)<0<c_{n-1}(r),
\]
whereas if \(2r>n+1\), then
\[
c_{n-1}(r)<0<c_1(r).
\]
Thus the linear form
\[
\chi\longmapsto
\langle w_{K_r}(\chi),\lambda\rangle
\]
takes both signs on the convex cone \(\mathcal D\), so its zero
hyperplane meets \(\mathcal D\).
\end{proof}

The preceding computation allows us to characterize the region in
which all proper Peterson cells are unstable.

\begin{proposition}\label{prop:deep-chamber-characterization}
Let \(\chi\) be a regular dominant character of \(T\) lying in the
root lattice. Then the following are equivalent:
\begin{enumerate}
\item
\[
(\mathrm{Pet}_n)^{ss}_{G_0}(\mathcal L(\chi))
\subseteq
\mathrm{Pet}_n\cap Bw_0B/B;
\]

\item
\[
\langle w_I(\chi),\lambda\rangle<0
\qquad\text{for every }I\subsetneq S;
\]

\item
\[
\langle w_{K_r}(\chi),\lambda\rangle<0
\qquad(1\le r\le n-1).
\]
\end{enumerate}
\end{proposition}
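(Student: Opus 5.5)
We prove the cycle (2)$\Rightarrow$(1)$\Rightarrow$(3)$\Rightarrow$(2), using Lemma~\ref{lem:stability-strata} throughout together with a monotonicity property of $I\mapsto\langle w_I(\chi),\lambda\rangle$.

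\emph{(2)$\Rightarrow$(1).} Let $x$ be semistable, and say $x\in Y^0_{I,J}$ with $J\subseteq I$. By Lemma~\ref{lem:stability-strata} we have $\langle w_I(\chi),\lambda\rangle\ge 0$. If $I\subsetneq S$, this contradicts (2). Hence $I=S$, and since $w_S=w_0$, we get $x\in Bw_0B/B\cap\mathrm{Pet}_n$.

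\emph{(1)$\Rightarrow$(3).} Suppose instead that $\langle w_{K_r}(\chi),\lambda\rangle\ge0$ for some $r$. Consider the stratum $Y^0_{K_r,\emptyset}$. It is nonempty: by the closure relation \eqref{eq:peterson-cell-closure}, $X_{K_r}$ is an irreducible closed subvariety meeting both the open cell $Bw_{K_r}B/B$ and the open subset $B^-B/B$, since it contains $p_\emptyset$. The same irreducibility argument as in Proposition~\ref{lem:stability-nonempty} then gives $Y^0_{K_r,\emptyset}\neq\varnothing$. On this stratum $\langle w_\emptyset(\chi),\lambda\rangle=\langle\chi,\lambda\rangle<0$ and $\langle w_{K_r}(\chi),\lambda\rangle\ge0$. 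So its points are semistable by Lemma~\ref{lem:stability-strata}, yet they lie in $Bw_{K_r}B/B$, which is disjoint from $Bw_0B/B$. This contradicts (1). Alternatively, one can take the fixed point $p_{K_r}$ if the pairing is $0$, and the stratum $Y^0_{K_r,\emptyset}$ otherwise.

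\emph{(3)$\Rightarrow$(2).} This is the main step. Every $I\subsetneq S$ lies in some $K_r$, so it suffices to show that $J\subseteq I$ implies $\langle w_J(\chi),\lambda\rangle\ge\langle w_I(\chi),\lambda\rangle$. The plan is to write $w_I=w_J\,u$, where $u=w_J w_I\in W_I$ and $\ell(w_I)=\ell(w_J)+\ell(u)$. Then
\[
w_J(\chi)-w_I(\chi)=w_J\bigl(\chi-u(\chi)\bigr).
\]
Since $\chi$ is dominant, $\chi-u(\chi)$ is a nonnegative combination of positive roots in the root subsystem of $I$, and these roots satisfy $u^{-1}$-type inversion properties. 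A cleaner route is the standard fact that $v\le w$ in Bruhat order and $\chi$ dominant imply $v(\chi)-w(\chi)\in\mathbb Z_{\ge0}R^+$. To see this, induct along a chain $v<vs_\beta=w$: here $v(\chi)-vs_\beta(\chi)=\langle\chi,\beta^\vee\rangle\,v(\beta)$, and $v(\beta)>0$ because $v<vs_\beta$.

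Pairing with $\lambda$, which is negative on $R^+$, gives $\langle w_J(\chi),\lambda\rangle\le\langle w_I(\chi),\lambda\rangle$. This is the wrong direction, so the ordering must be rechecked. For $J\subseteq I$ we have $w_J\le w_I$, hence $w_J(\chi)\ge w_I(\chi)$ in dominance order, so $\langle w_J(\chi),\lambda\rangle\le\langle w_I(\chi),\lambda\rangle$. Thus the pairing \emph{increases} with $I$. Consequently, for $I\subseteq K_r$ we get $\langle w_I(\chi),\lambda\rangle\le\langle w_{K_r}(\chi),\lambda\rangle<0$, which is exactly the needed monotonicity. The main obstacle is getting these sign conventions right; the Bruhat-chain argument settles it.
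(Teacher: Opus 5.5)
Your proof is correct and is essentially the paper's argument: (2)$\Rightarrow$(1) follows from the stratum-wise semistability criterion, the failure of (3) is detected by the nonempty semistable stratum $Y^0_{K_r,\emptyset}$ lying outside $Bw_0B/B$, and (3)$\Rightarrow$(2) follows from the fact that $I\mapsto\langle w_I(\chi),\lambda\rangle$ increases under inclusion. Your Bruhat-chain identity $v(\chi)-vs_\beta(\chi)=\langle\chi,\beta^\vee\rangle\,v(\beta)$ justifies this monotonicity, which the paper only asserts. You should, however, delete the opening sentence of that step, which states the inequality in the wrong direction before you correct it.
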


\begin{proof}
We know that the Peterson variety is stratified by Richardson strata indexed by $J \subseteq I \subseteq S$:
\[ \mathrm{Pet}_{n} = \bigsqcup_{J \subseteq I \subseteq S} Y^{0}_{I,J} \]
An element $x \in Y^0_{I,J}$ belongs to $Bw_{0}B/B$ if and only if $I = S$, since $w_{S} = w_0$ is the longest element of the Weyl group. 
 We first show that \(Y^0_{I,\emptyset}\) is nonempty for every \(I\subseteq S\).
Recall that
\[
Y^0_{I,\emptyset}
=
X_I^0\cap B^-B/B.
\]
By the Peterson-cell closure relation,
\[
X_I=\overline{X_I^0}
=
\bigsqcup_{J\subseteq I}X_J^0.
\]
Since \(\emptyset\subseteq I\), the fixed point $p_\emptyset=eB/B$
belongs to \(X_I\). On the other hand, \(B^-B/B\) is an open
neighbourhood of \(p_\emptyset\) in \(G/B\). Therefore $X_I\cap B^-B/B$ is a nonempty open subset of the irreducible variety \(X_I\).
Since \(X_I^0\) is dense in \(X_I\), this open subset meets \(X_I^0\).
Hence
\[
Y^0_{I,\emptyset}
=
X_I^0\cap B^-B/B
\neq\emptyset.
\]
Assume first that
\[
(\mathrm{Pet}_n)^{ss}_{G_0}(\mathcal L(\chi))
\subseteq
\mathrm{Pet}_n\cap Bw_0B/B.
\]
Suppose, to the contrary, that there exists a proper subset
\(I\subsetneq S\) such that $\langle w_I(\chi),\lambda\rangle\ge0.$ Since $Y^0_{I,\emptyset}\neq\emptyset$, for every \(x\in Y^0_{I,\emptyset}\), we have 
\[
\mu^{\mathcal L(\chi)}(x,\lambda)
=
-\langle\chi,\lambda\rangle
\]
and
\[
\mu^{\mathcal L(\chi)}(x,-\lambda)
=
\langle w_I(\chi),\lambda\rangle.
\]
Since \(\chi\) is regular dominant and \(\lambda\) is strictly
antidominant, we have $\langle\chi,\lambda\rangle<0.$
Thus
\[
\mu^{\mathcal L(\chi)}(x,\lambda)>0,
\qquad
\mu^{\mathcal L(\chi)}(x,-\lambda)\ge0,
\]
so every point of \(Y^0_{I,\emptyset}\) is semistable.

However, $Y^0_{I,\emptyset}\subseteq Bw_IB/B,$
and since \(I\neq S\), the Schubert cells \(Bw_IB/B\) and
\(Bw_0B/B\) are disjoint. Hence \(Y^0_{I,\emptyset}\) contains a
semistable point outside \(Bw_0B/B\), contradicting (1). Therefore
\[
\langle w_I(\chi),\lambda\rangle<0
\qquad
\text{for every }I\subsetneq S.
\]
Thus \((1)\Rightarrow(2)\).

Conversely, suppose that (2) holds and let $x\in(\mathrm{Pet}_n)^{ss}_{G_0}(\mathcal L(\chi)).$
So \(x\) belongs to a unique
stratum
\[
Y^0_{I,J},
\qquad
J\subseteq I\subseteq S.
\]
Then semistability implies that 
\[
\langle w_I(\chi),\lambda\rangle\ge0.
\]
If \(I\subsetneq S\), this contradicts (2). Hence necessarily $I=S.$ Therefore
\[
x\in Y^0_{S,J}\subseteq Bw_0B/B,
\]
and so
\[
(\mathrm{Pet}_n)^{ss}_{G_0}(\mathcal L(\chi))
\subseteq
\mathrm{Pet}_n\cap Bw_0B/B.
\]
Thus \((2)\Rightarrow(1)\).

Since $K_r=S\setminus\{\alpha_r\}$ is a proper subset of \(S\) for every \(1\le r\le n-1\), the
implication
\[
(2)\Rightarrow(3)
\]
is immediate.

Finally, suppose that (3) holds and let \(I\subsetneq S\). Since
\(I\) is proper, there exists some \(r\in\{1,\ldots,n-1\}\) such that $\alpha_r\notin I$. Hence $I\subseteq K_r.$ Then $w_I\le w_{K_r}$ in the Bruhat order and since \(\chi\) is dominant, we have  
\[
\langle w_I(\chi),\lambda\rangle
\le
\langle w_{K_r}(\chi),\lambda\rangle
<0.
\]
Thus $\langle w_I(\chi),\lambda\rangle<0$ for every $I\subsetneq S,$
which proves \((3)\Rightarrow(2)\).
\end{proof}

\begin{definition}\label{def:deep-chamber}
We define the \emph{deep GIT chamber} to be 
\[
\mathcal C_{\mathrm{deep}}^\lambda
:=
\left\{
\chi\in\mathcal D:
\langle w_{K_r}(\chi),\lambda\rangle<0
\text{ for every }1\le r\le n-1
\right\}.
\]
By Proposition~\ref{prop:deep-chamber-characterization},
\[
\mathcal C_{\mathrm{deep}}^\lambda
=
\left\{
\chi\in\mathcal D:
\langle w_I(\chi),\lambda\rangle<0
\text{ for every }I\subsetneq S
\right\}.
\]
Thus, for a regular dominant root-lattice character \(\chi\),
\[
\chi\in\mathcal C_{\mathrm{deep}}^\lambda
\quad\Longleftrightarrow\quad
(\mathrm{Pet}_n)^{ss}_{G_0}(\mathcal L(\chi))
\subseteq
\mathrm{Pet}_n\cap Bw_0B/B.
\]
\end{definition}

If $\chi=\sum_{j=1}^{n-1}c_j\varpi_j,
 \,\, c_j>0,$
then the deep chamber is equivalently determined by the \(n-1\)
linear inequalities
\[
\sum_{j=1}^{n-1}
c_j\,
\langle w_{K_r}(\varpi_j),\lambda\rangle<0,
\qquad
1\le r\le n-1,
\]
whose coefficients are given explicitly in
Lemma~\ref{lem:maximal-parabolic-weights}.

\begin{remark}\label{rem:deep-nonempty}
The deep chamber is nonempty for every \(n\). Indeed, let
\[
\alpha_0
=
\epsilon_1-\epsilon_n
=
\varpi_1+\varpi_{n-1}
\]
be the highest root. For \(K_r=S\setminus\{\alpha_r\}\), one has $w_{K_r}(\alpha_0)=\alpha_r,$
and hence
\[
\langle w_{K_r}(\alpha_0),\lambda\rangle
=
\langle\alpha_r,\lambda\rangle
=
-1.
\]
Therefore, for every regular dominant root-lattice character \(\chi\),
\[
\langle
w_{K_r}(\chi+m\alpha_0),\lambda
\rangle
=
\langle w_{K_r}(\chi),\lambda\rangle-m<0
\]
for all \(1\le r\le n-1\) and all sufficiently large \(m\). Thus $\chi+m\alpha_0\in\mathcal C_{\mathrm{deep}}^\lambda$
for all sufficiently large \(m\), and consequently $\mathcal C_{\mathrm{deep}}^\lambda\neq\varnothing.$

For \(n=3\), the situation is stronger. If $\chi=m_1\varpi_1+m_2\varpi_2,
\qquad m_1,m_2>0,$
then
\[
\langle w_{K_1}(\chi),\lambda\rangle=-m_1,
\qquad
\langle w_{K_2}(\chi),\lambda\rangle=-m_2.
\]
Hence every regular dominant character lies in the deep chamber, so in this case we have 
\[
\mathcal C_{\mathrm{deep}}^\lambda=\mathcal D.
\]
\end{remark}

\begin{remark}\label{rem:deep-is-chamber}
The terminology ``deep GIT chamber'' is justified. Indeed,
\(\mathcal C_{\mathrm{deep}}^\lambda\) is an open convex subset of
\(\mathcal D\), being defined by finitely many strict linear
inequalities. By
Proposition~\ref{prop:deep-chamber-characterization}, for every
\(\chi\in\mathcal C_{\mathrm{deep}}^\lambda\), $\langle w_I(\chi),\lambda\rangle<0, (I\subsetneq S),$ while $\langle w_0(\chi),\lambda\rangle>0.$
Hence no GIT wall
\[
H_I
=
\{\chi\in\mathcal D:
\langle w_I(\chi),\lambda\rangle=0\}
\]
meets \(\mathcal C_{\mathrm{deep}}^\lambda\).

Thus \(\mathcal C_{\mathrm{deep}}^\lambda\) is contained in a single
GIT chamber. Conversely, throughout that chamber all the linear forms
\(\langle w_I(\,\cdot\,),\lambda\rangle\), \(I\subsetneq S\), have
the same negative sign. Proposition~\ref{prop:deep-chamber-characterization}
therefore shows that the chamber is contained in
\(\mathcal C_{\mathrm{deep}}^\lambda\). Hence
\(\mathcal C_{\mathrm{deep}}^\lambda\) is itself a GIT chamber.
\end{remark}

\subsection{The deep-chamber quotient}
We now determine the GIT quotient for a root-lattice character in the
deep chamber. The preceding characterization shows first that the
semistable locus is concentrated in the Peterson big cell; the
explicit Toeplitz coordinates on that cell then identify the quotient
with a weighted projective space.

\begin{proposition}\label{prop:deep-semistable-locus}
Let \(\chi\) be a regular dominant root-lattice character such that
\[
\chi\in\mathcal C_{\mathrm{deep}}^\lambda.
\]
Then
\[
(\mathrm{Pet}_n)^{ss}_{G_0}(\mathcal L(\chi))
=
(\mathrm{Pet}_n)^s_{G_0}(\mathcal L(\chi))
=
\bigl(\mathrm{Pet}_n\cap Bw_0B/B\bigr)
\setminus\{w_0B/B\}.
\]
\end{proposition}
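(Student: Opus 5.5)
Since \(\chi\) lies in the deep chamber, Proposition~\ref{prop:deep-chamber-characterization} gives \(\langle w_I(\chi),\lambda\rangle<0\) for every \(I\subsetneq S\). The same sign computation as in the proof of Proposition~\ref{lem:stability-nonempty} gives \(\langle w_0(\chi),\lambda\rangle=\langle \chi, w_0(\lambda)\rangle>0\). In particular \(\langle w_I(\chi),\lambda\rangle\neq 0\) for every \(I\subseteq S\). Proposition~\ref{lem:ss-equals-s} then yields the first equality \((\mathrm{Pet}_n)^{ss}_{G_0}(\mathcal L(\chi))=(\mathrm{Pet}_n)^{s}_{G_0}(\mathcal L(\chi))\). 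It therefore remains to identify this common locus with \((\mathrm{Pet}_n\cap Bw_0B/B)\setminus\{w_0B/B\}\).

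For this I will apply the stratumwise description of Lemma~\ref{lem:stability-strata}. A stratum \(Y^0_{I,J}\) with \(J\subseteq I\) is stable exactly when \(\langle w_I(\chi),\lambda\rangle>0\) and \(\langle w_J(\chi),\lambda\rangle<0\). By the signs just recorded, the first condition forces \(I=S\), and the second forces \(J\neq S\), i.e.\ \(J\subsetneq S\). Conversely, every pair \((S,J)\) with \(J\subsetneq S\) satisfies both conditions. Hence the stable locus is
\[
\bigsqcup_{J\subsetneq S} Y^0_{S,J}.
\]

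Next I compare this union with \(\mathrm{Pet}_n\cap Bw_0B/B\). That set equals \(X_S^0=\bigsqcup_{J\subseteq S}Y^0_{S,J}\), by the Richardson decomposition or Proposition~\ref{thm:BB-Peterson}. The only stratum omitted above is \(Y^0_{S,S}\). The remaining point is that \(Y^0_{S,S}=\{p_S\}=\{w_0B/B\}\). This holds because \(B^-w_0B/B=w_0B/B\) is a single point, since \(B^-w_0B=w_0Bw_0\cdot w_0B=w_0B\); alternatively it follows from the identity \(Y^0_{I,I}=\{p_I\}\) already used in Lemma~\ref{lem:stability-strata}. Removing this point gives exactly \((\mathrm{Pet}_n\cap Bw_0B/B)\setminus\{w_0B/B\}\).

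There is no real obstacle; the argument is bookkeeping on top of Lemma~\ref{lem:stability-strata}. The only points needing care are the strict positivity \(\langle w_0(\chi),\lambda\rangle>0\), which uses that \(w_0(\lambda)\) is strictly dominant and \(\chi\) is regular dominant, and the identification \(Y^0_{S,S}=\{w_0B/B\}\).
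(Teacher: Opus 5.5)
Your proposal is correct and is essentially the paper's argument. Both use Proposition~\ref{prop:deep-chamber-characterization} to get $\langle w_I(\chi),\lambda\rangle<0$ for $I\subsetneq S$, note $\langle w_0(\chi),\lambda\rangle>0$, and apply the stratumwise criterion of Lemma~\ref{lem:stability-strata} to isolate the strata $Y^0_{S,J}$ with $J\subsetneq S$, discarding $Y^0_{S,S}=\{w_0B/B\}$. The only difference is organizational: you obtain the equality of the semistable and stable loci from Proposition~\ref{lem:ss-equals-s} and then compute the stable locus, whereas the paper computes the semistable locus directly, checks that each $Y^0_{S,J}$ with $J\subsetneq S$ is stable, and checks that $w_0B/B$ is unstable.
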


\begin{proof}
We have the stratification of the Peterson variety into Richardson strata:
\[ \mathrm{Pet}_{n} = \bigsqcup_{J \subseteq I \subseteq S} Y^{0}_{I,J}. \]
A stratum $Y^{0}_{I,J}$ is semistable if and only if $\langle w_{I}(\chi), \lambda \rangle \geq 0$ and $\langle w_{J}(\chi), \lambda \rangle \leq 0$. By Proposition~\ref{prop:deep-chamber-characterization},
\[
\langle w_I(\chi),\lambda\rangle<0
\qquad
\text{for every }I\subsetneq S.
\]
Hence no Peterson--Richardson stratum \(Y^0_{I,J}\) with
\(I\subsetneq S\) can contain a semistable point. 
Therefore
\[
(\mathrm{Pet}_n)^{ss}_{G_0}(\mathcal L(\chi))
\subseteq
\bigsqcup_{J\subseteq S}Y^0_{S,J}
=
\mathrm{Pet}_n\cap Bw_0B/B.
\]

Now let \(x\in Y^0_{S,J}\). The Hilbert--Mumford weights are
\[
\mu^{\mathcal L(\chi)}(x,-\lambda)
=
\langle w_0(\chi),\lambda\rangle
\]
and
\[
\mu^{\mathcal L(\chi)}(x,\lambda)
=
-\langle w_J(\chi),\lambda\rangle.
\]
Since \(\chi\) is regular dominant and \(\lambda\) is strictly
antidominant, $\langle w_0(\chi),\lambda\rangle>0.$ If \(J\subsetneq S\), then, since
\(\chi\in\mathcal C_{\mathrm{deep}}^\lambda\),
\[
\langle w_J(\chi),\lambda\rangle<0.
\]
Thus both Hilbert--Mumford weights are strictly positive, and every
point of \(Y^0_{S,J}\) is stable.

On the other hand, $Y^0_{S,S}=\{w_0B/B\},$ and at this fixed point
\[
\mu^{\mathcal L(\chi)}(w_0B/B,\lambda)
=
-\langle w_0(\chi),\lambda\rangle<0.
\]
So \(w_0B/B\) is unstable. Combining these cases, the semistable locus is exactly the union of $Y^0_{S,J}$ for all $J \subsetneq S$. This union is precisely the intersection of $\mathrm{Pet}_n$ with the big cell, excluding the single point $w_0B/B$. Hence,
\[
(\mathrm{Pet}_n)^{ss}_{G_0}(\mathcal L(\chi))
=
(\mathrm{Pet}_n)^s_{G_0}(\mathcal L(\chi))
=
\bigl(\mathrm{Pet}_n\cap Bw_0B/B\bigr)
\setminus\{w_0B/B\}.
\]
\end{proof}

\begin{theorem}\label{thm:deep-quotient}
Let \(\chi\) be a regular dominant root-lattice character such that $\chi\in\mathcal C_{\mathrm{deep}}^\lambda.$
Then
\[
(\mathrm{Pet}_n)^{ss}_{G_0}(\mathcal L(\chi))
\sslash G_0
\cong
\mathbb P(1,2,\ldots,n-1).
\]
\end{theorem}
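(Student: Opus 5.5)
The plan is to combine Proposition~\ref{prop:deep-semistable-locus} with an explicit description of the Peterson big cell $\mathrm{Pet}_n\cap Bw_0B/B$ in Toeplitz coordinates. Under these coordinates the semistable locus becomes $\mathbb A^{n-1}\setminus\{0\}$ with a linear $\mathbb G_m$-action of weights $1,2,\ldots,n-1$, up to a global sign. The quotient of that is $\mathbb P(1,2,\ldots,n-1)$.

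First, I would parametrize the big cell. Every point of $Bw_0B/B$ is uniquely $uw_0B/B$ with $u\in U$. The condition $uw_0B\in\mathrm{Pet}_n$ reads $(uw_0)^{-1}N(uw_0)\in H_{[n-1]}$, i.e.
\[
u^{-1}Nu\in w_0H_{[n-1]}w_0=\mathfrak b^-\oplus\bigoplus_{j}\mathfrak g_{\alpha_j}.
\]
The matrix $u^{-1}Nu$ is strictly upper triangular with all superdiagonal entries equal to $1$. So the condition forces all entries above the superdiagonal to vanish, which gives $u^{-1}Nu=N$. Hence $u$ lies in the centralizer of $N$ in $U$, so
\[
u=u(a)=I+a_1N+a_2N^2+\cdots+a_{n-1}N^{n-1},\qquad a=(a_1,\ldots,a_{n-1})\in\mathbb A^{n-1}.
\]
The map $a\mapsto u(a)w_0B/B$ is then an isomorphism $\mathbb A^{n-1}\cong\mathrm{Pet}_n\cap Bw_0B/B$. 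This is consistent with $X^0_S\simeq\mathbb A^{n-1}$, and the isomorphism is inherited from $U\cong Uw_0B/B$.

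Next, I would compute the action. Since $\lambda(t)\in T$ normalizes $U$ and fixes $w_0B$, we get $\lambda(t)\cdot u(a)w_0B=(\lambda(t)u(a)\lambda(t)^{-1})w_0B$. From $\lambda(t)N^k\lambda(t)^{-1}=t^{-k}N^k$, the action on coordinates is
\[
t\cdot(a_1,\ldots,a_k,\ldots)=(t^{-1}a_1,\ldots,t^{-k}a_k,\ldots).
\]
The fixed point $w_0B/B$ corresponds to $a=0$. By Proposition~\ref{prop:deep-semistable-locus}, the semistable locus is $\mathbb A^{n-1}\setminus\{0\}$ with this linear action, and every point is stable. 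Because $\lambda$ is injective, $G_0\cong\mathbb G_m$, and replacing $t$ by $t^{-1}$ turns the weights into $1,\ldots,n-1$.

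Finally, I would identify the quotients. Since $ss=s$, Remark~\ref{rem:geometric-quotient} shows that $\pi$ is a geometric quotient, and in particular a categorical quotient, of $(\mathrm{Pet}_n)^{ss}_{G_0}(\mathcal L(\chi))$. On the other hand, $(\mathbb A^{n-1}\setminus\{0\})\to\operatorname{Proj}\mathbb C[a_1,\ldots,a_{n-1}]$ with $\deg a_k=k$ is the standard geometric quotient. It is covered by the affine quotients $D(a_k)\sslash\mathbb G_m=\operatorname{Spec}\mathbb C[a]_{(a_k)}$, and these glue. By uniqueness of categorical quotients, the GIT quotient is isomorphic to $\mathbb P(1,2,\ldots,n-1)$.

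The main obstacle I expect is the first step, namely checking carefully that $w_0H_{[n-1]}w_0$ is as claimed and that the centralizer argument gives exactly the Toeplitz unipotents. Equivalently, this is showing that the scheme-theoretic intersection with the big cell is reduced and equal to this affine space. If needed, I would fall back on $X_S^0\simeq\mathbb A^{n-1}$ from Tymoczko's paving together with the dimension count to confirm that the parametrization is an isomorphism.
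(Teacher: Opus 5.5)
Your proposal is correct and follows the paper's proof. You restrict to the semistable locus of Proposition~\ref{prop:deep-semistable-locus}, pass to Toeplitz coordinates on the big cell where $\lambda(t)$ acts by $a_k\mapsto t^{-k}a_k$ and fixes $w_0B/B$, and identify the geometric quotient of $\mathbb C^{n-1}\setminus\{\mathbf 0\}$ with $\mathbb P(1,2,\ldots,n-1)$. The only difference is that the paper cites \cite[Lemma~9(IV)]{Insko} for the Toeplitz parametrization, whereas you prove it directly (the superdiagonal of $u^{-1}Nu$ equals that of $N$, so the condition forces $u^{-1}Nu=N$ and hence $u\in\mathbb C[N]\cap U$); since $Nu=uN$ is linear in the entries of $u$, this also settles your reducedness worry, so no fallback is needed.
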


\begin{proof}
For $\mathbf a=(a_1,\ldots,a_{n-1})\in\mathbb C^{n-1},$
we set
\[
u(\mathbf a)
=
I+
\sum_{i=1}^{n-1}
a_i
\left(
\sum_{j=1}^{n-i}E_{j,j+i}
\right),
\]
where \(E_{j,k}\) denotes the elementary matrix with a \(1\) in the
\((j,k)\)-entry and zeros elsewhere. Thus \(u(\mathbf a)\) is the upper triangular unipotent Toeplitz
matrix whose \(i\)-th superdiagonal is constant with value \(a_i\).

By \cite[Lemma~9(IV)]{Insko},
\[
\mathrm{Pet}_n\cap Bw_0B/B
=
\left\{
u(\mathbf a)w_0B/B
\;\middle|\;
\mathbf a\in\mathbb C^{n-1}
\right\}.
\]
Hence Proposition~\ref{prop:deep-semistable-locus} gives
\[
(\mathrm{Pet}_n)^{ss}_{G_0}(\mathcal L(\chi))
=
\left\{
u(\mathbf a)w_0B/B
\;\middle|\;
\mathbf a\in\mathbb C^{n-1}\setminus\{\mathbf0\}
\right\},
\]
since \(w_0B/B\) corresponds to \(\mathbf a=\mathbf0\).

Thus
\[
\phi:
(\mathrm{Pet}_n)^{ss}_{G_0}(\mathcal L(\chi))
\longrightarrow
\mathbb C^{n-1}\setminus\{\mathbf0\},
\qquad
u(\mathbf a)w_0B/B\longmapsto\mathbf a,
\]
is an isomorphism.

We next compute the \(G_0\)-action in these coordinates. Since $\lambda(t)
=
\operatorname{diag}(t,t^2,\ldots,t^n),$ we have
\[
\lambda(t)E_{j,j+i}\lambda(t)^{-1}
=
t^{-i}E_{j,j+i}.
\]
Moreover,
\[
\lambda(t)w_0B/B=w_0B/B,
\]
because \(w_0^{-1}\lambda(t)w_0\in T\subseteq B\). Therefore
\[
\lambda(t)\cdot
u(a_1,\ldots,a_{n-1})w_0B/B
=
u(t^{-1}a_1,t^{-2}a_2,\ldots,t^{-(n-1)}a_{n-1})
w_0B/B.
\]
Consequently, under \(\phi\), the \(G_0\)-action becomes
\[
t\cdot(a_1,\ldots,a_{n-1})
=
(t^{-1}a_1,t^{-2}a_2,\ldots,t^{-(n-1)}a_{n-1}).
\]
Composing the parameterization of \(G_0\simeq\mathbb G_m\) with the
automorphism \(t\mapsto t^{-1}\), this is the standard weighted action
with weights
\[
1,2,\ldots,n-1.
\]

By Proposition~\ref{prop:deep-semistable-locus}, stable and semistable
loci coincide. Hence the GIT quotient is the geometric quotient of
\(\mathbb C^{n-1}\setminus\{\mathbf0\}\) by this weighted
\(\mathbb G_m\)-action. Therefore
\[
(\mathrm{Pet}_n)^{ss}_{G_0}(\mathcal L(\chi))
\sslash G_0
\cong
\bigl(\mathbb C^{n-1}\setminus\{\mathbf0\}\bigr)
\big/\mathbb G_m
\cong
\mathbb P(1,2,\ldots,n-1).
\]
\end{proof}

\begin{example}\label{ex:GL3-deep}
For \(G=\mathrm{GL}(3,\mathbb C)\) we know that $\mathcal C_{\mathrm{deep}}^\lambda=\mathcal D.$ So for every regular dominant root-lattice character
\(\chi\),
\[
(\mathrm{Pet}_3)^{ss}_{G_0}(\mathcal L(\chi))
=
(\mathrm{Pet}_3)^s_{G_0}(\mathcal L(\chi)),
\]
and Theorem~\ref{thm:deep-quotient} gives
\[
(\mathrm{Pet}_3)^{ss}_{G_0}(\mathcal L(\chi))
\sslash G_0
\cong
\mathbb P(1,2)
\cong
\mathbb P^1.
\]
Hence in this case the GIT quotient is independent of the choice of regular
dominant root-lattice character \(\chi\).
\end{example}

\begin{example}\label{exam:not-in-deep}
Let \(G=\mathrm{GL}(4,\mathbb C)\). We show that the deep GIT chamber
\(\mathcal C_{\mathrm{deep}}^\lambda\) is a proper subset of the
regular dominant cone.

Consider the character
\[
\chi
=
4\epsilon_1+3\epsilon_2+2\epsilon_3-9\epsilon_4.
\]
Then \(\chi\) lies in the root lattice and is regular
dominant. We take $K_1=S\setminus\{\alpha_1\}
=
\{\alpha_2,\alpha_3\}.$
The longest element \(w_{K_1}\) reverses the indices
\(2,3,4\), and therefore
\[
w_{K_1}(\chi)
=
4\epsilon_1-9\epsilon_2+2\epsilon_3+3\epsilon_4.
\]
Since $
\lambda(t)=\operatorname{diag}(t,t^2,t^3,t^4),$
we obtain
\[
\begin{aligned}
\langle w_{K_1}(\chi),\lambda\rangle
&=
4-18+6+12 >0 \end{aligned}
\]
Hence the defining inequality $\langle w_{K_1}(\chi),\lambda\rangle<0$
for the deep chamber fails. Thus $\chi\notin\mathcal C_{\mathrm{deep}}^\lambda.$

In particular, in this case 
\[
\mathcal C_{\mathrm{deep}}^\lambda
\subsetneq\mathcal D.
\]
As we shall see in Example~\ref{examp:not-weighted}, for this
linearization additional boundary strata become semistable, and the
resulting GIT quotient is not isomorphic to
\(\mathbb P(1,2,3)\).
\end{example}

\subsection{Variation of GIT and wall crossing} The preceding description of the semistable locus shows that its variation with the choice of linearization is controlled by the finitely many linear forms
\[
\chi\longmapsto \langle w_I(\chi),\lambda\rangle,
\qquad I\subseteq S.
\]
We therefore obtain the following chamber decomposition and wall-crossing description for the Peterson GIT quotients.

For \(I\subseteq S\), let
\[
H_I
:=
\left\{
\chi\in\mathcal D:
\langle w_I(\chi),\lambda\rangle=0
\right\}.
\]
The connected components of
\[
\mathcal D\setminus\bigcup_{I\subseteq S}H_I
\]
will be called the \emph{GIT chambers}. For a regular dominant
root-lattice character \(\chi\), set
\[
X_\chi
:=
(\mathrm{Pet}_n)^{ss}_{G_0}(\mathcal L(\chi))
\sslash G_0.
\]

\begin{theorem}\label{thm:peterson-vgit}
The Peterson GIT quotients satisfy the following variation-of-GIT
properties.

\begin{enumerate}
\item
If two regular dominant root-lattice characters
\(\chi,\chi'\) lie in the same GIT chamber, then
\[
(\mathrm{Pet}_n)^{ss}_{G_0}(\mathcal L(\chi))
=
(\mathrm{Pet}_n)^{ss}_{G_0}(\mathcal L(\chi')),
\]
and there is a canonical isomorphism
\[
X_\chi\cong X_{\chi'}.
\]

\item
Let \(C_+\) and \(C_-\) be adjacent GIT chambers with common wall
\(F\). Choose regular dominant root-lattice characters
\[
\chi_+\in C_+,\qquad
\chi_-\in C_-,
\]
and a root-lattice character \(\chi_0\) in the relative interior of
\(F\). Then there are projective birational morphisms
\[
X_{\chi_+}\longrightarrow X_{\chi_0}
\longleftarrow X_{\chi_-}.
\]
In particular, \(X_{\chi_+}\) and \(X_{\chi_-}\) are related by a
birational VGIT wall crossing
\[
X_{\chi_+}\dashrightarrow X_{\chi_-}.
\]

\item
If \(\chi\) lies on no wall, equivalently
\[
\langle w_I(\chi),\lambda\rangle\neq0
\qquad
\text{for every }I\subseteq S,
\]
then
\[
(\mathrm{Pet}_n)^{ss}_{G_0}(\mathcal L(\chi))
=
(\mathrm{Pet}_n)^s_{G_0}(\mathcal L(\chi)),
\]
and \(X_\chi\) is the geometric quotient of the semistable locus.
\end{enumerate}
\end{theorem}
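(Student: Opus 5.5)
Parts (1) and (3) follow directly from Lemma~\ref{lem:stability-strata}; part (2) needs the standard variation-of-GIT comparison for a single torus.

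\emph{Part (1).} By Lemma~\ref{lem:stability-strata}, whether a stratum $Y^0_{I,J}$ is semistable is decided only by the signs of the two numbers $\langle w_I(\chi),\lambda\rangle$ and $\langle w_J(\chi),\lambda\rangle$. Each map $\chi\mapsto\langle w_I(\chi),\lambda\rangle$ is linear and does not vanish on a GIT chamber. A chamber is connected, being a connected component of the complement of the walls. So each such sign is constant on the chamber. Hence $\chi$ and $\chi'$ in the same chamber give the same set of semistable strata, and therefore the same semistable locus $U$.

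For the isomorphism $X_\chi\cong X_{\chi'}$, I would not compare invariant section rings directly. Instead, in a chamber every $\langle w_I(\chi),\lambda\rangle$ is nonzero, so Proposition~\ref{lem:ss-equals-s} gives $U=U^s$. Then $X_\chi$ and $X_{\chi'}$ are both geometric quotients of the same $G_0$-variety $U$. By the universal property of categorical quotients they are canonically isomorphic, with the isomorphism compatible with the quotient maps from $U$.

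\emph{Part (3).} This is Proposition~\ref{lem:ss-equals-s} together with Remark~\ref{rem:geometric-quotient}.

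\emph{Part (2).} I would use the standard inclusions of semistable loci. Take $\chi_0$ in the relative interior of $F$. For every $I$ with $\langle w_I(\chi_0),\lambda\rangle\neq0$, the sign at $\chi_0$ agrees with the sign on $C_\pm$: by continuity, those forms are nonzero near $\chi_0$, and $C_\pm$ is adjacent along $F$. For the forms vanishing at $\chi_0$, the weak inequalities $\ge0$ and $\le0$ in Lemma~\ref{lem:stability-strata} are automatically satisfied. Therefore every stratum that is semistable for $\chi_\pm$ is also semistable for $\chi_0$, which gives
\[
(\mathrm{Pet}_n)^{ss}_{G_0}(\mathcal L(\chi_\pm))\subseteq(\mathrm{Pet}_n)^{ss}_{G_0}(\mathcal L(\chi_0)).
\]
The universal property of the good quotient $X_{\chi_0}$ then gives morphisms $X_{\chi_\pm}\to X_{\chi_0}$.

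\emph{Projectivity.} Both sides are projective, so the morphisms are proper. Alternatively, I would realize them via Thaddeus/Dolgachev--Hu: replace $\chi_\pm$ by $\chi_0+\epsilon(\chi_\pm-\chi_0)$, which stays in the same chamber by part (1). Then the morphism comes from the inclusion of invariant rings $\bigoplus_k H^0(\mathcal L(k\chi_0))^{G_0}\to\bigoplus_k H^0(\mathcal L(k\chi_\pm))^{G_0}$ after taking a suitable Veronese subring.

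\emph{Birationality.} Take the stratum $Y^0_{S,\emptyset}$. The proof of Proposition~\ref{lem:stability-nonempty} shows it is stable for every regular dominant character, and it is a dense open subset of $\mathrm{Pet}_n$. Its image in each of the quotients $X_{\chi_\pm}$ and $X_{\chi_0}$ is an open subset isomorphic to the geometric quotient $Y^0_{S,\emptyset}/G_0$. The comparison morphisms restrict to the identity there, hence are birational. Composing one with the inverse of the other gives the birational map $X_{\chi_+}\dashrightarrow X_{\chi_-}$.

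The main obstacle is making the morphisms $X_{\chi_\pm}\to X_{\chi_0}$ rigorous, since the linearizations differ and the inclusion of semistable loci alone gives only a morphism into a quotient that must be compared. I would handle this with the universal property of good quotients: $X_{\chi_0}$ is a categorical quotient of its own semistable locus, so restricting its quotient map to the $G_0$-invariant open subset $(\mathrm{Pet}_n)^{ss}_{G_0}(\mathcal L(\chi_\pm))$ gives a $G_0$-invariant morphism, which factors through $X_{\chi_\pm}$.
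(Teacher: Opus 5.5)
Your proposal is correct and follows the paper's proof. Parts (1) and (3) rest on the constancy of the signs of $\langle w_I(\cdot),\lambda\rangle$ on a chamber together with Lemma~\ref{lem:stability-strata}. Part (2) rests on the inclusion $(\mathrm{Pet}_n)^{ss}_{G_0}(\mathcal L(\chi_\pm))\subseteq(\mathrm{Pet}_n)^{ss}_{G_0}(\mathcal L(\chi_0))$ at a wall point and on birationality via the always-stable dense stratum $Y^0_{S,\emptyset}$.

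The one difference is that you construct $X_{\chi_\pm}\to X_{\chi_0}$ directly from the universal property of the categorical quotient $X_{\chi_\pm}$, as the paper later does for $f_M$ in Proposition~\ref{prop:middle-wall-nonnormal}, rather than citing Dolgachev--Hu and Thaddeus; this is fine. You should drop your side remark about an ``inclusion'' of invariant section rings $\bigoplus_k H^0(\mathcal L(k\chi_0))^{G_0}\to\bigoplus_k H^0(\mathcal L(k\chi_\pm))^{G_0}$: it is not needed, and as written there is no natural ring map of that form.
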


\begin{proof}
For a Richardson stratum \(Y^0_{I,J}\), the
Hilbert-Mumford criterion gives
\[
Y^0_{I,J}
\subseteq
(\mathrm{Pet}_n)^{ss}_{G_0}(\mathcal L(\chi))
\]
if and only if
\[
\langle w_I(\chi),\lambda\rangle\ge0,
\qquad
\langle w_J(\chi),\lambda\rangle\le0.
\]
Thus the semistable locus depends only on the signs of the finitely
many linear forms
\[
\langle w_I(\,\cdot\,),\lambda\rangle.
\]
Their zero sets therefore cut \(\mathcal D\) into finitely many open
convex polyhedral chambers.

If \(\chi\) and \(\chi'\) lie in the same chamber, all these linear
forms have the same signs at \(\chi\) and \(\chi'\). Hence exactly the
same Richardson strata are semistable, and so
\[
(\mathrm{Pet}_n)^{ss}_{G_0}(\mathcal L(\chi))
=
(\mathrm{Pet}_n)^{ss}_{G_0}(\mathcal L(\chi')).
\]
Both \(X_\chi\) and \(X_{\chi'}\) are good quotients of this same
\(G_0\)-variety. By uniqueness of good quotients, they are canonically
isomorphic. This proves (1).

Now let \(C_+\) and \(C_-\) be adjacent chambers with common wall
\(F\), and choose \(\chi_+,\chi_-,\chi_0\) as in (2). At \(\chi_0\),
the inequalities corresponding to the wall become weak equalities,
while all other signs agree with those in the adjacent chambers.
Consequently,
\[
(\mathrm{Pet}_n)^{ss}_{G_0}(\mathcal L(\chi_\pm))
\subseteq
(\mathrm{Pet}_n)^{ss}_{G_0}(\mathcal L(\chi_0)).
\]
The standard variation-of-GIT construction
\cite{DH,Thaddeus} therefore gives projective morphisms
\[
X_\pm\longrightarrow X_0.
\]

These morphisms are birational. Indeed, the stratum
\[
Y^0_{S,\emptyset}
=
\mathrm{Pet}_n\cap Bw_0B/B\cap B^-B/B
\]
is a nonempty dense open \(G_0\)-invariant subset of
\(\mathrm{Pet}_n\), and for every regular dominant \(\chi\),
\[
\langle\chi,\lambda\rangle<0,
\qquad
\langle w_0(\chi),\lambda\rangle>0.
\]
Hence every point of \(Y^0_{S,\emptyset}\) is stable for every
linearization under consideration. Its geometric quotient is therefore
a common dense open subset of \(X_+\), \(X_-\), and \(X_0\).
Thus
\[
X_+\longrightarrow X_0
\longleftarrow X_-
\]
are birational, proving (2).

Finally, suppose that \(\chi\) avoids all walls. If
\(x\in Y^0_{I,J}\) is semistable, then
\[
\langle w_I(\chi),\lambda\rangle\ge0,
\qquad
\langle w_J(\chi),\lambda\rangle\le0.
\]
Since neither pairing can vanish, both inequalities are strict.
Hence every semistable point is stable. Therefore
\[
(\mathrm{Pet}_n)^{ss}_{G_0}(\mathcal L(\chi))
=
(\mathrm{Pet}_n)^s_{G_0}(\mathcal L(\chi)),
\]
and the quotient is geometric. This proves (3).
\end{proof}

\begin{remark}\label{rem:quotients-birational}
For every regular dominant root-lattice character \(\chi\), the
Peterson--Richardson stratum
\[
Y^0_{S,\emptyset}
=
\mathrm{Pet}_n\cap Bw_0B/B\cap B^-B/B
\]
is contained in $(\mathrm{Pet}_n)^s_{G_0}(\mathcal L(\chi)).$
Indeed,
\[
\langle\chi,\lambda\rangle<0,
\qquad
\langle w_0(\chi),\lambda\rangle>0.
\]
Moreover, \(Y^0_{S,\emptyset}\) is a dense open \(G_0\)-invariant
subset of \(\mathrm{Pet}_n\). Hence its geometric quotient $Y^0_{S,\emptyset}/G_0$ is a common dense open subset of every Peterson GIT quotient
\(X_\chi\). Consequently, all the varieties \(X_\chi\) are
birational.

In particular, by Theorem~\ref{thm:deep-quotient}, every \(X_\chi\)
is birational to $\mathbb P(1,2,\ldots,n-1).$
Thus every Peterson GIT quotient considered here is rational.
Moreover, by Lemma~\ref{lem:stability-nonempty},
\[
\dim X_\chi=n-2.
\]
\end{remark}

\section{Normality of the Quotient}
\label{sec:normality}

In this section we determine exactly when the Peterson GIT quotient
\[
X_\chi
:=
(\mathrm{Pet}_n)^{ss}_{G_0}(\mathcal L(\chi))
\sslash G_0
\]
is normal. We first determine the normal locus of
\(\mathrm{Pet}_n\). We then analyze the maximal-parabolic wall strata
\(Y^0_{S,K_r}\) by means of the Toeplitz coordinates on the Peterson
big cell. This yields a complete normality criterion, including
linearizations lying on GIT walls.

\subsection{The normal locus of the Peterson variety} We begin with the local geometry of the Peterson variety itself.
The description of its singular locus, together with Serre's
criterion, gives an explicit description of its normal locus.

By \cite[Theorem~4]{Insko}, the singular locus of the Peterson variety is
\[
\operatorname{Sing}(\mathrm{Pet}_n)
=
\bigsqcup_{\substack{I\subseteq S\\
I\notin
\{S,\,
S\setminus\{\alpha_1\},\,
S\setminus\{\alpha_{n-1}\}\}}}
X_I^0.
\]
Thus
\[
\mathrm{Pet}_n^{\mathrm{sm}}
=
X_S^0
\sqcup
X_{S\setminus\{\alpha_1\}}^0
\sqcup
X_{S\setminus\{\alpha_{n-1}\}}^0.
\]

For \(n\le3\), the Peterson variety is normal. For \(n\ge4\), its
normal locus is described as follows.

\begin{proposition}\label{prop:normal-locus}
Assume \(n\ge4\). Then
\[
\mathrm{Pet}_n^{\mathrm{norm}}
=
X_S^0
\sqcup
X_{S\setminus\{\alpha_1\}}^0
\sqcup
X_{S\setminus\{\alpha_{n-1}\}}^0
\sqcup
X_{S\setminus\{\alpha_1,\alpha_{n-1}\}}^0.
\]
Equivalently,
\[
\mathrm{Pet}_n^{\mathrm{norm}}
=
\mathrm{Pet}_n^{\mathrm{sm}}
\sqcup
X_{S\setminus\{\alpha_1,\alpha_{n-1}\}}^0.
\]
\end{proposition}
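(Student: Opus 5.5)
Since \(\mathrm{Pet}_n\) is a local complete intersection (\cite{Insko}), it is Cohen--Macaulay and hence satisfies Serre's condition \(S_2\) everywhere. By Serre's criterion, a point \(x\) is therefore a normal point exactly when the local ring at \(x\) is regular in codimension one (\(R_1\)). Concretely, \(x\) is normal if and only if no irreducible component of \(\operatorname{Sing}(\mathrm{Pet}_n)\) passing through \(x\) has codimension one in \(\mathrm{Pet}_n\). So I will determine the codimension-one components of the singular locus. I will then show that a cell \(X_I^0\) consists of normal points if and only if its closure avoids them.

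\textbf{Step 1: find the codimension-one singular components.} Insko--Yong's description gives
\[
\operatorname{Sing}(\mathrm{Pet}_n)=\bigsqcup_{I\notin\{S,K_1,K_{n-1}\}}X_I^0 .
\]
By the closure relation \eqref{eq:peterson-cell-closure}, this set is a union of closures \(X_I\). Its irreducible components are the \(X_I\) for \(I\) maximal among subsets other than \(S,K_1,K_{n-1}\). For \(n\ge4\) these maximal subsets are precisely \(K_r\) with \(2\le r\le n-2\), since every proper subset of \(S\) lies in some \(K_r\). The sets \(K_r\) with \(2\le r\le n-2\) exist because \(n\ge4\). Each \(X_{K_r}\) has dimension \(n-2\), so it has codimension one in \(\mathrm{Pet}_n\). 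Thus every component of the singular locus is a divisor \(X_{K_r}\) with \(2\le r\le n-2\).

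\textbf{Step 2: identify the non-normal points.} Along the dense open cell \(X^0_{K_r}\) of such a component, the variety fails \(R_1\), so these points are non-normal. Non-normality is local and closed, so the whole closure \(X_{K_r}=\bigsqcup_{J\subseteq K_r}X_J^0\) is non-normal. (To justify the closedness: the normal locus is open, so the non-normal locus is closed and contains \(X^0_{K_r}\), hence its closure.)

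Conversely, take a point \(x\) of \(\operatorname{Sing}(\mathrm{Pet}_n)\) that lies in no \(X_{K_r}\) with \(2\le r\le n-2\). Then every singular component through \(x\) is one of these divisors, which is impossible. So \(x\) is smooth, and in particular normal. Hence
\[
\mathrm{Pet}_n^{\mathrm{norm}}=\mathrm{Pet}_n\setminus\bigcup_{r=2}^{n-2}X_{K_r}=\bigsqcup_{I\not\subseteq K_r\ (2\le r\le n-2)}X_I^0 .
\]

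\textbf{Step 3: translate into the stated list.} The condition \(I\not\subseteq K_r\) for all \(2\le r\le n-2\) means \(\alpha_2,\dots,\alpha_{n-2}\in I\). This holds exactly when \(I\supseteq S\setminus\{\alpha_1,\alpha_{n-1}\}\), which gives the four subsets \(S\), \(K_1\), \(K_{n-1}\), and \(S\setminus\{\alpha_1,\alpha_{n-1}\}\). For \(n=4\) the last set is \(\{\alpha_2\}\), and it is still distinct from the other three.

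\textbf{Main obstacle.} The delicate step is the claim that \(\mathrm{Pet}_n\) really fails \(R_1\) along \(X^0_{K_r}\). One could worry that a codimension-one singular locus is somehow compatible with normality, but it is not: a normal variety is regular in codimension one, so a singular divisor forces non-normality. The remaining care is to cite the local complete intersection property correctly, since this is what supplies \(S_2\) at the points of \(X^0_{S\setminus\{\alpha_1,\alpha_{n-1}\}}\). Those points are singular but lie on no singular divisor, so \(S_2\) is what makes them normal. I would also note that the singular locus has codimension exactly one there, since \(\dim X_{S\setminus\{\alpha_1,\alpha_{n-1}\}}=n-3\).
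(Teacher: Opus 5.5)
Your overall strategy is the paper's: lci $\Rightarrow$ $S_2$, Serre's criterion, the codimension-one singular cells $X^0_{K_r}$ for $2\le r\le n-2$, then the closure relation. Your final formula is also right. However, Steps 1--2 as written rest on a false claim.

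In Step 1 you assert that the maximal subsets among those $I\notin\{S,K_1,K_{n-1}\}$ are exactly the $K_r$ with $2\le r\le n-2$, ``since every proper subset lies in some $K_r$''. That justification fails because $K_1$ and $K_{n-1}$ are excluded from the list. The subset $S\setminus\{\alpha_1,\alpha_{n-1}\}$ is itself a singular index, since it is none of $S$, $K_1$, $K_{n-1}$. Its only strictly larger subsets are $K_1$, $K_{n-1}$ and $S$, so it is maximal. Hence $\operatorname{Sing}(\mathrm{Pet}_n)$ has an extra irreducible component $X_{S\setminus\{\alpha_1,\alpha_{n-1}\}}$, already present for $n=4$ as $X_{\{\alpha_2\}}$. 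This component has dimension $n-3$, i.e.\ codimension two, not one as your final sentence says. It is not contained in any $X_{K_r}$ with $2\le r\le n-2$.

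The converse in Step 2 then breaks. Take $x\in X^0_{S\setminus\{\alpha_1,\alpha_{n-1}\}}$: it is singular and lies in no such $X_{K_r}$, yet you conclude ``$x$ is smooth''. This contradicts the Insko--Yong description you started from. These are exactly the normal-but-singular points the proposition is about. Your ``Main obstacle'' paragraph correctly treats them as singular, so the write-up is internally inconsistent.

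The repair is to use the criterion from your opening paragraph instead of asserting smoothness. A point is non-normal if and only if it lies on a codimension-one irreducible component of $\operatorname{Sing}(\mathrm{Pet}_n)$. Those components are exactly the $X_{K_r}$ with $2\le r\le n-2$. Indeed, the codimension-one cells are the $X^0_{K_r}$, and among them only the middle ones are singular.

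The extra component has codimension two, so it imposes no $R_1$ condition. Thus points of $X^0_{S\setminus\{\alpha_1,\alpha_{n-1}\}}$ satisfy $R_1$, and by $S_2$ they are normal although singular. This is precisely how the paper argues: a point is non-normal ``precisely when it lies in the closure of a singular codimension-one point.'' With this change, your Step 2 non-normality argument along $X^0_{K_r}$ and your Step 3 translation go through unchanged.
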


\begin{proof}
By \cite[Corollary~7]{Insko}, \(\mathrm{Pet}_n\) is a local complete
intersection and hence Cohen--Macaulay. Therefore it satisfies
Serre's condition \(S_2\). Thus, by Serre's criterion, normality is equivalent to
regularity in codimension one.

Recall that $X_I^0\simeq\mathbb A^{|I|}$ and $\dim\mathrm{Pet}_n=n-1.$
Hence
\[
\operatorname{codim}_{\mathrm{Pet}_n}X_I^0
=
n-1-|I|.
\]
In particular, the codimension-one Peterson cells are exactly $X_{S\setminus\{\alpha_i\}}^0, 1\le i\le n-1.$ By the description of the singular locus above, the cells with
\(i=1\) and \(i=n-1\) are smooth, whereas $X_{S\setminus\{\alpha_i\}}^0,$ for $2\le i\le n-2,$ are singular. So the codimension-one Peterson cells contained in
\(\operatorname{Sing}(\mathrm{Pet}_n)\) are precisely $
X_{S\setminus\{\alpha_i\}}^0$ where $2\le i\le n-2.$
Consequently, the codimension-one irreducible components of the
singular locus are $\overline{X_{S\setminus\{\alpha_i\}}^0}$, where $2\le i\le n-2.$ Thus the singular codimension-one locus is $\bigsqcup_{i=2}^{n-2}
X_{S\setminus\{\alpha_i\}}^0.$

Since \(\mathrm{Pet}_n\) satisfies \(S_2\), a point is non-normal
precisely when it lies in the closure of a singular codimension-one
point. Therefore the non-normal locus is $\bigcup_{i=2}^{n-2}
\overline{X_{S\setminus\{\alpha_i\}}^0}.$
Using the Peterson-cell closure relation
\[
\overline{X_I^0}
=
\bigsqcup_{J\subseteq I}X_J^0,
\]
we obtain
\[
\mathrm{Pet}_n\setminus\mathrm{Pet}_n^{\mathrm{norm}}
=
\bigsqcup_{\substack{J\subseteq S\\
J\subseteq S\setminus\{\alpha_i\}
\text{ for some }2\le i\le n-2}}
X_J^0.
\]
Equivalently, \(X_J^0\) is contained in the non-normal locus if and
only if \(J\) omits at least one of the middle simple roots $\alpha_2,\ldots,\alpha_{n-2}.$
Hence \(X_J^0\) lies in the normal locus if and only if
\[
\{\alpha_2,\ldots,\alpha_{n-2}\}\subseteq J.
\]
There are exactly four such subsets of \(S\), according to whether
the two endpoint roots \(\alpha_1\) and \(\alpha_{n-1}\) are included:
\[
S,\qquad
S\setminus\{\alpha_1\},\qquad
S\setminus\{\alpha_{n-1}\},\qquad
S\setminus\{\alpha_1,\alpha_{n-1}\}.
\]
Therefore
\[
\mathrm{Pet}_n^{\mathrm{norm}}
=
X_S^0
\sqcup
X_{S\setminus\{\alpha_1\}}^0
\sqcup
X_{S\setminus\{\alpha_{n-1}\}}^0
\sqcup
X_{S\setminus\{\alpha_1,\alpha_{n-1}\}}^0.
\]
The first three cells form \(\mathrm{Pet}_n^{\mathrm{sm}}\), giving
the equivalent description.
\end{proof}

For \(n\ge4\), set
\[
\mathcal N
=
\left\{
S,\,
S\setminus\{\alpha_1\},\,
S\setminus\{\alpha_{n-1}\},\,
S\setminus\{\alpha_1,\alpha_{n-1}\}
\right\}.
\]
Then
\begin{equation}\label{eq:normal-index-set}
\mathrm{Pet}_n^{\mathrm{norm}}
=
\bigsqcup_{I\in\mathcal N}X_I^0.
\end{equation}
Recalling \(K_r=S\setminus\{\alpha_r\}\), we also have
\begin{equation}\label{eq:bad-index-normality}
I\notin\mathcal N
\quad\Longleftrightarrow\quad
I\subseteq K_r
\quad\text{for some }2\le r\le n-2.
\end{equation}

\subsection{The maximal-parabolic wall strata}

We next describe the \(G_0\)-orbits in
\[
Y^0_{S,K_r},
\qquad
K_r=S\setminus\{\alpha_r\}.
\]
Toeplitz coordinates convert the Bruhat conditions defining
\(Y^0_{S,K_r}\) into a factorization of two complementary polynomials.
This gives an explicit description of its \(G_0\)-orbits.
We recall the Toeplitz parametrization of the Peterson big cell:
\[
\mathrm{Pet}_n\cap Bw_0B/B
=
\left\{
u(\mathbf a)w_0B/B:
\mathbf a=(a_1,\ldots,a_{n-1})\in\mathbb C^{n-1}
\right\},
\]
where
\[
u(\mathbf a)
=
I+\sum_{j=1}^{n-1}a_jN^j,
\qquad
N=\sum_{i=1}^{n-1}E_{i,i+1}.
\]
Write
\[
u(\mathbf a)^{-1}
=
I+\sum_{j=1}^{n-1}b_jN^j.
\]

We shall use the following elementary consequence of the Bruhat rank
criterion. If \(J\subsetneq S\) and the first block determined by
\(J\) has size \(d\), then the first nonzero entry in the first row
of a matrix in \(B^-w_JB\) occurs in column \(d\).

\begin{proposition}\label{prop:maximal-stratum-orbits}
Let \(1\le r\le n-1\). Then the set of \(G_0\)-orbits in
\(Y^0_{S,K_r}\) is naturally identified with $\binom{\mu_n}{r}\big/\mu_n,$
where $\mu_n=\{\zeta\in\mathbb C^*:\zeta^n=1\},$ and 
\(\binom{\mu_n}{r}\) denotes the set of \(r\)-element subsets of
\(\mu_n\), and \(\mu_n\) acts by multiplication.

Consequently,
\begin{equation}\label{eq:necklace-number}
N(n,r)
:=
\#\bigl(Y^0_{S,K_r}/G_0\bigr)
=
\frac1n
\sum_{d\mid\gcd(n,r)}
\varphi(d)
\binom{n/d}{r/d}.
\end{equation}
Moreover,
\begin{equation}\label{eq:single-orbit-normal}
N(n,r)=1
\quad\Longleftrightarrow\quad
r\in\{1,n-1\}
\quad\Longleftrightarrow\quad
K_r\in\mathcal N.
\end{equation}
\end{proposition}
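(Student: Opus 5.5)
The plan is to work entirely in the Toeplitz coordinates on the Peterson big cell. There we convert the Bruhat conditions defining $Y^0_{S,K_r}$ into degree conditions on a polynomial and its inverse modulo $x^n$. Identify $u(\mathbf a)=f(N)$ with $f(x)=1+a_1x+\cdots+a_{n-1}x^{n-1}$, viewed in $\mathbb C[x]/(x^n)$. Then $u(\mathbf a)^{-1}=g(N)$, where $g=1+b_1x+\cdots+b_{n-1}x^{n-1}$ is the inverse of $f$ modulo $x^n$. By the proof of Theorem~\ref{thm:deep-quotient}, $\lambda(t)$ acts by $a_j\mapsto t^{-j}a_j$, that is, $f(x)\mapsto f(t^{-1}x)$, and then also $g(x)\mapsto g(t^{-1}x)$.

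\emph{Step 1: the stratum $Y^0_{S,K_r}$ in Toeplitz coordinates.} The first row of $u(\mathbf a)w_0$ is $(a_{n-1},\ldots,a_1,1)$. A point of the big cell lies in exactly one $Y^0_{S,J}$, and the recalled rank fact says that the first block of $J$ has size $d$ exactly when the first nonzero entry of this row is in column $d$. So the first block has size $r$ iff $a_{r+1}=\cdots=a_{n-1}=0$ and $a_r\neq0$. I would apply the same criterion symmetrically, to the last block, via the inverse-transpose (equivalently the last column of $(u(\mathbf a)w_0)^{-1}=w_0\,u(\mathbf a)^{-1}$, which is built from the $b_j$). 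This gives: the last block has size $n-r$ iff $b_{n-r+1}=\cdots=b_{n-1}=0$ and $b_{n-r}\neq0$. A first block of size $r$ forces $\alpha_1,\ldots,\alpha_{r-1}\in J$ and $\alpha_r\notin J$. A last block of size $n-r$ forces $\alpha_{r+1},\ldots,\alpha_{n-1}\in J$. Together these say exactly $J=K_r$. Hence
\[
Y^0_{S,K_r}\cong\{(f,g):\deg f=r,\ \deg g=n-r,\ fg\equiv1 \bmod x^n\}.
\]
Since $\deg(fg)=n$ and $fg\equiv1\bmod x^n$, this means precisely $fg=1+cx^n$ with $c\neq0$. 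Conversely, every divisor $f$ of degree $r$ of some $1+cx^n$ with $f(0)=1$ arises this way. The main obstacle is making the symmetric last-block criterion for $B^-w_JB$ precise (via the map $g\mapsto w_0(g^{-1})^{T}w_0$, which swaps the roles of first and last blocks) and checking the column indexing carefully.

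\emph{Step 2: orbits.} Under $f(x)\mapsto f(t^{-1}x)$, the constant $c$ becomes $t^{-n}c$, so each orbit meets the slice $c=-1$. On this slice $f=\prod_{\zeta\in A}(1-\zeta x)$ for a unique $r$-subset $A\subseteq\mu_n$, because $1-x^n$ has distinct roots. The stabilizer of the slice is $t\in\mu_n$, acting by $A\mapsto t^{-1}A$. This gives the identification of $Y^0_{S,K_r}/G_0$ with $\binom{\mu_n}{r}/\mu_n$.

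\emph{Step 3: counting and the criterion.} Formula \eqref{eq:necklace-number} follows from Burnside's lemma. A rotation of order $d$ fixes an $r$-subset iff $d\mid r$, and it then fixes $\binom{n/d}{r/d}$ subsets; there are $\varphi(d)$ rotations of order $d$. For $r\in\{1,n-1\}$, $\mu_n$ acts transitively, so $N(n,r)=1$. For $2\le r\le n-2$ (so $n\ge4$), the subsets $\{1,\zeta,\ldots,\zeta^{r-1}\}$ and $\{1,\zeta,\ldots,\zeta^{r-2},\zeta^{r}\}$ have different multisets of cyclic gaps, so they are not rotations of each other and $N(n,r)\ge2$. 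Finally, $K_r\in\mathcal N$ iff $K_r\in\{S\setminus\{\alpha_1\},S\setminus\{\alpha_{n-1}\}\}$, since $K_r\neq S$ and $K_r\neq S\setminus\{\alpha_1,\alpha_{n-1}\}$ for $n\ge4$. So $K_r\in\mathcal N$ iff $r\in\{1,n-1\}$, which gives \eqref{eq:single-orbit-normal}.
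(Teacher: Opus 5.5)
Your route is the paper's route step for step. You use Toeplitz coordinates and the first-row Bruhat criterion, applied to $u(\mathbf a)w_0$ and (after inverting and conjugating by $w_0$) to $u(\mathbf a)^{-1}w_0$. From this you get the factorization $fg=1+cx^n$ with $c\neq0$, normalize the roots into $\mu_n$, apply Burnside, and exhibit two inequivalent necklaces for $2\le r\le n-2$. Your slice $c=-1$ is the paper's choice of $s$ with $s^n=c$, and your gap-multiset invariant plays the same role as the paper's count of consecutive pairs.

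The one thing to fix is the indexing you flagged yourself. Column $d$ of the row $(a_{n-1},\ldots,a_1,1)$ carries $a_{n-d}$. So a first block of size $r$ means $a_{n-r}\neq0$ and $a_k=0$ for $k>n-r$, that is, $\deg f=n-r$. Dually, the first row of $u(\mathbf a)^{-1}w_0\in B^-w_{K_{n-r}}B$ gives $\deg g=r$. As a check, for $n=3$ the point $\mathbf a=(1,0)$ lies in $Y^0_{S,K_2}$, not $Y^0_{S,K_1}$; there $fg=(1+x)(1-x+x^2)=1+x^3$.

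Because you swapped both degrees consistently, nothing else breaks. On the slice $c=-1$ you have $fg=1-x^n=\prod_{\zeta\in\mu_n}(1-\zeta x)$. So the set $A$ with $f=\prod_{\zeta\in A}(1-\zeta x)$ has $n-r$ elements, and the corresponding set for $g$ is the complement $\mu_n\setminus A$. To land in $\binom{\mu_n}{r}$, either read off the $r$-subset from $g$, as the paper does with $B(T)$, or compose with the complement map, which is $\mu_n$-equivariant. With that change, the identification with $\binom{\mu_n}{r}/\mu_n$, the count $N(n,r)$, and the criterion for $N(n,r)=1$ go through exactly as you wrote them.
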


\begin{proof}
Assume \(n\ge4\), and let \(1\le r\le n-1\). We put \(m=n-r\), and let $x=u(\mathbf a)w_0B/B.$
Suppose first that \(x\in Y^0_{S,K_r}\). Then
\(u(\mathbf a)w_0 \in B^-w_{K_r}B\). The first row of \(u(\mathbf a)w_0\) is
\[
(a_{n-1},a_{n-2},\ldots,a_1,1).
\]
Since the first block of \(w_{K_r}\) has size \(r\), the Bruhat
criterion gives that its first
nonzero entry must occur in column \(r\). As \(m=n-r\), this gives
\[
a_{m+1}=\cdots=a_{n-1}=0,
\qquad
a_m\neq0.
\]
We have,
\[
w_0\bigl(u(\mathbf a)w_0\bigr)^{-1}w_0
=
u(\mathbf a)^{-1}w_0.
\]
Moreover,
\[
u(\mathbf a)w_0 \in B^-w_{K_r}B
\quad\Longrightarrow\quad
w_0(u(\mathbf a)w_0)^{-1}w_0
\in
B^-\bigl(w_0w_{K_r}w_0\bigr)B
=
B^-w_{K_{n-r}}B.
\]
Writing $u(\mathbf a)^{-1}
=
I+\sum_{j=1}^{n-1}b_jN^j,$ the first row of \(u(\mathbf a)^{-1}w_0\) is $(b_{n-1},b_{n-2},\ldots,b_1,1).$
Since the first block of \(w_{K_{n-r}}\) has size \(n-r=m\), the
Bruhat criterion again  gives
\[
b_{r+1}=\cdots=b_{n-1}=0,
\qquad
b_r\neq0.
\]
Conversely, suppose these two sets of conditions hold, and let
\(J\subseteq S\) be the unique subset such that
\(x\in B^-w_JB/B\). The first set of conditions shows that the first
block associated with \(J\) has size \(r\), while the second shows
that its last block has size \(n-r\). Since these two sizes add up to
\(n\), the block decomposition is exactly \(n=r+(n-r)\). Hence
\(J=K_r\). Therefore
\begin{equation}\label{eq:maximal-stratum-toeplitz}
x\in Y^0_{S,K_r}
\quad\Longleftrightarrow\quad
\begin{cases}
a_{m+1}=\cdots=a_{n-1}=0,\quad a_m\neq0,\\
b_{r+1}=\cdots=b_{n-1}=0,\quad b_r\neq0.
\end{cases}
\end{equation}

Consider the polynomials
\[
A(T)=1+a_1T+\cdots+a_mT^m,
\qquad
B(T)=1+b_1T+\cdots+b_rT^r.
\]
Since $u(\mathbf a)u(\mathbf a)^{-1}=I$
and \(N^n=0\), we have 
\[
A(T)B(T)\equiv1\pmod{T^n}.
\]
Since $\deg A=m,\deg B=r$ and $ m+r=n$, 
we obtain
\begin{equation}\label{eq:AB-factorization}
A(T)B(T)=1+\gamma T^n
\end{equation}
for some $\gamma=a_mb_r\neq0.$

Suppose $B(T)$ factors as
\[
B(T)=\prod_{i=1}^r(1-z_iT).
\]
Since \(b_r\neq0\), all \(z_i\) are nonzero. By
\eqref{eq:AB-factorization}, we have $B(T)\mid 1+\gamma T^n.$
The polynomial \(1+\gamma T^n\) is separable, since
\(\gamma\neq0\). Hence \(B(T)\) is separable, so the \(z_i\)'s are
pairwise distinct. Substituting \(T=z_i^{-1}\) in
\eqref{eq:AB-factorization} gives
\[
z_i^n=-\gamma,
\qquad
1\le i\le r.
\]
Thus every point of \(Y^0_{S,K_r}\) determines an unordered set $\{z_1,\ldots,z_r\}
\subset\mathbb C^*$
of distinct elements having the same nonzero \(n\)-th power.

Conversely, suppose \(z_1,\ldots,z_r\) are distinct nonzero numbers
such that $z_1^n=\cdots=z_r^n=c\neq0.$
Then $B(T):=\prod_{i=1}^r(1-z_iT)$
divides \(1-cT^n\). Indeed, the roots of \(B(T)\) are the distinct numbers \(z_i^{-1}\),
and
\[
1-cz_i^{-n}=1-\frac{c}{z_i^n}=0.
\]
Hence every linear factor \(1-z_iT\) divides \(1-cT^n\), and since
these factors are pairwise coprime, their product \(B(T)\) divides
\(1-cT^n\). We put $A(T):=\frac{1-cT^n}{B(T)}.$
Then \(A(0)=1\), $\deg A=n-r=m,$ $\deg B=r$, and both leading coefficients are nonzero. Hence
\[
A(T)B(T)\equiv1\pmod{T^n},
\]
and \eqref{eq:maximal-stratum-toeplitz} gives a point of
\(Y^0_{S,K_r}\). We have therefore obtained a bijection between
\(Y^0_{S,K_r}\) and the unordered \(r\)-element subsets $\{z_1,\ldots,z_r\}\subset\mathbb C^*$
whose elements have a common nonzero \(n\)-th power.

We choose \(s\in\mathbb C^*\)
such that \(s^n=c\). Then
\[
C:=\{s^{-1}z_1,\ldots,s^{-1}z_r\}
\in\binom{\mu_n}{r},
\qquad
\{z_1,\ldots,z_r\}=sC.
\]
A different choice of \(s\) changes \(C\) by multiplication by an
element of \(\mu_n\). Under the \(G_0\)-action, the coefficients of
\[
B(T)=1+b_1T+\cdots+b_rT^r
\]
transform as \(b_j\mapsto t^{-j}b_j\). Thus \(B(T)\) transforms into
\[
1+\sum_{j=1}^r t^{-j}b_jT^j
=
B(t^{-1}T)
=
\prod_{i=1}^r\bigl(1-(t^{-1}z_i)T\bigr).
\]
Hence the associated subset transforms as $\{z_1,\ldots,z_r\}
\longmapsto
\{t^{-1}z_1,\ldots,t^{-1}z_r\}.$
Every \(G_0\)-orbit therefore has a representative in
\(\binom{\mu_n}{r}\), and two such representatives \(C,C'\) lie in
the same orbit if and only if $C'=\xi C$ for some $\xi\in\mu_n.$
Consequently,
\[
Y^0_{S,K_r}/G_0
\cong
\binom{\mu_n}{r}/\mu_n.
\]


The formula \eqref{eq:necklace-number} follows from Burnside's lemma.
Indeed, if \(\xi\in\mu_n\) has order \(d\), then its action on
\(\mu_n\) has \(n/d\) cycles, each of size \(d\). An \(r\)-element
subset is fixed by \(\xi\) if and only if \(d\mid r\), in which case
there are $\binom{n/d}{r/d}$
such subsets. Since there are \(\varphi(d)\) elements of order \(d\),
\eqref{eq:necklace-number} follows.

For \(r=1\), multiplication by \(\mu_n\) is transitive on
\(\mu_n\), and the case \(r=n-1\) follows by taking complements.
Hence \(N(n,r)=1\) for \(r=1,n-1\).

If \(2\le r\le n-2\), choose a primitive \(n\)-th root
\(\zeta\) and consider
\[
C_1=\{1,\zeta,\ldots,\zeta^{r-1}\},
\qquad
C_2=\{1,\zeta,\ldots,\zeta^{r-2},\zeta^r\}.
\]
The first contains \(r-1\) cyclically consecutive pairs, while the
second contains \(r-2\). This number is invariant under multiplication
by \(\mu_n\), so \(C_1\) and \(C_2\) belong to distinct
\(\mu_n\)-orbits. Thus \(N(n,r)\ge2\).

Finally, among the maximal subsets \(K_r\), we have
\[
K_r\in\mathcal N
\quad\Longleftrightarrow\quad
r\in\{1,n-1\},
\]
which proves \eqref{eq:single-orbit-normal}.
\end{proof}

Thus, among the maximal-parabolic Peterson cells, the attracting
stratum \(Y^0_{S,K_r}\) is a single \(G_0\)-orbit precisely when
\(X_{K_r}^0\) lies in the normal locus of \(\mathrm{Pet}_n\).

\subsection{Nonnormality on the middle walls}

The preceding orbit description allows us to treat linearizations
lying on a middle wall.

\begin{proposition}\label{prop:middle-wall-nonnormal}
Assume \(n\ge4\), and let \(\chi\) be a regular dominant
root-lattice character. If $\langle w_{K_r}(\chi),\lambda\rangle=0$
for some \(2\le r\le n-2\), then the quotient $(\mathrm{Pet}_n)^{ss}_{G_0}(\mathcal L(\chi))
\sslash G_0$ is not normal.
\end{proposition}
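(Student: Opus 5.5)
The plan is to exhibit a projective birational morphism onto \(X_\chi\) from a nearby chamber, and show it has a disconnected finite fiber over the polystable point \(\pi(p_{K_r})\). Zariski's main theorem then rules out normality of \(X_\chi\).

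\emph{Step 1: a perturbed linearization.} Choose \(\chi_+:=M\chi+\alpha_0\) with \(M\gg0\), where \(\alpha_0=\varpi_1+\varpi_{n-1}\) is the highest root. Then \(\chi_+\) is a regular dominant root-lattice character. By the computation in Remark~\ref{rem:deep-nonempty}, \(\langle w_{K_r}(\chi_+),\lambda\rangle=-1<0\). For every \(I\) with \(\langle w_I(\chi),\lambda\rangle\neq0\), taking \(M\) large guarantees that \(\langle w_I(\chi_+),\lambda\rangle\) has the same sign as \(\langle w_I(\chi),\lambda\rangle\).

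Hence, by the strata description of Lemma~\ref{lem:stability-strata}, whenever \(Y^0_{I,J}\) satisfies the \(\chi_+\)-inequalities it also satisfies the weak \(\chi\)-inequalities. Therefore
\[
(\mathrm{Pet}_n)^{ss}_{G_0}(\mathcal L(\chi_+))\subseteq(\mathrm{Pet}_n)^{ss}_{G_0}(\mathcal L(\chi)).
\]
Exactly as in the proof of Theorem~\ref{thm:peterson-vgit}(2), the standard VGIT construction gives a projective morphism \(f:X_{\chi_+}\to X_\chi\). This morphism is birational because the dense open stratum \(Y^0_{S,\emptyset}\) is stable for both linearizations (Remark~\ref{rem:quotients-birational}). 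Both quotients are irreducible, being images of the irreducible semistable loci.

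\emph{Step 2: computing the fiber over \(q:=\pi(p_{K_r})\).} By Lemma~\ref{lem:stability-strata}, \(p_{K_r}\) is \(\chi\)-polystable. A \(\chi_+\)-semistable point \(x\in Y^0_{I,J}\) maps to \(q\) exactly when \(p_{K_r}\) lies in the closure of \(G_0\cdot x\), i.e. when one of the two \(\lambda\)-limits \(p_J,p_I\) from Proposition~\ref{thm:BB-Peterson} equals \(p_{K_r}\). We treat the possibilities in turn.
\begin{itemize}
\item If \(I=K_r\), then \(\chi_+\)-semistability would require \(\langle w_{K_r}(\chi_+),\lambda\rangle\ge0\), which is false. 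So no such points occur.
\item If \(J=K_r\) and \(I=K_r\), the point is \(p_{K_r}\) itself, which is \(\chi_+\)-unstable.
\item If \(J=K_r\) and \(I=S\), the points of \(Y^0_{S,K_r}\) are \(\chi_+\)-stable, since \(\langle w_0(\chi_+),\lambda\rangle>0\) and \(\langle w_{K_r}(\chi_+),\lambda\rangle<0\).
\end{itemize}
Hence
\[
f^{-1}(q)=Y^0_{S,K_r}/G_0,
\]
a finite set with \(N(n,r)\) points by Proposition~\ref{prop:maximal-stratum-orbits}. Since \(2\le r\le n-2\), that proposition gives \(N(n,r)\ge2\). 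So \(f^{-1}(q)\) is finite, nonempty and disconnected.

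\emph{Step 3: conclusion.} Suppose \(X_\chi\) were normal. Since \(f\) is proper and birational between irreducible varieties, Zariski's main theorem (Zariski's connectedness theorem) says that every fiber of \(f\) is connected. This contradicts Step 2, so \(X_\chi\) is not normal.

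The main obstacle is Step 2: identifying the scheme-theoretic fiber with the set of orbits in \(Y^0_{S,K_r}\). This requires that the points of \(X_{\chi_+}\) over \(q\) are exactly the \(\chi_+\)-semistable orbits whose \(\chi\)-orbit closure meets \(G_0\cdot p_{K_r}\). I would justify this from the description of \(f\) as induced by the inclusion of semistable loci, using that the \(\chi\)-quotient identifies points precisely by S-equivalence. For the finiteness count I would use that points of \(X_{\chi_+}\) over stable loci are orbits. Connectedness only requires the underlying set, so the count \(N(n,r)\ge2\) suffices.
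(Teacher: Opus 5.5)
Your proposal is correct and follows essentially the same route as the paper. You perturb to a nearby linearization with $\langle w_{K_r}(\cdot),\lambda\rangle<0$ and get a projective birational morphism from the inclusion of semistable loci. You then identify the fiber over $\pi_\chi(p_{K_r})$ with $Y^0_{S,K_r}/G_0$, which has $N(n,r)\ge 2$ points, and this contradicts connectedness of fibers over a normal target. The differences are only cosmetic: you perturb by the highest root $\alpha_0$ instead of a deep-chamber character $\eta$ (both make every proper-subset pairing strictly negative). You also cite Zariski's Main Theorem (Hartshorne, Ch.~III, Cor.~11.4) directly, where the paper uses Stein factorization together with ``finite birational onto a normal variety is an isomorphism.''
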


\begin{proof} 
Let
\[
\pi_\chi:
(\mathrm{Pet}_n)^{ss}_{G_0}(\mathcal L(\chi))
\longrightarrow
(\mathrm{Pet}_n)^{ss}_{G_0}(\mathcal L(\chi))
\sslash G_0
\]
be the quotient morphism, and set $q_K:=\pi_\chi(p_K)$, where $K=K_r$.
Since $\langle w_K(\chi),\lambda\rangle=0,$
we have
\[
\mu^{\mathcal L(\chi)}(p_K,\lambda)
=
\mu^{\mathcal L(\chi)}(p_K,-\lambda)
=
0.
\]
Hence \(p_K\) is semistable and, being fixed by \(G_0\), is
polystable.

Since
\(\mathcal C^\lambda_{\mathrm{deep}}\) is a nonempty rational open
cone, we choose a regular dominant character $\eta\in\mathcal C^\lambda_{\mathrm{deep}}$ which is in the root lattice. For an integer \(M\gg0\), we set $\chi_M:=M\chi+\eta.$
Then \(\chi_M\) is again a regular dominant root-lattice character.
Choose \(M\) sufficiently large so that every nonzero number $\langle w_I(\chi),\lambda\rangle$, for every $I\subseteq S,$
has the same sign as $\langle w_I(\chi_M),\lambda\rangle.$ Since $\langle w_I(\eta),\lambda\rangle<0$ for $I\subsetneq S,$
every pairing which vanishes at \(\chi\) becomes strictly negative
at \(\chi_M\). In particular,
\begin{equation}\label{eq:K-negative-perturbation}
\langle w_K(\chi_M),\lambda\rangle<0.
\end{equation}

We claim that
\begin{equation}\label{eq:ss-M-inclusion}
(\mathrm{Pet}_n)^{ss}_{G_0}(\mathcal L(\chi_M))
\subseteq
(\mathrm{Pet}_n)^{ss}_{G_0}(\mathcal L(\chi)).
\end{equation}
Indeed, let $x\in Y^0_{I,J}$ be \(\chi_M\)-semistable. By
Lemma~\ref{lem:stability-strata},
\[
\langle w_I(\chi_M),\lambda\rangle\ge0,
\qquad
\langle w_J(\chi_M),\lambda\rangle\le0.
\]

If \(I\subsetneq S\), then
\[
\langle w_I(\chi_M),\lambda\rangle
=
M\langle w_I(\chi),\lambda\rangle
+
\langle w_I(\eta),\lambda\rangle.
\]
Since the second term is strictly negative, the inequality $\langle w_I(\chi_M),\lambda\rangle\ge0$ implies $\langle w_I(\chi),\lambda\rangle>0.$
If \(I=S\), then $\langle w_0(\chi),\lambda\rangle>0$
automatically holds. On the other hand, if $\langle w_J(\chi),\lambda\rangle>0,$
then, by the choice of \(M\),
\[
\langle w_J(\chi_M),\lambda\rangle>0,
\]
contradicting semistability. Hence $\langle w_J(\chi),\lambda\rangle\le0.$
Thus \(x\) is \(\chi\)-semistable, proving
\eqref{eq:ss-M-inclusion}.

Let
\[
\pi_M:
(\mathrm{Pet}_n)^{ss}_{G_0}(\mathcal L(\chi_M))
\longrightarrow
(\mathrm{Pet}_n)^{ss}_{G_0}(\mathcal L(\chi_M))
\sslash G_0
\]
be the corresponding good quotient. By
\eqref{eq:ss-M-inclusion}, the restriction $\pi_\chi\big|_
{(\mathrm{Pet}_n)^{ss}_{G_0}(\mathcal L(\chi_M))}$
is \(G_0\)-invariant. Hence, by the categorical property of the good
quotient, it factors uniquely through \(\pi_M\), giving a morphism
\[
f_M:
(\mathrm{Pet}_n)^{ss}_{G_0}(\mathcal L(\chi_M))
\sslash G_0
\longrightarrow
(\mathrm{Pet}_n)^{ss}_{G_0}(\mathcal L(\chi))
\sslash G_0
\]
such that
\[
f_M\circ\pi_M
=
\pi_\chi
\]
on
\((\mathrm{Pet}_n)^{ss}_{G_0}(\mathcal L(\chi_M))\).

Since the quotients are projective varieties, we get that \(f_M\) is
projective, and hence proper. Moreover, \(f_M\) is birational:
the dense open Richardson stratum $Y^0_{S,\emptyset}$
is stable for every regular dominant linearization, and its geometric
quotient is a common dense open subset of the two GIT quotients.

We next determine the fiber of \(f_M\) over \(q_K\). We claim,
set-theoretically, that
\begin{equation}\label{eq:wall-fiber}
f_M^{-1}(q_K)
=
Y^0_{S,K}/G_0.
\end{equation}

Let $y\in f_M^{-1}(q_K),$
and we choose $x\in
(\mathrm{Pet}_n)^{ss}_{G_0}(\mathcal L(\chi_M))$ such that $\pi_M(x)=y.$
Then
\[
\pi_\chi(x)
=
f_M(\pi_M(x))
=
q_K
=
\pi_\chi(p_K).
\]
Thus \(x\) and \(p_K\) are \(S\)-equivalent for the
\(\chi\)-linearization. Since \(\{p_K\}\) is a closed semistable
orbit, \(p_K\) lies in the closure of the \(G_0\)-orbit of \(x\). On the other hand, \(p_K\) is not \(\chi_M\)-semistable, because
\(\langle w_K(\chi_M),\lambda\rangle<0\), whereas \(x\) is
\(\chi_M\)-semistable. Thus \(x\neq p_K\). Moreover, \(x\) cannot be
fixed by \(G_0\), since otherwise
\(\overline{G_0\cdot x}=\{x\}\) could not contain \(p_K\).
Consequently, \(G_0\cdot x\) is one-dimensional.

Suppose $x\in Y^0_{I,J}.$ Then by Proposition~\ref{thm:BB-Peterson}, we have 
\[
\lim_{t\to0}\lambda(t)\cdot x=p_J,
\qquad
\lim_{t\to\infty}\lambda(t)\cdot x=p_I.
\]
The closure of the one-dimensional \(G_0\)-orbit of \(x\) consists
of the orbit together with these two fixed limits. Since \(p_K\)
belongs to this closure, we must have $I=K$ or $J=K$.
The first possibility is impossible, because
\eqref{eq:K-negative-perturbation} and
Lemma~\ref{lem:stability-strata} show that no stratum with first
index \(K\) is \(\chi_M\)-semistable. Hence $J=K.$
Since \(J\subseteq I\) and \(K\) is a maximal proper subset of \(S\),
we have $I=K$ or $I=S$. Again \(I=K\) is impossible, so $x\in Y^0_{S,K}.$

Conversely, let $x\in Y^0_{S,K}.$ Since 
\[
\langle w_0(\chi_M),\lambda\rangle>0
\]
and
\[
\langle w_K(\chi_M),\lambda\rangle<0,
\]
by Lemma~\ref{lem:stability-strata} we get that \(x\) is
\(\chi_M\)-stable. Moreover, $\lim_{t\to0}\lambda(t)\cdot x=p_K.$
Since \(p_K\) is \(\chi\)-semistable, the orbit closure of \(x\) in
the \(\chi\)-semistable locus meets the closed orbit \(\{p_K\}\).
Hence
\[
\pi_\chi(x)=\pi_\chi(p_K)=q_K.
\]
This proves \eqref{eq:wall-fiber}.

Since \(Y^0_{S,K}\) is contained in the \(\chi_M\)-stable locus, its
quotient by \(G_0\) is geometric.
By
Proposition~\ref{prop:maximal-stratum-orbits},
\[ \#(f_M^{-1}(q_K))=
\#\bigl(Y^0_{S,K}/G_0\bigr)
=
N(n,r)\ge2
\]
for \(2\le r\le n-2\). Hence the fiber
\(f_M^{-1}(q_K)\) is a finite set with at least two points, and is
therefore disconnected.

Suppose, for contradiction, that $(\mathrm{Pet}_n)^{ss}_{G_0}(\mathcal L(\chi))
\sslash G_0$
is normal. By the Stein factorization theorem
\cite[Chapter~III, Corollary~11.5]{Hart}, the proper morphism \(f_M\)
factors as
\[
(\mathrm{Pet}_n)^{ss}_{G_0}(\mathcal L(\chi_M))
\sslash G_0
\overset{g}{\longrightarrow}
Z
\overset{h}{\longrightarrow}
(\mathrm{Pet}_n)^{ss}_{G_0}(\mathcal L(\chi))
\sslash G_0,
\]
where \(g\) has connected fibers and \(h\) is finite.

Since \(f_M\) is birational, \(h\) is finite and birational. The
target is normal by assumption, so \(h\) is an isomorphism. Hence
\(f_M=g\) has connected fibers. This contradicts the disconnectedness
of \(f_M^{-1}(q_K)\). Therefore $(\mathrm{Pet}_n)^{ss}_{G_0}(\mathcal L(\chi))
\sslash G_0$
is not normal.
\end{proof}

\subsection{The normality criterion} We now combine the description of the normal locus with the analysis of
positive and zero middle weights to obtain the complete normality
criterion.

\begin{theorem}\label{thm:normality-criterion} 
Assume \(n\ge4\), and let \(\chi\) be a regular dominant character in the root lattice. Then the following are equivalent:
\begin{enumerate}
\item
\[
(\mathrm{Pet}_n)^{ss}_{G_0}(\mathcal L(\chi))
\sslash G_0
\]
is normal;

\item
\[
\langle w_{K_r}(\chi),\lambda\rangle<0
\qquad
(2\le r\le n-2);
\]

\item
\[
(\mathrm{Pet}_n)^{ss}_{G_0}(\mathcal L(\chi))
\subseteq
\mathrm{Pet}_n^{\mathrm{norm}}.
\]
\end{enumerate}
\end{theorem}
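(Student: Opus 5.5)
I will prove the cycle (2)$\Rightarrow$(3)$\Rightarrow$(1)$\Rightarrow$(2).

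For (2)$\Rightarrow$(3), let $x\in Y^0_{I,J}$ be semistable, so $\langle w_I(\chi),\lambda\rangle\ge0$. It suffices to show $I\in\mathcal N$, since then $x\in X_I^0\subseteq\mathrm{Pet}_n^{\mathrm{norm}}$ by \eqref{eq:normal-index-set}. Suppose instead $I\notin\mathcal N$. By \eqref{eq:bad-index-normality}, $I\subseteq K_r$ for some $2\le r\le n-2$. Monotonicity along the Bruhat order, as in the proof of Proposition~\ref{prop:deep-chamber-characterization}, gives
\[
\langle w_I(\chi),\lambda\rangle\le\langle w_{K_r}(\chi),\lambda\rangle<0,
\]
which is a contradiction.

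For (3)$\Rightarrow$(1), the semistable locus $U=(\mathrm{Pet}_n)^{ss}_{G_0}(\mathcal L(\chi))$ is an open $G_0$-stable subset of $\mathrm{Pet}_n$ contained in the normal locus, so $U$ is a normal variety. The quotient is covered by affine pieces $\operatorname{Spec}\mathbb C[U_s]^{G_0}$, where $U_s=\mathrm{Pet}_n\setminus V(s)\subseteq U$ is affine and normal. The ring of invariants of a reductive group acting on a normal domain is normal: if $f\in\operatorname{Frac}(A^{G})$ is integral over $A^{G}$, then $f\in A$ by normality of $A$. Moreover $f$ is $G$-invariant, being a ratio of invariants, so $f\in A^G$. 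Hence each affine chart is normal, and so is $X_\chi$.

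For (1)$\Rightarrow$(2), argue by contraposition. Suppose $\langle w_{K_r}(\chi),\lambda\rangle\ge0$ for some $2\le r\le n-2$. If equality holds, Proposition~\ref{prop:middle-wall-nonnormal} gives non-normality directly. The real work is the case $\langle w_{K_r}(\chi),\lambda\rangle>0$, which I expect to be the main obstacle. Then every point of $X^0_{K_r}$ with $J$ satisfying $\langle w_J(\chi),\lambda\rangle<0$ is stable; in particular $Y^0_{K_r,\emptyset}$ is stable, nonempty by the argument in Proposition~\ref{prop:deep-chamber-characterization}, and lies in the non-normal (codimension one) locus of $\mathrm{Pet}_n$. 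On the stable locus the quotient is geometric with finite stabilizers. By Luna's slice theorem, the quotient near $\pi(x)$ is étale-locally $\mathcal S/(G_0)_x$, where $\mathcal S$ is an étale slice at $x$ with $\mathcal S\times^{(G_0)_x}G_0\to\mathrm{Pet}_n$ étale onto a neighbourhood of $x$. Since normality is étale-local and preserved and reflected by smooth morphisms with connected fibres, non-normality of $\mathrm{Pet}_n$ at $x$ forces non-normality of $\mathcal S$. It then remains to show that it forces non-normality of $\mathcal S/\mu_m$.

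For that last point, I would first try to show that the stabilizer at a generic point of $Y^0_{K_r,\emptyset}$ is trivial, using the Toeplitz-type coordinates on $X^0_{K_r}$ with weights $1,\dots$. Then the quotient is étale-locally $\mathcal S\times(\text{point})$, and non-normality transfers immediately. If stabilizers turn out to be nontrivial, the fallback is to argue that $\mathcal S\to\mathcal S/\mu_m$ is finite, and that a normalization of $\mathcal S/\mu_m$ would lift, via the invariants of the normalization of $\mathcal S$, to a contradiction with the failure of $R_1$ along the $\mu_m$-stable divisor $X^0_{K_r}\cap\mathcal S$. An alternative route is to deform $\chi$ to a wall as in Proposition~\ref{prop:middle-wall-nonnormal}, but the Luna-slice argument seems cleaner.
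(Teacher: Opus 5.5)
Your arguments for (2)$\Rightarrow$(3), for (3)$\Rightarrow$(1), and for the wall case $\langle w_{K_r}(\chi),\lambda\rangle=0$ of (1)$\Rightarrow$(2) are exactly the paper's. The gap is in the case $\langle w_{K_r}(\chi),\lambda\rangle>0$. You correctly identify it as the heart of the matter, but you do not settle it. Everything there hinges on finding a stable point $x\in Y^0_{K_r,\emptyset}$ with \emph{trivial} $G_0$-stabilizer. You only announce an intention to get this from ``Toeplitz-type coordinates on $X^0_{K_r}$'', which you never establish; the paper has Toeplitz coordinates only on the big cell $X^0_S$.

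Your fallback does not work. Non-normality of a slice $\mathcal S$ need not survive a finite quotient when the group fixes the non-normal locus pointwise. For example, $\{y^2=x^3\}\times\mathbb A^1$ is non-normal along a divisor, yet its quotient by $y\mapsto -y$ is $\mathbb A^2$. So an $R_1$-type contradiction is available only if $\mu_m$ acts generically freely on $X^0_{K_r}\cap\mathcal S$. That is the same statement the fallback was meant to avoid.

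The missing idea, as the paper supplies it, is an effectiveness argument that needs no coordinates on $X^0_{K_r}$:
\begin{enumerate}
\item Since $r\ge2$, we have $\alpha_1\in K_r$. The closure relation then gives $X^0_{\{\alpha_1\}}=\{u_{\alpha_1}(z)s_1B/B\}\subseteq X_{K_r}$, and $G_0$ acts there by $z\mapsto t^{-1}z$.
\item Hence no nontrivial $\mu_d\subset G_0$ fixes $X_{K_r}$ pointwise. Because fixed loci are closed and $X^0_{K_r}$ is dense in $X_{K_r}$, none fixes $X^0_{K_r}=\operatorname{Spec}A$ pointwise either.
\item Choose homogeneous generators $f_i$ of $A$. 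Their nonzero weights then have greatest common divisor $1$, so every point of the dense open set $\bigcap_{m_i\neq0}D(f_i)$ has trivial stabilizer.
\item Intersect this set with the dense open stratum $Y^0_{K_r,\emptyset}$ to obtain the required $x$.
\end{enumerate}

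With such an $x$, your Luna argument goes through: the quotient is \'etale-locally $\mathcal S$ near $\pi(x)$, and $\mathrm{Pet}_n$ is \'etale-locally $G_0\times\mathcal S$ near $x$. The paper avoids Luna at this step. On an invariant affine neighbourhood inside the stable locus, it picks homogeneous $g_i$ with $g_i(x)\neq0$ and coprime degrees. It then builds an invertible degree-one function on $V=D(\prod g_i)$. This gives a Zariski-local product $V\cong\pi(V)\times\mathbb G_m$, which transfers non-normality from $x$ to $\pi(x)$ directly.
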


\begin{proof}
We first prove \((2)\Longleftrightarrow(3)\).

Assume (2), and let \(I\notin\mathcal N\). By
\eqref{eq:bad-index-normality}, there is some \(2\le r\le n-2\)
such that $I\subseteq K_r.$ Hence \(w_I\le w_{K_r}\). Since \(\chi\) is dominant and
\(-\lambda\) is strictly dominant,
\[
\langle w_I(\chi),\lambda\rangle
\le
\langle w_{K_r}(\chi),\lambda\rangle
<0.
\]
By Lemma~\ref{lem:stability-strata}, no Richardson stratum with first index \(I\) is semistable.
Using \eqref{eq:normal-index-set}, we obtain
\[
(\mathrm{Pet}_n)^{ss}_{G_0}(\mathcal L(\chi))
\subseteq
\mathrm{Pet}_n^{\mathrm{norm}}.
\]

Conversely, suppose (2) fails. Then $\langle w_{K_r}(\chi),\lambda\rangle\ge0$
for some \(2\le r\le n-2\). If this pairing is positive, the
nonempty stratum $Y^0_{K_r,\emptyset}\subseteq X_{K_r}^0$ is semistable, since $\langle\chi,\lambda\rangle<0.$
If the pairing is zero, the fixed point $p_{K_r}=Y^0_{K_r,K_r}$
is semistable. In either case the semistable locus meets
\(X_{K_r}^0\), which lies in the non-normal locus by
Proposition~\ref{prop:normal-locus}. Hence (3) fails. Thus
\[
(2)\Longleftrightarrow(3).
\]

We next prove \((3)\Rightarrow(1)\). Under (3), the semistable locus
is an open subvariety of \(\mathrm{Pet}_n^{\mathrm{norm}}\), and is
therefore normal. Let
\[
\pi_\chi:
(\mathrm{Pet}_n)^{ss}_{G_0}(\mathcal L(\chi))
\longrightarrow
(\mathrm{Pet}_n)^{ss}_{G_0}(\mathcal L(\chi))
\sslash G_0
\]
be the quotient morphism. If \(Q\) is an affine open subset of
the quotient and $\pi_\chi^{-1}(Q)=\operatorname{Spec}A,$
then $Q=\operatorname{Spec}A^{G_0}.$
The ring \(A\) is integrally closed. If $f\in\operatorname{Frac}(A^{G_0})$ is integral over \(A^{G_0}\), then it is integral over \(A\), hence
\(f\in A\); since \(f\) is \(G_0\)-invariant, \(f\in A^{G_0}\).
Thus \(A^{G_0}\) is integrally closed, proving (1).

It remains to prove \((1)\Rightarrow(2)\). Suppose that (2) fails.

If $\langle w_{K_r}(\chi),\lambda\rangle=0$ for some \(2\le r\le n-2\), then
Proposition~\ref{prop:middle-wall-nonnormal} shows directly that the
quotient is non-normal.

It remains to consider the case
\begin{equation}\label{eq:positive-middle-normality}
\langle w_{K_r}(\chi),\lambda\rangle>0
\end{equation}
for some \(2\le r\le n-2\). Put \(K=K_r\). Then $Y^0_{K,\emptyset}
=
X_K^0\cap B^-B/B$
is a nonempty open subset of the irreducible affine cell \(X_K^0\).
Moreover, by \eqref{eq:positive-middle-normality} and
\(\langle\chi,\lambda\rangle<0\), Lemma~\ref{lem:stability-strata}
gives
\[
Y^0_{K,\emptyset}
\subseteq
(\mathrm{Pet}_n)^s_{G_0}(\mathcal L(\chi)).
\]
Since \(K\notin\mathcal N\),
\[
X_K^0
\subseteq
\mathrm{Pet}_n\setminus\mathrm{Pet}_n^{\mathrm{norm}}.
\]

We first produce a point of \(Y^0_{K,\emptyset}\) with trivial
stabilizer. Since \(2\le r\le n-2\), we have \(\alpha_1\in K\), and
the closure relation gives $X_{\{\alpha_1\}}^0\subseteq X_K.$
On
\[
X_{\{\alpha_1\}}^0
=
\{u_{\alpha_1}(z)s_1B/B:z\in\mathbb C\}
\]
the action is $z\longmapsto t^{-1}z.$
Hence the \(G_0\)-action on \(X_K\) is effective. Since \(X_K^0\) is
dense in \(X_K\), the induced action on \(X_K^0\) is also effective.

We write
\[
X_K^0=\operatorname{Spec}A,
\qquad
A=\bigoplus_{m\in\mathbb Z}A_m,
\]
and choose nonzero homogeneous algebra generators
\(f_1,\ldots,f_N\) of weights \(m_1,\ldots,m_N\).
Effectiveness implies
\[
\gcd\{m_i:m_i\neq0\}=1;
\]
otherwise the corresponding nontrivial finite subgroup of
\(\mathbb G_m\) would act trivially on every generator.
Since \(A\) is a domain, $U:=
\bigcap_{m_i\neq0}D(f_i)$ is a nonempty \(G_0\)-invariant open subset. If \(t\in G_0\) fixes
\(x\in U\), then $t^{m_i}=1, \,\, m_i\neq0$
and hence \(t=1\). Thus every point of \(U\) has trivial stabilizer.

Since \(U\) and \(Y^0_{K,\emptyset}\) are nonempty open subsets of
the irreducible variety \(X_K^0\), choose $x\in U\cap Y^0_{K,\emptyset}.$
Then \(x\) is stable, is a non-normal point of \(\mathrm{Pet}_n\),
and satisfies $\operatorname{Stab}_{G_0}(x)=\{1\}.$

Suppose, for contradiction, that $(\mathrm{Pet}_n)^{ss}_{G_0}(\mathcal L(\chi))
\sslash G_0$
is normal. Choose an affine open neighborhood \(Q\) of
\(\pi_\chi(x)\), contained in the image of the stable locus. We put
\[
W:=\pi_\chi^{-1}(Q)=\operatorname{Spec}B.
\]
Then \(W\) is contained in the stable locus and $Q=\operatorname{Spec}B^{G_0};$
in particular, the quotient \(W\to Q\) is geometric.

We write $B=\bigoplus_{d\in\mathbb Z}B_d.$
Since \(\operatorname{Stab}_{G_0}(x)=\{1\}\), the weights \(d\) for
which there exists \(g\in B_d\) with \(g(x)\neq0\) have greatest
common divisor \(1\). Hence we may choose homogeneous functions
\[
g_i\in B_{d_i},
\qquad
g_i(x)\neq0,
\qquad
\gcd(d_1,\ldots,d_s)=1.
\]
Set
\[
V:=D(g_1\cdots g_s)\subseteq W.
\]
We choose integers \(c_i\) such that $\sum_i c_id_i=1.$
Since the \(g_i\)'s are invertible on \(V\), the function $h:=\prod_i g_i^{c_i}$ is an invertible homogeneous element of degree \(1\). Therefore
\[
\mathcal O(V)
=
\mathcal O(V)_0[h,h^{-1}],
\]
since for every homogeneous $a\in\mathcal O(V)_d$, $ah^{-d}\in\mathcal O(V)_0.$
Consequently,
\[
V
\simeq
\operatorname{Spec}\mathcal O(V)_0\times\mathbb G_m.
\]
The open subset \(V\) is \(G_0\)-invariant. Since \(W\to Q\) is a
geometric quotient, its fibers are precisely the \(G_0\)-orbits. The invariant open subset \(V\) is a union of
such fibers and is therefore saturated. Thus \(\pi_\chi(V)\) is open in \(Q\) and
\[
\pi_\chi(V)
\simeq
\operatorname{Spec}\mathcal O(V)_0.
\]
Normality of the quotient implies that \(\pi_\chi(V)\) is normal,
and therefore
\[
V
\simeq
\pi_\chi(V)\times\mathbb G_m
\]
is normal. This is impossible, since \(x\in V\) is a non-normal point
of \(\mathrm{Pet}_n\), and \(V\) is an open neighborhood of \(x\) in
\(\mathrm{Pet}_n\). Thus the quotient is non-normal whenever
\eqref{eq:positive-middle-normality} holds. Together with
Proposition~\ref{prop:middle-wall-nonnormal}, this proves
\((1)\Rightarrow(2)\), and hence the three conditions are equivalent.
\end{proof}

\begin{corollary}\label{cor:normality-small-rank}
For \(n\le5\), every Peterson GIT quotient associated with a regular
dominant root-lattice character is normal. For \(n\ge6\), both normal
and non-normal Peterson GIT quotients occur as the regular dominant
root-lattice character varies.
\end{corollary}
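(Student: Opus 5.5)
The plan is to derive both assertions directly from Theorem~\ref{thm:normality-criterion}, using the explicit sign analysis of the middle linear forms in Lemma~\ref{lem:maximal-parabolic-walls}. The small cases \(n\le3\) are handled separately, since Theorem~\ref{thm:normality-criterion} assumes \(n\ge4\).

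For \(n\le3\), I would argue as follows. The Peterson variety is normal (as recalled before Proposition~\ref{prop:normal-locus}), so the semistable locus, being open, is normal. The invariant-ring argument used in the proof of \((3)\Rightarrow(1)\) in Theorem~\ref{thm:normality-criterion} then shows that the quotient is normal. This argument does not use \(n\ge4\): if \(A\) is integrally closed, so is \(A^{G_0}\). For \(n=3\) one can alternatively cite Example~\ref{ex:GL3-deep}, which identifies the quotient with \(\mathbb P^1\).

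For \(n=4,5\), I would verify condition (2) of Theorem~\ref{thm:normality-criterion} for every regular dominant \(\chi\). The middle indices are \(2\le r\le n-2\). By Lemma~\ref{lem:maximal-parabolic-walls}, \(\langle w_{K_r}(\chi),\lambda\rangle<0\) holds for all \(\chi\in\mathcal D\) exactly when \(n-1\le 2r\le n+1\).
\begin{itemize}
\item For \(n=4\), only \(r=2\) occurs, and \(3\le4\le5\).
\item For \(n=5\), \(r=2\) gives \(4\le4\le6\) and \(r=3\) gives \(4\le6\le6\).
\end{itemize}
Hence condition (2) always holds, and every quotient is normal.

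For \(n\ge6\), take \(r=2\). Then \(2r=4<n-1\), so by the proof of Lemma~\ref{lem:maximal-parabolic-walls} the coefficients of \(\chi\mapsto\langle w_{K_2}(\chi),\lambda\rangle\) in the basis \(\varpi_j\) satisfy \(c_1(2)<0<c_{n-1}(2)\).
\begin{itemize}
\item \emph{A normal quotient.} Remark~\ref{rem:deep-nonempty} provides root-lattice characters in \(\mathcal C^\lambda_{\mathrm{deep}}\). There all middle pairings are negative, so condition (2) holds and the quotient is normal.
\item \emph{A non-normal quotient.} Consider
\[
\chi=n\Bigl(M\varpi_{n-1}+\sum_{j=1}^{n-2}\varpi_j\Bigr),\qquad M\gg0.
\]
Then \(\langle w_{K_2}(\chi),\lambda\rangle=n\bigl(Mc_{n-1}(2)+\sum_{j\le n-2}c_j(2)\bigr)>0\), so condition (2) fails and Theorem~\ref{thm:normality-criterion} shows the quotient is non-normal.
\end{itemize}
The factor \(n\) guarantees that \(\chi\) lies in the root lattice, since \(n\varpi_j\in Q\). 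The character \(\chi\) is regular dominant because all its coefficients are positive.

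The only step needing care is this lattice-point bookkeeping: making sure the chosen characters are genuinely in \(Q\) and regular dominant. The factor \(n\) settles it.
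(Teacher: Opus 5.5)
Your proof is correct. For $n\le5$, and for the normal example when $n\ge6$, it is the paper's argument.

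The difference is in how you produce the non-normal quotient for $n\ge6$.

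\textbf{The paper's route.} It takes an arbitrary root-lattice character lying \emph{on} the wall $H_{K_2}$, obtained from a rational point of $H_{K_2}\cap\mathcal D$ after clearing denominators. It then invokes Proposition~\ref{prop:middle-wall-nonnormal}. That proof perturbs to $\chi_M=M\chi+\eta$ and identifies the fiber of the induced birational morphism over $q_{K_2}$ with $Y^0_{S,K_2}/G_0$. This fiber has $N(n,2)\ge2$ points, and Stein factorization then rules out normality.

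\textbf{Your route.} You write down the explicit character $\chi=n\bigl(M\varpi_{n-1}+\sum_{j\le n-2}\varpi_j\bigr)$. Your checks are right: $n\varpi_j\in Q$, all coefficients are positive, and $c_{n-1}(2)=\tfrac{n-5}{2}>0$. So $\chi$ lies strictly on the positive side of $H_{K_2}$. Non-normality then comes from the other branch of $(1)\Rightarrow(2)$ in Theorem~\ref{thm:normality-criterion}. That branch uses a stable, non-normal point of $Y^0_{K_2,\emptyset}$ with trivial stabilizer. Near that point the semistable locus is locally $\pi_\chi(V)\times\mathbb G_m$, which forces the quotient to be non-normal.

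\textbf{Comparison.} Your version is fully explicit and avoids the rational-point-on-a-hyperplane step. It also shows that non-normal quotients occupy the whole open region $\langle w_{K_2}(\chi),\lambda\rangle>0$, not just walls. The paper's version shows that non-normality already occurs on the wall itself, which needs the finer fiber-counting argument. Either choice suffices for the corollary, since both cases are covered by Theorem~\ref{thm:normality-criterion}.
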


\begin{proof}
For \(n\le3\), 
\(\mathrm{Pet}_n\) is normal. Hence every semistable locus is normal,
and so is its good quotient by \(G_0\).

Let \(n=4\) or \(5\). By
Lemma~\ref{lem:maximal-parabolic-walls}, we have $\langle w_{K_r}(\chi),\lambda\rangle<0$ for $2\le r\le n-2$ and 
for every regular dominant character \(\chi\). Therefore
by Theorem~\ref{thm:normality-criterion} we get that the
quotient is normal.

Now let \(n\ge6\). The deep chamber is nonempty, and every
root-lattice character
\(\chi\in\mathcal C^\lambda_{\mathrm{deep}}\) satisfies
\[
\langle w_{K_r}(\chi),\lambda\rangle<0
\qquad
(2\le r\le n-2).
\]
Hence Theorem~\ref{thm:normality-criterion} gives normal quotients.

On the other hand, by Lemma~\ref{lem:maximal-parabolic-walls}, the hyperplane
\[
H_{K_2}
=
\left\{
\chi\in\mathcal D:
\langle w_{K_2}(\chi),\lambda\rangle=0
\right\}
\]
meets the regular dominant cone \(\mathcal D\). Since \(H_{K_2}\) is
a rational hyperplane, \(H_{K_2}\cap\mathcal D\) contains a rational
point. Multiplying by a sufficiently divisible positive integer gives
a regular dominant root-lattice character on \(H_{K_2}\). By
Proposition~\ref{prop:middle-wall-nonnormal}, the corresponding quotient is non-normal.
\end{proof}

\begin{corollary}\label{cor:generic-wall-normalization}
Let \(2\le r\le n-2\), and let \(\chi\) be a regular dominant
root-lattice character satisfying $\langle w_{K_r}(\chi),\lambda\rangle=0$
and $\langle w_{K_s}(\chi),\lambda\rangle<0$ for $2\le s\le n-2, \,\,  and \,\, s\neq r$.
Assume moreover that
\[
\langle w_I(\chi),\lambda\rangle\neq0
\qquad
\text{for every }I\subseteq S,\ I\neq K_r.
\]
Thus \(\chi\) lies in the relative interior of the single GIT wall
\(H_{K_r}\).

Let \(\chi_-\) be a regular dominant root-lattice character in the
adjacent chamber, chosen sufficiently close to \(\chi\), such that $\langle w_{K_r}(\chi_-),\lambda\rangle<0.$
Then the VGIT morphism
\[
\nu:
(\mathrm{Pet}_n)^{ss}_{G_0}(\mathcal L(\chi_-))
\sslash G_0
\longrightarrow
(\mathrm{Pet}_n)^{ss}_{G_0}(\mathcal L(\chi))
\sslash G_0
\]
is the normalization morphism.

More precisely, let
\[
\pi_\chi:
(\mathrm{Pet}_n)^{ss}_{G_0}(\mathcal L(\chi))
\longrightarrow
(\mathrm{Pet}_n)^{ss}_{G_0}(\mathcal L(\chi))
\sslash G_0
\]
be the good quotient morphism and put $
q_{K_r}:=\pi_\chi(p_{K_r}).$
Then
\[
\#\nu^{-1}(q_{K_r})
=
N(n,r)
=
\frac1n
\sum_{d\mid\gcd(n,r)}
\varphi(d)
\binom{n/d}{r/d}.
\]
\end{corollary}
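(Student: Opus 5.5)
We verify the three defining properties of a normalization: $X_{\chi_-}:=(\mathrm{Pet}_n)^{ss}_{G_0}(\mathcal L(\chi_-))\sslash G_0$ is normal, $\nu$ is finite, and $\nu$ is birational. We then compute the fiber over $q_{K_r}$.

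\emph{Normality of the source.} Since $\chi_-$ is close to $\chi$, every nonzero pairing $\langle w_I(\chi),\lambda\rangle$ keeps its sign at $\chi_-$. In particular $\langle w_{K_s}(\chi_-),\lambda\rangle<0$ for all $2\le s\le n-2$: this holds for $s\ne r$ by hypothesis, and for $s=r$ by the choice of $\chi_-$. Theorem~\ref{thm:normality-criterion} then shows that $X_{\chi_-}$ is normal. Birationality and properness of $\nu$ come from Theorem~\ref{thm:peterson-vgit}(2), or from the same argument as for $f_M$ in the proof of Proposition~\ref{prop:middle-wall-nonnormal}, using the common dense open set $Y^0_{S,\emptyset}/G_0$.

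\emph{Fibers of $\nu$.} First, $(\mathrm{Pet}_n)^{ss}_{G_0}(\mathcal L(\chi_-))\subseteq(\mathrm{Pet}_n)^{ss}_{G_0}(\mathcal L(\chi))$. This follows as in \eqref{eq:ss-M-inclusion}: a stratum $Y^0_{I,J}$ that is $\chi_-$-semistable has pairings of the correct weak signs at $\chi$, because the only pairing that changes sign is the one indexed by $K_r$, and it becomes $0$ at $\chi$. Since $K_r$ is the only vanishing pairing at $\chi$, the only strictly semistable polystable point for $\chi$ is $p_{K_r}$; every other $\chi$-semistable point is $\chi$-stable. Hence, away from $q_{K_r}$, the quotient $X_\chi$ is the geometric quotient of the $\chi$-stable locus.

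Next we compare the two loci. A point $x\in Y^0_{I,J}$ is $\chi$-stable with $I\ne K_r$ and $J\neq K_r$. Since all other signs agree between $\chi$ and $\chi_-$, such a point is also $\chi_-$-stable. Conversely, a $\chi_-$-semistable point either has the same property or lies in $Y^0_{I,K_r}$. The strata with $I=K_r$ are excluded, because $\langle w_{K_r}(\chi_-),\lambda\rangle<0$. So $\nu$ is an isomorphism over $X_\chi\setminus\{q_{K_r}\}$. Over $q_{K_r}$, the argument leading to \eqref{eq:wall-fiber} applies verbatim with $\chi_M$ replaced by $\chi_-$. It gives $\nu^{-1}(q_{K_r})=Y^0_{S,K_r}/G_0$ set-theoretically, which has $N(n,r)$ points by Proposition~\ref{prop:maximal-stratum-orbits}. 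The $\chi_-$-semistable strata with $J=K_r$ are exactly $Y^0_{S,K_r}$, since $K_r$ is maximal and $I=K_r$ is excluded.

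\emph{Conclusion.} The morphism $\nu$ is proper with finite fibers, so it is quasi-finite and proper, hence finite. A finite birational morphism from a normal variety is the normalization. The fiber count over $q_{K_r}$ is the one just computed.

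\emph{Main obstacle.} The delicate step is checking that $\nu$ is an isomorphism (or at least quasi-finite) away from $q_{K_r}$. Strictly, finiteness needs only quasi-finiteness, and that follows because both quotients are geometric on the stable loci, which coincide over the complement. I would also verify that a $\chi$-stable point whose orbit closure in the $\chi_-$-locus differs cannot create extra identifications. This is handled by noting that the $\chi$-stable and $\chi_-$-stable loci agree off $\pi_\chi^{-1}(q_{K_r})$.
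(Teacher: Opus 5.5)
Your proposal is correct and follows essentially the same route as the paper. Normality of the source comes from Theorem~\ref{thm:normality-criterion}, and $\nu$ is an isomorphism away from $q_{K_r}$ because the stability loci agree there. The fiber over $q_{K_r}$ is identified with $Y^0_{S,K_r}/G_0$ as in \eqref{eq:wall-fiber} and counted by Proposition~\ref{prop:maximal-stratum-orbits}; then proper plus quasi-finite gives finite, and a finite birational morphism with normal source is the normalization. Your explicit checks that the $\chi_-$-semistable locus sits inside the $\chi$-semistable locus, and that the $\chi$-stable locus is the $\chi$-semistable locus minus $\pi_\chi^{-1}(q_{K_r})$, simply spell out what the paper asserts in one line.
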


\begin{proof}
Since \(\chi\) lies on no wall other than \(H_{K_r}\), the only
strictly semistable closed \(G_0\)-orbit for the
\(\chi\)-linearization is $\{p_{K_r}\}.$
Indeed, by the description of the polystable locus, a semistable
non-stable closed orbit is a fixed point \(p_I\) satisfying
\[
\langle w_I(\chi),\lambda\rangle=0.
\]
Choose a rational point in the adjacent chamber sufficiently close to
\(\chi\), and then multiply it by a positive integer so that it lies
in the root lattice. Denote the resulting root-lattice character by
\(\chi_-\). Then all numerical
weights which are nonzero at \(\chi\) retain their signs. In
particular,
\[
\langle w_{K_s}(\chi_-),\lambda\rangle<0
\qquad
(2\le s\le n-2).
\]
The variation of GIT gives a projective birational morphism
\[
\nu:
(\mathrm{Pet}_n)^{ss}_{G_0}(\mathcal L(\chi_-))
\sslash G_0
\longrightarrow
(\mathrm{Pet}_n)^{ss}_{G_0}(\mathcal L(\chi))
\sslash G_0.
\]

Away from the S-equivalence class of \(p_{K_r}\), all semistable
points for the wall linearization are stable and have the same
stability behavior for \(\chi_-\). Hence \(\nu\) restricts to an
isomorphism
\[
\nu^{-1}\!\left(
\left(
(\mathrm{Pet}_n)^{ss}_{G_0}(\mathcal L(\chi))
\sslash G_0
\right)\setminus\{q_{K_r}\}
\right)
\xrightarrow{\;\sim\;}
\left(
(\mathrm{Pet}_n)^{ss}_{G_0}(\mathcal L(\chi))
\sslash G_0
\right)\setminus\{q_{K_r}\}.
\]

The fiber over \(q_{K_r}\) is computed exactly as in
\eqref{eq:wall-fiber} gives $\nu^{-1}(q_{K_r})
=
Y^0_{S,K_r}/G_0.$
Then by Proposition~\ref{prop:maximal-stratum-orbits}, we get that 
\[
\#\bigl(Y^0_{S,K_r}/G_0\bigr)
=
N(n,r)
=
\frac1n
\sum_{d\mid\gcd(n,r)}
\varphi(d)
\binom{n/d}{r/d}.
\]
Thus every fiber of \(\nu\) is finite. Hence \(\nu\) is
quasi-finite. Since \(\nu\) is projective, it is finite.

By the choice of \(\chi_-\), we get $\langle w_{K_s}(\chi_-),\lambda\rangle<0$ for $2\le s\le n-2)$. So byTheorem~\ref{thm:normality-criterion} we get that $(\mathrm{Pet}_n)^{ss}_{G_0}(\mathcal L(\chi_-))
\sslash G_0$
is normal. The morphism \(\nu\) is therefore finite and birational
with normal source. Hence it is the normalization morphism of $(\mathrm{Pet}_n)^{ss}_{G_0}(\mathcal L(\chi))
\sslash G_0.$
\end{proof}

\section{Smoothness of the Quotient}
In contrast with normality, smoothness is independent of the GIT
chamber. We show that every Peterson GIT quotient is smooth in ranks
\(n\le3\), whereas for \(n\ge4\) every regular dominant
linearization produces a singular quotient. 

\begin{theorem}\label{thm:smoothness} Let $\chi$ be a regular dominant character of \(T\) lying in the root lattice. Then the quotient $(\mathrm{Pet}_n)^{ss}_{G_0}(\mathcal L(\chi))\sslash G_0$ is smooth if and only if \(n\le3\).
\end{theorem}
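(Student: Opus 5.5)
The proof splits into the cases $n\le 3$ and $n\ge 4$.

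For $n\le3$ the claim follows from earlier results. When $n=2$, $\mathrm{Pet}_2=G/B=\mathbb P^1$. The only regular dominant linearizations make $p_\emptyset$ and $p_S$ unstable and $Y^0_{S,\emptyset}$ stable, so the quotient is a point, which is smooth. When $n=3$, Remark~\ref{rem:deep-nonempty} gives $\mathcal C^\lambda_{\mathrm{deep}}=\mathcal D$. Example~\ref{ex:GL3-deep} then identifies every quotient with $\mathbb P(1,2)\cong\mathbb P^1$, which is smooth.

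For $n\ge4$, the plan is to work in the Peterson big cell, using the Toeplitz coordinates from the proof of Theorem~\ref{thm:deep-quotient}. For every $\chi\in\mathcal D$ and every $J$ with $\langle w_J(\chi),\lambda\rangle<0$, the strata $Y^0_{S,J}$ are stable. In particular this holds for $J=\emptyset$, and for $J=K_1,K_{n-1}$ whenever those pairings are negative. The big cell $\mathrm{Pet}_n\cap Bw_0B/B\cong\mathbb C^{n-1}$ is smooth, and on it $G_0$ acts with weights $-1,\dots,-(n-1)$.

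Next, take a point $x$ with $\mathbf a=(0,\dots,0,a_{n-2},0)$ and $a_{n-2}\neq0$. Its stabilizer is $\mu_{n-2}$, which is nontrivial for $n\ge4$. The steps are:
\begin{enumerate}
\item Show $x$ is stable by placing it in some stratum $Y^0_{S,J}$ with $\langle w_J(\chi),\lambda\rangle<0$. Its first row is $(0,a_{n-2},0,\dots,0,1)$, whose first nonzero entry sits in column $2$. By the Bruhat criterion quoted before Proposition~\ref{prop:maximal-stratum-orbits}, the first block of $J$ therefore has size $2$, so $J\subseteq S\setminus\{\alpha_2\}$ and $J$ does not contain $\alpha_1$.
\item The difficulty is that $J$ could still be a middle subset whose weight is positive for some chambers when $n\ge6$. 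The cleanest remedy is to choose $x$ adaptively. I would instead use points with only $a_1=0$, where $\mathbf a=(0,a_2,\dots,a_{n-1})$ is generic among such vectors with $a_{n-1}\ne0$. Then the first row is $(a_{n-1},\dots)$ with first entry nonzero, which forces the first block to have size $1$. The same argument applied to $u(\mathbf a)^{-1}$ should force $J=\emptyset$, giving a point of $Y^0_{S,\emptyset}$. Since $Y^0_{S,\emptyset}$ is stable for every $\chi$ (Remark~\ref{rem:quotients-birational}), stability is independent of the chamber.
\item This choice costs the nontrivial stabilizer, because a generic such $\mathbf a$ has trivial stabilizer. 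So I would rather take $\mathbf a=(0,a_2,0,a_4,\dots)$, supported on even indices. Its stabilizer contains $\mu_2$, and for $n\ge4$ it can be chosen with $b$-conditions making $J=\emptyset$. This requires $a_{n-1}$ or $a_{n-2}$ to be nonzero with the correct parity, and checking $J=\emptyset$ through the first rows of $u(\mathbf a)w_0$ and $u(\mathbf a)^{-1}w_0$. Concretely, I need some entry $a_j$ in the relevant last positions together with the corresponding $b$ entries to be nonzero. The main obstacle is reconciling these two requirements. I would test the explicit family $\mathbf a=(0,a,0,\dots)$, i.e.\ $u=I+aN^2$, with $u^{-1}=\sum(-a)^kN^{2k}$, and compute when the first-row conditions hold. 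Parity issues for even versus odd $n$ may force using $\mu_{2}$ in one case and $\mu_3$ in the other.
\end{enumerate}

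Once a smooth stable point $x$ with stabilizer $\mu_m$, $m\ge2$, is in hand, apply Luna's slice theorem (Remark~\ref{rem:geometric-quotient}). Near $\pi(x)$ the quotient is étale-locally $N_x/\mu_m$, where $N_x$ is the $(n-2)$-dimensional normal slice, on which $\mu_m$ acts by the weights of $T_x\mathbb C^{n-1}$ modulo the orbit direction. The quotient is smooth there if and only if $\mu_m$ is generated by pseudo-reflections (Chevalley–Shephard–Todd). For a cyclic group this means at most one weight is nonzero mod $m$. For $\mathbf a=(0,a,0,\dots)$ with $m=2$, the orbit direction has weight $2$, and the normal weights are $1,3,5,\dots$ (the odd indices) together with the even ones other than $2$. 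The odd weights are nonzero mod $2$, and there are at least two of them, namely $1$ and $3$, exactly when $n-1\ge3$, i.e.\ $n\ge4$. Hence $\mu_2$ does not act as a reflection group, and $\pi(x)$ is a singular point of the quotient for every regular dominant $\chi$. This completes the $n\ge4$ direction once stability of $x$ is verified.
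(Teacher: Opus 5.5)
Your $n\le 3$ case and your Luna/Chevalley--Shephard--Todd endgame are fine. The $n\ge4$ direction, however, has a genuine gap. You never produce a point that is both stable for \emph{every} regular dominant $\chi$ and has nontrivial stabilizer, and the witness you finally use fails.

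\emph{Why your final witness is not always stable.} For $\mathbf a=(0,a,0,\dots,0)$ the first row of $u(\mathbf a)w_0$ is $(0,\dots,0,a,0,1)$. Its first nonzero entry is in column $n-2$, so $x\in Y^0_{S,J}$ where the first block of $J$ is $\{1,\dots,n-2\}$. (Likewise, in your step 1 a first block of size $2$ means $\alpha_1\in J$, not $\alpha_1\notin J$.) Then $w_J(\epsilon_1)=\epsilon_{n-2}$, and $\langle w_J(\varpi_1),\lambda\rangle=(n-5)/2>0$ for $n\ge6$. Hence for $\chi=\sum m_j\varpi_j$ with $m_1\gg0$ the point $x$ is unstable. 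For $n=6$ one gets exactly $J=K_4$, and $H_{K_4}$ meets $\mathcal D$ by Lemma~\ref{lem:maximal-parabolic-walls}.

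\emph{Why step 3 cannot rescue this for even $n$.} In the big cell, $Y^0_{S,\emptyset}$ is the locus where all leading principal minors $\Delta_k$ of $u(\mathbf a)w_0$ are nonzero. Each $\Delta_k$ is weighted homogeneous of degree $k(n-k)$, with $\deg a_j=j$. So if $\mu_d$ stabilizes a point of $Y^0_{S,\emptyset}$, then $d\mid k(n-k)$ for all $k$, and in particular $d\mid\gcd(n-1,2(n-2))=\gcd(n-1,2)$. For even $n$, every point of $Y^0_{S,\emptyset}$ therefore has trivial stabilizer, and no parity choice of $\mu_2$ or $\mu_3$ can help. Separately, the first-row conditions on $u(\mathbf a)w_0$ and $u(\mathbf a)^{-1}w_0$ only fix the first and last blocks of $J$; they do not force $J=\emptyset$.

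\emph{The missing idea.} Put the single nonzero coordinate in the middle, not at index $2$ or $n-2$. The paper takes $r=\lfloor n/2\rfloor$, $m=n-r$, and $x$ with $a_m=1$ and $a_k=0$ otherwise.
\begin{itemize}
\item The first nonzero entry of the first row is then in column $r$; equivalently, the leading $r\times r$ block of $u(\mathbf a)w_0$ is antidiagonal. So $x\in Y^0_{S,J}$ with $J\subseteq K_r$.
\item Since $n-1\le 2r\le n+1$, Lemma~\ref{lem:maximal-parabolic-walls} together with $w_J\le w_{K_r}$ gives $\langle w_J(\chi),\lambda\rangle\le\langle w_{K_r}(\chi),\lambda\rangle<0$ for all $\chi\in\mathcal D$. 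Thus $x$ is stable in every chamber.
\item Its stabilizer is $\mu_m$ with $m=\lceil n/2\rceil\ge2$.
\end{itemize}
Your endgame then goes through. An element of order $2$ forces $m$ even, so $m\neq 3$, and it moves both $a_1$ and $a_3$. An element of order $>2$ moves both $a_1$ and $a_2$. Hence no nontrivial element is a pseudoreflection on $N_x$. For $n=4,5$ your step-1 point is exactly this one, which is why the failure only appears for $n\ge6$.
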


\begin{proof}
First suppose \(n\le3\). For \(n=2\), the regular dominant cone is
contained in the deep chamber, and by Theorem~\ref{thm:deep-quotient} we get that $(\mathrm{Pet}_2)^{ss}_{G_0}(\mathcal L(\chi))\sslash G_0$ is a point. For \(n=3\), the entire regular dominant cone is the
deep chamber, so
\[
(\mathrm{Pet}_3)^{ss}_{G_0}(\mathcal L(\chi))\sslash G_0
\cong
\mathbb P(1,2)\cong\mathbb P^1.
\]
Thus the quotient is smooth for \(n\le3\).

Now assume \(n\ge 4\). We show that the quotient is singular. Put
\[
r=\left\lfloor\frac n2\right\rfloor,
\qquad
m=n-r.
\]
We work in the Peterson big cell
\[
X_S^0=(Bw_0B/B)\cap \mathrm{Pet}_n\cong \mathbb A^{n-1},
\]
with standard coordinates $(a_1,\ldots,a_{n-1}).$
The \(\lambda\)-action is diagonal:
\[
\lambda(t)\cdot a_k=t^{-k}a_k.
\]
Consider the point \(x\in X_S^0\) corresponding to the
Peterson big-cell coordinates
\[
a_m=1,\qquad a_k=0\quad(k\neq m).
\]
Equivalently,
\[
x=uB/B,
\qquad
u=
\left(I+\sum_{i=1}^{r}E_{i,i+m}\right)w_0.
\]
Thus \(x\in X_S^0\subset \mathrm{Pet}_n\).

We claim that \(x\in Y^0_{S,I}\) for some $I\subseteq S\setminus\{\alpha_r\}.$ Since \(x\in X_S^0\), it lies in \(Y^0_{S,I}\) for a unique subset
\(I\subseteq S\). It remains to show that \(\alpha_r\notin I\). The leading \(r\times r\) principal block of \(u\) is the antidiagonal
permutation matrix.
Hence
\[
\Delta_r(u)=\pm 1\neq 0.
\]
By the Bruhat rank criterion for the opposite Schubert cell
\(B^-w_IB/B\), the rank of the leading \(r\times r\) block is
\[
\#\{1\leq a\leq r\mid w_I(a)\leq r\}.
\]
Since this rank is \(r\), we have
\[
w_I(\{1,\ldots,r\})=\{1,\ldots,r\}.
\]
So \(w_I\in W_{K_r}\), where $K_r=S\setminus\{\alpha_r\}.$
Since the support of the longest element \(w_I\) is exactly \(I\),
it follows that $I\subseteq K_r.$ Therefore $x\in Y^0_{S,I}$
for some \(I\subseteq S\setminus\{\alpha_r\}\).

Since \(I\subseteq K_r\), we have \(w_I\le w_{K_r}\) in Bruhat
order. As \(\chi\) is dominant and \(-\lambda\) is strictly dominant, by Lemma~\ref{lem:maximal-parabolic-walls}, we have
\[
\langle w_I(\chi),\lambda\rangle
\le
\langle w_{K_r}(\chi),\lambda\rangle
<0.
\]
Also,
\[
\bigl\langle w_S(\chi),\lambda\bigr\rangle
=
\bigl\langle w_0(\chi),\lambda\bigr\rangle>0.
\]
Therefore, by the stability criterion for the strata \(Y^0_{S,I}\), $
x\in (\mathrm{Pet}_n)^s_{G_0}(\mathcal L(\chi)).$

Moreover,
\[
\lambda(t)\cdot x=x
\quad\Longleftrightarrow\quad
t^{-m}=1.
\]
Thus $
\operatorname{Stab}_{G_0}(x)=\mu_m.$ Let
\[
\pi:
(\mathrm{Pet}_n)^{ss}_{G_0}(\mathcal L(\chi))
\longrightarrow
(\mathrm{Pet}_n)^{ss}_{G_0}(\mathcal L(\chi))\sslash G_0
\]
be the quotient morphism. Since \(x\) is stable, its \(G_0\)-orbit is
closed in the semistable locus. Moreover, $x\in X_S^0\subseteq\mathrm{Pet}_n^{\mathrm{sm}},$
so \(x\) is a smooth point of \(\mathrm{Pet}_n\), and $\operatorname{Stab}_{G_0}(x)=\mu_m.$

We choose an affine open neighbourhood \(Q\) of \(\pi(x)\) in the
quotient and set $W:=\pi^{-1}(Q).$
Since \(\pi\) is a good quotient, \(W\) is a \(G_0\)-invariant affine
open neighbourhood of \(G_0\cdot x\). We may therefore apply Luna's
\'etale slice theorem \cite{Luna} at the closed orbit \(G_0\cdot x\).
Since \(x\) is smooth, the quotient is \'etale-locally at \(\pi(x)\)
modeled on
\[
N_x\sslash\mu_m,
\qquad
N_x=
T_x\mathrm{Pet}_n/T_x(G_0\cdot x).
\]
Consequently, if the GIT quotient were smooth at \(\pi(x)\), then
\(N_x\sslash\mu_m\) would be smooth at the image of the origin.

We now describe the $\mu_m$-representation $N_x$. 
In the Peterson big-cell coordinates $(a_1,\ldots,a_{n-1}),$
the action is
\[
\lambda(t)\cdot(a_1,\ldots,a_{n-1})
=
(t^{-1}a_1,\ldots,t^{-(n-1)}a_{n-1}).
\]
At the point \(x\), only \(a_m\) is nonzero. Hence the tangent space
to the \(G_0\)-orbit is the \(a_m\)-direction, and therefore
\[
N_x
\cong
\bigoplus_{\substack{1\le k\le n-1\\k\neq m}}
\mathbb C a_k.
\]
For \(t\in\mu_m\), the induced action is
\[
t\cdot a_k=t^{-k}a_k.
\]
We claim that no nontrivial element of \(\mu_m\) acts as a
pseudoreflection on \(N_x\). Let \(\zeta\in\mu_m\) be nontrivial.
If \(\zeta\) has order \(2\), then \(\zeta\) acts nontrivially on both
\(a_1\) and \(a_3\). If \(\zeta\) has order greater than \(2\), then
\(\zeta\) acts nontrivially on both \(a_1\) and \(a_2\). Thus every
nonidentity element acts nontrivially in at least two independent
directions, and hence is not a pseudoreflection.

By the Chevalley--Shephard-Todd theorem (see \cite[Chapter~18, \S18-1]{Kane}), $
N_x\sslash \mu_m$
is singular at the image of the origin. Therefore the GIT quotient is singular at \(\pi(x)\). Hence for every \(n\ge 4\), $(\mathrm{Pet}_n)^{ss}_{G_0}(\mathcal L(\chi))\sslash G_0$
is not smooth. So the theorem follows.
\end{proof}

\section{Examples and Further Consequences}

We conclude by illustrating the chamber structure in rank four and by
recording some additional geometric properties of the deep-chamber
quotient.

\subsection{A wall crossing for \(\mathrm{Pet}_4\)}
We exhibit a chamber adjacent to the deep chamber whose quotient
remains normal but is not isomorphic to the deep-chamber weighted
projective space.

\begin{example}\label{examp:not-weighted}
Let $G=\mathrm{GL}(4,\mathbb C)$ and let $\chi=4\epsilon_1+3\epsilon_2+2\epsilon_3-9\epsilon_4$. We show
that the corresponding GIT quotient is normal but is not isomorphic
to \(\mathbb P(1,2,3)\).

We set $I:=\{\alpha_2,\alpha_3\}=K_1.$ As computed in Example~\ref{exam:not-in-deep}, $\langle w_I(\chi),\lambda\rangle=4>0,$
whereas $\langle w_J(\chi),\lambda\rangle<0$
for every proper subset \(J\subsetneq S\) with \(J\neq I\).
In particular, $\chi\notin\mathcal C^\lambda_{\mathrm{deep}}.$

We set
\[
X_\chi
:=
(\mathrm{Pet}_4)^{ss}_{G_0}(\mathcal L(\chi))
\sslash G_0.
\]
Choose a root-lattice character
\(\chi_{\mathrm{deep}}\in\mathcal C^\lambda_{\mathrm{deep}}\), and put
\[
X_{\mathrm{deep}}
:=
(\mathrm{Pet}_4)^{ss}_{G_0}
(\mathcal L(\chi_{\mathrm{deep}}))
\sslash G_0.
\]
By Theorem~\ref{thm:deep-quotient}, we have $X_{\mathrm{deep}}\cong\mathbb P(1,2,3).$

The semistability criterion shows that, compared with the deep
chamber, the only proper subset whose sign changes is \(I\).
Consequently the deep-chamber semistable locus loses the stratum
\(Y^0_{S,I}\), while the \(\chi\)-semistable locus gains
\[
X_I^0\setminus\{p_I\}
=
\bigsqcup_{J\subsetneq I}Y^0_{I,J}.
\]
Thus there is a common \(G_0\)-invariant open subset \(U\) such that
\[
(\mathrm{Pet}_4)^{ss}_{G_0}
(\mathcal L(\chi_{\mathrm{deep}}))
=
U\sqcup Y^0_{S,I},
\]
whereas
\[
(\mathrm{Pet}_4)^{ss}_{G_0}(\mathcal L(\chi))
=
U\sqcup\bigl(X_I^0\setminus\{p_I\}\bigr).
\]

Since \(I=K_1\), Proposition~\ref{prop:maximal-stratum-orbits} gives
\[
Y^0_{S,I}/G_0
\cong
\binom{\mu_4}{1}/\mu_4,
\]
which consists of a single point. Hence \(Y^0_{S,I}\) is a single
\(G_0\)-orbit. Let $q_0\in X_{\mathrm{deep}}$
denote its image. Then
\begin{equation}\label{eq:example-common-open-deep}
X_{\mathrm{deep}}\setminus\{q_0\}
\cong
U/G_0.
\end{equation}

We next consider the new semistable piece. Solving the Peterson
equations on the Schubert cell \(Bw_IB/B\) gives
\[
X_I^0
=
\left\{
g(b_1,b_2)B/B:
(b_1,b_2)\in\mathbb C^2
\right\},
\]
where
\[
g(b_1,b_2)
=
\begin{pmatrix}
1&0&0&0\\
0&b_2&b_1&1\\
0&b_1&1&0\\
0&1&0&0
\end{pmatrix}.
\]
Thus $X_I^0\simeq\mathbb A^2$ and $p_I=g(0,0)B/B.$
The \(G_0\)-action is
\[
\lambda(t)\cdot(b_1,b_2)
=
(t^{-1}b_1,t^{-2}b_2).
\]
Therefore
\[
C
:=
\bigl(X_I^0\setminus\{p_I\}\bigr)/G_0
\cong
\mathbb P(1,2)
\cong
\mathbb P^1.
\]

We claim that \(X_I^0\setminus\{p_I\}\) is closed in the
\(\chi\)-semistable locus. Indeed,
\[
\overline{X_I^0}
=
\bigsqcup_{J\subseteq I}X_J^0.
\]
The point \(p_I\) is unstable since $\langle w_I(\chi),\lambda\rangle>0,$ and, for every \(J\subsetneq I\), $
\langle w_J(\chi),\lambda\rangle<0.$
Hence no point in a boundary cell \(X_J^0\), \(J\subsetneq I\), is
semistable with first index \(J\). Thus
\[
\overline{X_I^0}\cap
(\mathrm{Pet}_4)^{ss}_{G_0}(\mathcal L(\chi))
=
X_I^0\setminus\{p_I\}.
\]
Since the quotient morphism is a good quotient, \(C\) is therefore a
closed irreducible curve in \(X_\chi\). Moreover,
\begin{equation}\label{eq:example-common-open}
X_\chi\setminus C
\cong
U/G_0
\cong
X_{\mathrm{deep}}\setminus\{q_0\}.
\end{equation}

We first note that \(X_\chi\) is normal. For \(n=4\), the only middle
index is \(r=2\), and a direct computation gives
\[
\langle w_{K_2}(\chi),\lambda\rangle=-8<0.
\]
Hence Theorem~\ref{thm:normality-criterion} implies that \(X_\chi\)
is normal.

We now show that $X_\chi\not\cong\mathbb P(1,2,3).$
Both \(X_\chi\) and \(X_{\mathrm{deep}}\) are normal surfaces.
Since \(C\simeq\mathbb P^1\) is an irreducible closed curve,
\(C\) is a prime divisor on \(X_\chi\), whereas \(q_0\) has
codimension \(2\) in \(X_{\mathrm{deep}}\). Hence
\[
\operatorname{Cl}(X_{\mathrm{deep}})
\cong
\operatorname{Cl}
\bigl(X_{\mathrm{deep}}\setminus\{q_0\}\bigr).
\]
Since $X_{\mathrm{deep}}\cong\mathbb P(1,2,3),$ we have
\[
\operatorname{Cl}
\bigl(X_{\mathrm{deep}}\setminus\{q_0\}\bigr)
\cong
\operatorname{Cl}(\mathbb P(1,2,3))
\cong\mathbb Z.
\]
By \eqref{eq:example-common-open},
\[
\operatorname{Cl}(X_\chi\setminus C)\cong\mathbb Z.
\]

The localization sequence for divisor class groups gives
\[
\mathbb Z[C]
\longrightarrow
\operatorname{Cl}(X_\chi)
\longrightarrow
\operatorname{Cl}(X_\chi\setminus C)
\longrightarrow0.
\]
We claim that the class \([C]\) has infinite order. Suppose that $m[C]=0$
for some \(m>0\). Indeed, if \(mC=\operatorname{div}(f)\)
for some \(m>0\), then \(f\) has no zeros or poles on \(X_\chi\setminus C\).
Via the isomorphism
\[
X_\chi\setminus C\cong X_{\mathrm{deep}}\setminus\{q_0\},
\]
the function \(f\) gives an invertible regular function on
\(X_{\mathrm{deep}}\setminus\{q_0\}\). Since \(X_{\mathrm{deep}}\) is normal and \(q_0\) has codimension \(2\),
the invertible regular function extends uniquely to an invertible
regular function on \(X_{\mathrm{deep}}\). Since
\(X_{\mathrm{deep}}\) is projective and irreducible, every global
invertible regular function is constant. Hence \(f\) is constant on
the dense open subset \(X_\chi\setminus C\), and therefore constant
as a rational function on \(X_\chi\), contradicting
\[
\operatorname{div}(f)=mC\neq0.
\]
Therefore \([C]\) has infinite order.

Consequently,
\[
0
\longrightarrow
\mathbb Z[C]
\longrightarrow
\operatorname{Cl}(X_\chi)
\longrightarrow
\mathbb Z
\longrightarrow0
\]
is exact. Since \(\mathbb Z\) is free, this sequence splits, and
hence $\operatorname{Cl}(X_\chi)
\cong
\mathbb Z^2.$
On the other hand,
\[
\operatorname{Cl}(\mathbb P(1,2,3))
\cong\mathbb Z.
\]
Therefore $X_\chi\not\cong\mathbb P(1,2,3).$
\end{example}

\subsection{Properties of the deep-chamber quotient}
Since the deep-chamber quotient is explicitly a weighted projective
space, its normality, smoothness, Cohen-Macaulayness, and
Gorensteinness can be determined directly from the weights.

\begin{remark}
For $\chi\in\mathcal C_{\mathrm{deep}}^\lambda$
a regular dominant character in the root lattice,
Theorem~\ref{thm:deep-quotient} gives
\[
(\mathrm{Pet}_n)^{ss}_{G_0}(\mathcal L(\chi))
\sslash G_0
\cong
\mathcal W_n
:=
\mathbb P(1,2,\ldots,n-1).
\]
\begin{enumerate}
\item[(i)] \textbf{Normality and Cohen--Macaulayness:}  
$\mathcal W_n$ is a toric variety, hence it is normal and Cohen--Macaulay for every $n \ge 2$.

\item[(ii)] \textbf{Smoothness:}  $\mathcal W_n$ is smooth if and only if $n\le3$. For $n\ge4$ the point
$p_2=[0:1:0:\cdots:0]$ has stabiliser $\mu_2$, acting on the slice
$\bigoplus_{k\neq2}\mathbb C a_k$ by $\zeta\cdot a_k=\zeta^{-k}a_k$; it acts by $-1$ on
every odd coordinate, and since $n-1\ge3$ there are at least two of them ($a_1,a_3$).
Hence $\mu_2$ is not generated by pseudo-reflections and the local quotient is singular.

\item[(iii)] \textbf{Gorensteinness:}
$\mathcal W_n$ is Gorenstein if and only if $n\le 4$. For $n=2,3$ this is clear, since
$\mathcal W_{2}$ is a point and $\mathcal W_{3}=\mathbb P(1,2)\cong\mathbb P^{1}$.

Let $n\ge4$. Then $\mathcal W_{n}$ is well-formed: for each weight $i\in\{1,\dots,n-1\}$ the
remaining weights have greatest common divisor $1$, because the weight $1$ survives when
$i\neq1$, and the coprime weights $2$ and $3$ survive when $i=1$. For a well-formed weighted
projective space, Gorensteinness is equivalent to the condition that every weight divides
$S=\sum_{k=1}^{n-1}k=\frac{n(n-1)}{2}$ (see \cite{Watanabe}). For $n=4$ the weights $1,2,3$ all
divide $S=6$, so $\mathcal W_{4}$ is Gorenstein. For $n\ge5$:
\begin{itemize}
\item if $n$ is odd, then $n-1$ is a weight and $S/(n-1)=n/2\notin\mathbb Z$;
\item if $n$ is even, then $n-2$ is a weight and $n-2\mid S$ would give $n-2\mid 2S=n(n-1)$;
since $n\equiv2$ and $n-1\equiv1 \pmod{n-2}$, this forces $n-2\mid 2$, i.e. $n\le4$.
\end{itemize}
Hence $\mathcal W_{n}$ is not Gorenstein for $n\ge5$.
\end{enumerate}

Therefore, the deep chamber quotient, which is isomorphic to $\mathcal W_n$ is always normal and Cohen-Macaulay; it is smooth only for $n\le3$; and it is Gorenstein only for $n\le4$.
\end{remark}

\begin{remark}
For \(n\ge4\), every Peterson GIT quotient considered above is
singular, independently of the GIT chamber. In the deep chamber,
\[
(\mathrm{Pet}_n)^{ss}_{G_0}(\mathcal L(\chi))
\sslash G_0 \cong\mathbb P(1,2,\ldots,n-1)
\]
is normal and singular. Example~\ref{examp:not-weighted} shows that
crossing a wall may preserve normality while increasing the rank of
the divisor class group and destroying the weighted-projective-space
structure.
\end{remark}

\end{document}